\theoremstyle{plain}
\newtheorem{theorem}{Theorem}[section]
\newtheorem{lemma}[theorem]{Lemma}
\newtheorem{proposition}[theorem]{Proposition}
\theoremstyle{definition}
\theoremstyle{remark}
\newtheorem{remark}{Remark}
\newcommand{\green}[1]{{\color{green}{#1}}} 
\newcommand{\orange}[1]{{\color{orange}{#1}}} 
\newcommand{\Norm}[1]{{\left\|{#1} \right\|}}
\newcommand{\SemiNorm}[1]{{\left|{#1} \right|}}
\newcommand{\jump}[1]{\left[\!\left[#1\right]\!\right]}
\newcommand{\average}[1]{\left\{#1\right\}}
\newcommand{\Normthreebars}[1]{\vvvert{#1}\vvvert_n}
\newcommand{\NormthreebarsDG}[1]{\vvvert{#1}\vvvert_{\text{DG}}}
\newcommand{\Normthreebarshp}[1]{\vvvert{#1}\vvvert_{n}}
\newcommand{\Pbb}{\mathbb{P}}
\newcommand{\card}{\operatorname{card}}
\newcommand{\mesh}{\mathcal{T}_h}
\newcommand{\facets}{\mathcal{F}}
\newcommand{\zOmega}{{0,\Omega}}
\newcommand{\normal}[1]{\mathbf{n}_{#1}}
\newcommand{\dx}[1]{\,\mathrm{d}{#1}}
\newcommand{\mis}[1]{{\left|{#1} \right|}}
\DeclareMathOperator{\Span}{span}
\newcommand{\Rbb}{\mathbb{R}}
\newcommand{\Pbbtaun}{\Pbb_\p(\taun)}
\newcommand{\Pbbpmotaun}{\Pbb_{\p-1}(\taun)}
\newcommand{\antildeparent}[2]{\widetilde{a}_n(#1,#2)}
\newcommand{\atilden}{\widetilde{a}_n}
\newcommand{\anparent}[2]{a_n(#1,#2)}
\newcommand{\an}{a_n}
\newcommand{\aparent}[2]{a(#1,#2)}
\newcommand{\gradh}{\nabla_\h}
\newcommand{\hOmega}{\h_\Omega}
\newcommand{\taun}{\mathcal T_\h}
\newcommand{\taunn}{\mathcal T_n}
\newcommand{\taunnpo}{\mathcal T_{n+1}}
\newcommand{\Fcaln}{\mathcal F_\h}
\newcommand{\Vcaln}{\mathcal V_\h}
\newcommand{\FcalnI}{\Fcaln^I}
\newcommand{\FcalnB}{\Fcaln^B}
\newcommand{\VcalnI}{\Vcaln^I}
\newcommand{\VcalnB}{\Vcaln^B}
\newcommand{\E}{K}
\newcommand{\F}{F}
\newcommand{\Xn}{X_n}
\newcommand{\Bn}{\Phi_n}
\newcommand{\nbf}{\mathbf n}
\newcommand{\nbfE}{\nbf_\E}
\newcommand{\nbfF}{\nbf_\F}
\newcommand{\taubold}{\boldsymbol{\tau}}
\newcommand{\tauboldE}{\taubold_\E}
\newcommand{\hE}{\h_\E}
\newcommand{\hF}{\h_\F}
\newcommand{\FcalE}{\mathcal F^\E}
\newcommand{\VcalE}{\mathcal V^\E}
\newcommand{\nablah}{\nabla_\h}
\newcommand{\qp}{q_\p}
\newcommand{\qpmoF}{q_{\p-1}^\F}
\newcommand{\Ihat}{\widehat I}
\newcommand{\Vn}{V_n}
\newcommand{\Vng}{V_n^g}
\newcommand{\Vnzero}{V_n^0}
\newcommand{\Vnimtilde}{\widetilde{V}_n^{\rm im}}
\newcommand{\Vnextilde}{\widetilde{V}_n^{\rm ex}}
\newcommand{\Qn}{\widetilde Q_n}
\newcommand{\PbbB}{\Pbb_\p^B(\mesh) }
\newcommand{\Vnex}{\Vn^{\rm ex}}
\newcommand{\Vnim}{\Vn^{\rm im}}
\newcommand{\vn}{v_n}
\newcommand{\wn}{w_n}
\newcommand{\un}{u_n}
\newcommand{\numedi}{N^I_\F}
\newcommand{\numedt}{N^T_\F}
\newcommand{\numel}{N_\E}
\newcommand{\numver}{N_V}
\newcommand{\nuhat}{\widehat \nu}
\newcommand{\Fhati}{\widehat \F_i}
\newcommand{\phijF}{\varphi_j^\F}
\newcommand{\phihatjFhati}{\widehat\varphi_j^{\Fhati}}
\newcommand{\phihatjVhati}[1]{{\widehat\varphi^{\nuhat_#1}}}
\newcommand{\phiellE}{\varphi_\ell^\E}
\newcommand{\shapereg}{\rho}
\newcommand{\trace}{\gamma_\Omega}
\newcommand{\traceEo}{\gamma_{\E_1}}
\newcommand{\traceEt}{\gamma_{\E_2}}
\newcommand{\traceE}{\gamma_{\E}}
\newcommand{\hoOm}{_{1,\h; \Omega}}
\newcommand{\PiltwoF}[1]{\Pi^{0,\F}_{#1}}
\newcommand{\PiLpmoF}{\Pi_{\Lpmo^\F}}
\newcommand{\omegaF}{\omega_\F}
\newcommand{\lambdaEi}{\lambda_{\E,i}}
\newcommand{\lambdaEF}{\lambda_{\E,\F}}
\newcommand{\lambdahatFhati}[1]{\widehat\lambda_{\widehat\F_{#1}}}
\newcommand{\bpF}{b_\p^\F}
\newcommand{\bpmoF}{b_{\p-1}^\F}
\newcommand{\Lp}{L_\p}
\newcommand{\Lj}{L_j}
\newcommand{\LjF}{\Lj^\F}
\newcommand{\Lpmo}{L_{\p-1}}
\newcommand{\Hone}{H^1(\Omega)}
\newcommand{\Honezd}{[H^1_0(\Omega)]^2}
\newcommand{\Ltwod}{[L^2(\Omega)]^2}
\newcommand{\Ltwoz}{L^2_0(\Omega)}
\newcommand{\qtw}{q_2}
\newcommand{\bpE}{b_\p^\E}
\newcommand{\Hsnctaunom}[2]{H^{#1,nc}_{#2} (\taun,\Omega)}
\newcommand{\Hsncgtaunom}[2]{H^{#1,nc}_{#2,g} (\taun,\Omega)}
\newcommand{\Htildesnctaunom}[2]{\widetilde H^{#1,nc}_{#2} (\taun,\Omega)}
\newcommand{\Htildesncgtaunom}[2]{\widetilde H^{#1,nc}_{#2,g} (\taun,\Omega)}
\newcommand{\weakder}{D^{\boldalpha}}
\newcommand{\FcalEhat}{\mathcal{F}^{\widehat \E}}
\newcommand{\Lebp}{L^\p(\Omega)}
\newcommand{\boldalpha}{\boldsymbol \alpha}
\newcommand{\Ehat}{\widehat \E}
\newcommand{\Fhat}{\widehat \F}
\newcommand{\Vtilden}{\widetilde V_n}
\newcommand{\Vbftilden}{\widetilde {\bf V}_n}
\newcommand{\Vtildeng}{\widetilde V_n^g}
\newcommand{\Vtildenexg}{\widetilde V_n^{ex,g}}
\newcommand{\Vtildenzero}{\widetilde V_n^0}
\newcommand{\vI}{v_I}
\newcommand{\boldbeta}{\boldsymbol \beta}
\newcommand{\mboldbetaE}{m_{\boldbeta}^\E}
\newcommand{\qpEo}{q_{\p}^{\E_1}}
\newcommand{\qpEtw}{q_{\p}^{\E_2}}
\newcommand{\qpEi}{q_{\p}^{\E_i}}
\newcommand{\qzEo}{q_{0}^{\E_1}}
\newcommand{\qzEtw}{q_{0}^{\E_2}}
\newcommand{\qzEi}{q_{0}^{\E_i}}
\newcommand{\utilden}{\widetilde u_n}
\newcommand{\vtilden}{\widetilde v_n}
\newcommand{\qtilden}{\widetilde q_n}
\newcommand{\stilden}{\widetilde s_n}
\newcommand{\Qtilden}{\widetilde Q_n}
\newcommand{\Bcaltilden}{\widetilde{\mathcal B}_n}
\newcommand{\btilden}{\widetilde{b}_{n}}
\newcommand{\ctilden}{\widetilde{c}_{n}}
\newcommand{\bbftilden}{\widetilde{\bf b}_{n}}
\newcommand{\cbftilden}{\widetilde{\bf c}_{n}}
\newcommand{\lambdatilden}{\widetilde{\lambda}_{n}}
\newcommand{\mutilden}{\widetilde{\mu}_{n}}
\newcommand{\Eo}{\E_1}
\newcommand{\Et}{\E_2}
\newcommand{\Ei}{\E_i}
\newcommand{\vtilde}{\widetilde v}
\newcommand{\btildeoF}{\widetilde b_1^\F}
\newcommand{\bptildeoF}{b_{\ptildeEo}^{\F}}
\newcommand{\bptildetF}{b_{\ptildeEt}^{\F}}
\newcommand{\bptildeotF}{b_{\ptildeEo,\ptildeEt}^{\F}}
\newcommand{\ub}{\textbf{u}}
\newcommand{\vb}{\textbf{v}}
\newcommand{\fb}{\textbf{f}}
\newcommand{\zb}{\mathbf{0}}
\newcommand{\vbtilden}{\widetilde \vb_n}
\newcommand{\ubtilden}{\widetilde \ub_n}
\newcommand{\Deltab}{\bm{\Delta}}
\newcommand{\nablab}{\bm{\nabla}}
\newcommand{\Vb}{\textbf{V}}
\newcommand{\dive}{\operatorname{div}}
\newcommand{\diveh}{\operatorname{div}_\h}
\newcommand{\pE}{\p_\E}
\newcommand{\pEpo}{\p_{\E+1}}
\newcommand{\hpE}{h^{\pE}_{\E}}
\newcommand{\pEo}{\p_{\Eo}}
\newcommand{\pEt}{\p_{\Et}}
\newcommand{\ptildeE}{\ptilde_\E}
\newcommand{\ptildeEo}{\ptilde_{\Eo}}
\newcommand{\ptildeEt}{\ptilde_{\Et}}
\newcommand{\pF}{\p_\F}
\newcommand{\Ej}{\E_j}
\newcommand{\ptilde}{\widetilde \p}
\newcommand{\bptildeEF}{b^\F_{\ptildeE}}
\newcommand{\pbf}{\mathbf p}
\newcommand{\PiLo}{\Pi^{0,\F}_{L^\F_1}}
\newcommand{\PitildeLo}{\widetilde\Pi^{0,\F}_{L^\F_1}}
\newcommand{\DGp}{DG_\p}
\newcommand{\EDGp}{E_{\DGp}}
\newcommand{\CRp}{CR_\p}
\newcommand{\ECRp}{E_{\CRp}}
\newcommand{\tildeCRp}{\widetilde{CR}_\p}
\newcommand{\EtildeCRp}{E_{\tildeCRp}}
\newcommand{\uDG}{u_{\text{DG}}}
\newcommand{\vDG}{v_{\text{DG}}}
\newcommand{\etaDG}{\eta_{\text{DG}}}
\newcommand{\etaF}{\eta_\F}
\newcommand{\Abfn}{\mathbf A_n}
\newcommand{\Bbfn}{\mathbf B_n}
\newcommand{\Cbfn}{\mathbf C_n}
\newcommand{\Dbfn}{\mathbf D_n}
\newcommand{\Bbfnstar}{\Bbfn^*}
\newcommand{\Cbfnstar}{\Cbfn^*}
\newcommand{\Dbfnstar}{\Dbfn^*}
\newcommand{\zerobf}{\mathbf 0}
\newcommand{\ubfn}{\mathbf u_n}
\newcommand{\bbfn}{\mathbf b_n}
\newcommand{\sbfn}{\mathbf s_n}
\newcommand{\fbfn}{\mathbf f_n}
\newcommand{\Lcali}{\mathcal L^i}
\newcommand{\Lcalk}{\mathcal L^k}
\newcommand{\Lcalimo}{\mathcal L^{i-1}}
\newcommand{\Lcalz}{\mathcal{L}^0}
\newcommand{\taunzn}{\mathcal T_0}
\newcommand{\taunon}{\mathcal T_1}
\newcommand{\tauntw}{\mathcal T_2}
\newcommand{\EQtildeCRp}{\EtildeCRp^Q}
\newcommand{\EVtildeCRp}{\EtildeCRp^V}
\newcommand{\p}{p}
\newcommand{\h}{h}
\title{\footnotesize{New Crouzeix-Raviart elements
of even degree: theoretical aspects,
numerical performance,
and applications to the Stokes' equations}}
\author{\scriptsize A. Bressan\thanks{CNR-IMATI ``E. Magenes'',
Via Ferrata 5, 27100 Pavia, Italy {\tt andrea.bressan@imati.cnr.it}},
L. Mascotto\thanks{Dipartimento di Matematica e Applicazioni, Universit\`a di Milano-Bicocca, 20125 Milan, Italy, {\tt lorenzo.mascotto@unimib.it};
IMATI-CNR, 27100, Pavia, Italy;
Fakult\"at f\"ur Mathematik, Universit\"at Wien, 1090 Vienna, Austria, {\tt lorenzo.mascotto@univie.ac.at}},
M. Mosconi\thanks{Dipartimento di Matematica e Applicazioni, Universit\`a di Milano-Bicocca, 20125 Milan, Italy, {\tt m.mosconi@campus.unimib.it}}}
\date{}
\begin{document}

\maketitle

\begin{abstract}
\noindent 
We construct new Crouzeix-Raviart (CR) spaces of even degree~$\p$
in two dimensions
that are spanned by basis functions mimicking those for the odd degree case.
Compared to the standard CR gospel,
the present construction
allows for the use of nested
bases of increasing degree
and is particularly suited to design variable order CR methods.
We analyze a nonconforming discretization
of a two dimensional Poisson problem,
which requires a DG-type stabilization;
the employed stabilization parameter
is considerably smaller
than that needed in DG methods.
Numerical results are presented, which exhibit
the expected convergence rates
for the $\h$-, $\p$-, and $\h\p$-versions of the scheme.
We further investigate numerically
the behaviour of new even degree CR-type
discretizations of the Stokes' equations.

\medskip\noindent
\textbf{AMS subject classification}:
 65M12; 65M15.

\medskip\noindent
\textbf{Keywords}: Crouzeix-Raviart element;
hierarchical basis;
DG stabilization;
$\h\p$-method;
Stokes' problem.
\end{abstract}

\section{Introduction} \label{section:introduction}

\paragraph*{State-of-the-art: the finite element framework.}
The lowest-order Crouzeix--Raviart (CR) element
was introduced in 1973 in~\cite{Crouzeix-Raviart:1973}
as an elementwise divergence free discretization
of the Stokes' equations
for space dimensions $d=2,3$;
see also \cite{Raviart-Thomas:1977}.
Fortin and Soulie~\cite{Fortin-Soulie:1983}
constructed explicit piecewise quadratic CR elements in 2D;
they showed that the second-order CR global space
is a standard conforming piecewise quadratic space
enriched with elemental nonconforming bubbles.
A CR scheme
with cubic approximation properties in 2D
was detailed in \cite[Example~5]{Crouzeix-Raviart:1973}
by augmenting the space with quartic bubble functions;
Crouzeix and Falk~\cite{Crouzeix-Falk:1989}
later introduced an optimal CR cubic element,
which did not require such bubbles.
Cha, Lee, and Lee~\cite{Cha-Lee-Lee:2000}
constructed quartic CR elements in 2D,
where fourth order nonconforming bubbles were employed,
with properties similar to those required by
Fortin and Soulie in the quadratic case.
In the subsequent years, the interest in developing higher (and general)
order CR elements raised a lot;
we refer to~\cite{Stoyan-Baran:2006,Carstensen-Sauter:2.2022,Ciarlet-Ciarlet-Sauter-Simian:2016}
for (essentially up to date) developments on this topic.

In the literature,
CR-type nonconforming global spaces
have been defined
\begin{itemize}
    \item \emph{implicitly}, as spaces of piecewise polynomials
    satisfying certain ``no-jump'' conditions at the interface
    between elements;
    \item \emph{explicitly}, as the span of the union of standard conforming finite element functions
    and additional nonconforming bubble functions.
\end{itemize}
While the former version is particularly useful on the theoretical level,
the latter appears to be very convenient in the implementation of the scheme.

It was clear since the very inception of the CR method \cite{Crouzeix-Raviart:1973}, see also \cite{Fortin-Soulie:1983},
that the \emph{explicit} definition
involves different types of basis functions
depending on the order of the scheme:
\begin{itemize}
    \item odd order CR elements are spanned by modal functions, yet
    removing the standard hat functions associated with the vertices of the mesh,
    and edge nonconforming bubbles;
    see Section~\ref{subsubsection:CR-standard-odd} below;
    \item even degree CR elements are spanned by modal functions
    and elemental nonconforming bubbles;
    see Section~\ref{subsubsection:CR-standard-even} below.
\end{itemize}
Apparently, this difference leads to some limitations:
\begin{itemize}
    \item in view of the $\p$-version of the scheme,
    the corresponding basis functions have not a hierarchical structure;
    \item variable order CR schemes does not contain
    the space of piecewise affine functions,
    cf. Subsection~\ref{subsection:variable-obvious-cannot-work}.
\end{itemize}

\paragraph*{State-of-the-art: beyond finite elements.} \label{paragraph:other-references}
More recent variants of CR-type elements,
which on occasion (typically on triangular meshes and
for low order of accuracy) coincide with CR elements,
are given by the nonconforming virtual element method
\cite{Beirao-Brezzi-Marini-Russo:2023, Ayuso-Manzini-Lipnikov:2016, Brezzi-Marini:2021}
and the hybrid high-order method \cite{DiPietro-Ern-Lemaire:2014, DiPietro-Ern-Lemaire:2016}.


\paragraph*{Goals of the paper.} \label{paragraph:goals}
In this work, we aim at facing the above issues.
In particular, we design novel even degree CR spaces
based on spanning sets of basis functions similar to those
employed in standard CR elements with odd degree.
This new approach
\begin{itemize}
    \item gives a hierarchical structure to the resulting $\p$-version CR method;
    \item allows for a simple and optimal design of variable order CR elements.
\end{itemize}
A moderate price to pay for these advantages
is that the standard broken
grad-grad bilinear form must be corrected
with DG-type terms in order to retain error estimates of the expected order;
this correction is not needed for the
well-posedness of the method
and, compared to its DG counterpart~\cite{DiPietro-Ern:2012},
has a much more moderate effect on the performance of the scheme,
as we shall investigate in practice.

As an interesting per se matter, we shall also check
the behaviour of new even degree CR-type discretizations
of the Stokes' equations on the numerical level;
this aspect is very much related to the recent
works \cite{Sauter:2023, Sauter-Torres:2023}.

\paragraph*{Outline of the remainder of this section.}
After recalling some functional and finite element tools
in Sections~\ref{subsection:functional-setting}
and~\ref{subsection:meshes},
we review the standard conforming modal elements in Section~\ref{subsection:modal}.
Section~\ref{subsection:CR-standard}
is devoted to the design of (implicit and explicit versions)
CR elements as in~\cite{Crouzeix-Raviart:1973}.
Eventually, the convergence analysis of standard CR methods
is detailed in Section~\ref{subsection:convergence-standard-CR}.

\paragraph*{Outline of the paper.}
New CR elements (and corresponding methods) of even degree
are the topic of Section~\ref{section:CR-even};
the theoretical results therein proven
are assessed on the numerical level in Section~\ref{section:numerics};
both the $\h$- and $\p$-versions are considered.
Variable order elements are detailed in Section~\ref{section:variable-degree};
there, we also check the performance of the $\h\p$-version of the scheme for the approximation
of corner singularities.
The behaviour of new even degree CR-type discretizations
of the Stokes' equations is the topic of Section~\ref{section:inf-sup}.
Conclusions are drawn in Section~\ref{section:conclusions}.

\subsection{Functional setting and model problem} \label{subsection:functional-setting}
We employ standard notation for differential operators:
$\Delta \cdot$ and $\nabla \cdot$ are the Laplacian and the gradient.

Consider a bounded, Lipschitz domain~$\Omega$ in~$\Rbb^2$
with boundary $\partial \Omega$.
Let $\normal{\Omega}$ and~$\hOmega$ be
its outward unit normal vector and diameter.
The space~$\Lebp$, $p$ in $[1,\infty)$,
is the space of Lebesgue measurable functions
that are $p$-integrable,
which we endow with the norm $\Norm{\cdot}_{L^p(\Omega)}$;
if $p=2$, we write $\Norm{\cdot}_{L^2(\Omega)} = \Norm{\cdot}_{0,\Omega}$.

Let $\boldalpha$ be a multi-index in $\mathbb{N}^2$,
$|\boldalpha|:=\sum_{i=1}^2\boldalpha_i$ be its length,
and $\weakder$ be the distributional derivative with partial derivative
of order~$\boldalpha_j$ along the direction $x_j$, $j=1,2$.
For~$s$ nonnegative integer and $p$ in $[1,\infty)$,
we consider the Sobolev spaces
\begin{equation*}
    W^{s,p}(\Omega):=\left\{v\in L^p(\Omega)\,|\,D^{\boldalpha}v\in L^p(\Omega)
                    \quad \forall \boldalpha\in\mathbb{N}^2,\; |\boldalpha|\leq s\right\}.
\end{equation*}
For a nonnegative integer~$s$,
we denote the Sobolev space $W^{s,2}(\Omega)$ by~$H^s(\Omega)$,
which we endow with standard inner product $(\cdot,\cdot)_s$,
seminorm~$\SemiNorm{\cdot}_{s,\Omega}$,
and norm~$\Norm{\cdot}_{s,\Omega}$;
in particular, we write
\[
\SemiNorm{\cdot}_{s,\Omega}^2
:=\sum_{\vert\boldalpha\vert=s}
\Norm{D^{\boldalpha}\cdot}_{0,\Omega}^2,
\qquad\qquad\qquad
\Norm{\cdot}_{s,\Omega} ^2
:= \sum_{\ell=0}^s \hOmega^{2(\ell-s)} 
\SemiNorm{\cdot}_{\ell,\Omega}^2 .
\]
Above, for $s = 0$, we are using the notation
$H^0(\Omega)=L^2(\Omega)$
and $\Norm{\cdot}_{0,\Omega} = \SemiNorm{\cdot}_{0,\Omega}$.
For a real~$s$, the Sobolev space~$H^s(\Omega)$ is defined via Riesz-Thorin interpolation.

A trace theorem holds true for Sobolev spaces \cite[Theorem~3.10]{Ern-Guermond:2021-A};
in particular, there exists a continuous operator $\trace$ from $H^s(\Omega)$ in~$L^2(\Omega)$
for $1/2 < s < 3/2$;
we denote the image of~$H^s(\Omega)$ through this operator
by~$H^{s-\frac12}(\partial \Omega)$.
Given~$g$ in $H^{s-\frac12}(\partial \Omega)$,
this result allows us to define the space
\[
H^s_g(\Omega) := \{v \in H^s(\Omega) \mid \gamma_\Omega(v) = g\}.
\]
For~$g=0$, this space coincides~\cite{Meyers-Serrin:1964}
with the closure of $C^{\infty}_0(\Omega)$ in~$\Hone$.

Sobolev spaces of negative order~$-s$, $s>0$, are defined by duality.
In particular, for $s=1$ and the $H^{-1}-H^1$ duality pairing $\langle \cdot, \cdot \rangle$,
we write
\[
H^{-1}(\Omega) = [H^{1}_0(\Omega)]',
\qquad\qquad\qquad
\Norm{\cdot}_{-1,\Omega}
= \sup_{w \in H^1_0(\Omega)} \frac{\langle \cdot, w \rangle}{\Norm{w}_{1,\Omega}}.
\]
\medskip

\noindent We consider the Poisson problem
\begin{equation}\label{strong-pois}
\begin{cases}
\text{find } u \text{ such that} \\
        -\Delta u  = f  & \text{in } \Omega \\
        u = 0           & \text{on } \partial \Omega.
\end{cases}
\end{equation}
Define the bilinear form $a:\Hone\times \Hone \to \Rbb$ as
\begin{equation*}
    \aparent{v}{w} :=  (\nabla v, \nabla w)_{0,\Omega} .
\end{equation*}
For~$f$ in~$H^{-1}(\Omega)$,
a weak formulation of \eqref{strong-pois} reads
\begin{equation} \label{weak:form}
\begin{cases}
\text{find } u \in H^1_0(\Omega) \text{ such that} \\ 
\aparent{u}{v} = \langle f,v \rangle & \forall v  \in H^1_0(\Omega). \\
\end{cases}
\end{equation}
The well-posedness of~\eqref{weak:form}
is standard and follows from Lax-Milgram's lemma.

\subsection{Meshes, and broken Sobolev and polynomial spaces}     \label{subsection:meshes}
We consider sequences $\{ \taun \}$ of conforming simplicial meshes in two dimensions,
and corresponding sequences of edges $\{ \Fcaln \}$ (resp. vertices $\{ \Vcaln \}$),
which we further split into boundary $\{ \FcalnB \}$ (resp. $\{ \VcalnB \}$)
and internal $\{ \FcalnI \}$ (resp. $\{ \VcalnI \}$ ) edges (resp. vertices).
For a given element~$\E$ in a mesh~$\taun$,
$\nbfE$ and~$\hE$ denote
its unit outward normal vector and diameter;
we denote the maximum diameter of a mesh $\taun$ by $\h:= \max_{K\in\mesh} \hE$.
For a given edge~$\F$ in~$\Fcaln$, $\nbfF$ and~$\hF$
denote a fixed once-and-for-all unit normal vector and its length;
on boundary edges, we fix $\nbfF = \nbfE{}_{|\F}$.
The sets of edges and vertices of a given element~$\E$ are~$\FcalE$ and ~$\VcalE$, respectively.
The vertices of~$\E$ are denoted by~$\nu_{i,K}$, $i=1,2,3$,
and the corresponding barycentric coordinates by $\lambdaEi$;
when convenient, we shall replace $\lambdaEi$
with~$\lambdaEF$ where~$\F$ is the edge
opposite to the $i$-th vertex,
and, when no confusion occurs,
we shall omit the subscript~$\E$
\footnote{Here, the barycentric coordinate associated
with the $i$-th vertex or the opposite edge $\F$
is the linear Lagrangian function
equal to~$1$ at that vertex and zero on $\F$.}.
For~$\F$ in~$\Fcaln$, we define edge patches
\begin{equation} \label{facet-patch}
\omegaF :=  \bigcup \{ \E \in \taun \mid \F \in \FcalE \}.
\end{equation}
We demand some regularity on the sequences of meshes:
the ratio between the maximum inradius and the minimum exradius
of all the elements in each mesh of the sequence is uniformly bounded;
in particular, there exists~$\shapereg$
larger than or equal to~$1$ such that
\begin{equation} \label{shape-regularity}
    \hE \le \shapereg \ \hF
    \qquad\qquad\qquad\qquad
    \forall \F \in \FcalE,
    \; \E \in \taun,
    \; \taun \in \{ \taun \}.
\end{equation}
With each mesh and nonnegative~$s$, we associate the broken Sobolev spaces
\[
H^s(\taun,\Omega)
:= \{ v \in L^2(\Omega) \mid v _{|\E} \in H^s(\E) \quad \forall \E \in \taun \};
\]
we further associate the broken gradient~$\nablah : H^1(\taun,\Omega) \to [L^2(\Omega)]^2$
defined as $(\nablah v)_{|\E} := \nabla (v_{|\E})$,
and the broken seminorm and norm
\begin{equation} \label{broken:sn-n}
\SemiNorm{v}\hoOm^2
:= (\nablah v, \nablah v)_{0,\Omega},
\qquad\qquad\qquad
\Norm{v}\hoOm^2
:=  \hOmega^{-2} \Norm{v}_{0,\Omega}^2
    + \SemiNorm{v}\hoOm^2.
\end{equation}
For each edge~$F$ in $\Fcaln$ and~$v$ in $H^{\frac12+\varepsilon}(\taun)$,
$\varepsilon$ positive,
we introduce the jump
\begin{equation}\label{eq:jumps}
\jump{v}_{\F} = \jump{v} :=
\begin{cases}
\traceEo (v)_{|\F} (\normal{\E_1}\cdot\nbfF) 
+ \traceEt (v)_{|\F} (\normal{\E_2}\cdot\nbfF) 
         & F\in \FcalnI,\, F = \partial \E_1\cap\partial \E_2 \\
\traceE(v)_{|\F}
        & F\in \FcalnB,\, F = \partial\Omega\cap \partial \E
\end{cases}
\end{equation}
and average operators
(taking values in $L^2(\F)$)
\[
\average{v}_\F = \average{v} =
\begin{cases}
\frac{1}{2}
( \traceEo(v) + \traceEt(v) )_{|\F}
& F\in \FcalnI,\, F = \partial \E_1\cap\partial \E_2 \\
\traceE(v)_{|\F}
& F\in \FcalnB,\, F = \partial\Omega\cap \partial \E.
    \end{cases}
\]
The vector version of these operators
is defined analogously
and the operator symbols are the same.

We denote the space of polynomials of order smaller than or equal to a nonnegative integer~$p$
over an element~$\E$ or a edge~$\F$ by $\Pbb_\p(\E)$ and~$\Pbb_\p(\F)$.
If $p$ is a negative integer,
then these two spaces are set to~$\{0\}$.
We further define
\begin{equation*}
\Pbbtaun 
:= \left\{ \qp \in L^2(\Omega) \Big \vert
    \qp{}_{|\E} \in \Pbb_\p (\E) \quad \forall K\in\mesh   \right\}
\end{equation*}
and
\begin{equation*}
\PbbB
:= \left\{ \qp \in L^2(\partial \Omega) \Big \vert
    {\qp}_{|\F} \in \Pbb_\p (\F) \quad \forall \F \in \FcalnB   \right\} .
\end{equation*}
We are now in a position to define nonconforming Sobolev spaces
of degree~$\p$ and order~$s$ larger than~$1/2$:
\begin{equation}\label{nc-Sobolev}
\Hsnctaunom{s}{\p}
:= \Big\{ v \in H^s(\taun,\Omega) \Big |
                        ( \jump{v}_\F,\qpmoF )_{0,\F} = 0 \quad \forall \qpmoF \in \Pbb_{\p-1}(\F),
                        \, \forall \F \in \FcalnI \Big\}.
\end{equation}
We can include ``boundary conditions'' in the nonconforming space above in a weak sense:
given~$g$ in $L^1(\F)$ for all boundary edges~$\F$, we set
\footnotesize\begin{equation}\label{nc-Sobolev-BCs}
H^{s,nc}_{\p,g} (\taun,\Omega)
:= \Big\{ v \in \Hsnctaunom{s}{\p} 
    \Big | ( \jump{v}_\F, \qpmoF )_{0,\F} 
    = ( g , \qpmoF)_{0,\F}
    \quad \forall \qpmoF \in \Pbb_{\p-1}(\F),
    \, \forall \F \in \FcalnB \Big\}.
\end{equation} \normalsize
For positive~$a$ and~$b$,
we shall denote with~$a \lesssim b$
the existence of a positive constant~$C$
only depending on the domain~$\Omega$,
the constant~$\rho$ in~\eqref{shape-regularity},
and on occasions on the polynomial degree,
such that~$a \leq C\, b$.

A generalized Poincar\'e--Friedrichs inequality
holds true~\cite{Brenner:2003}:
\begin{equation} \label{Poincare-Friedrichs}
     \Norm{v}_{0,\Omega} \lesssim \SemiNorm{v} \hoOm
     \qquad\qquad\qquad
     \forall v \in H^{1,nc}_{1,0}(\taun,\Omega).
\end{equation}
As a matter of notation, given $\alpha,\beta>-1$,
$\Lp(\cdot)$ and $J_\p^{(\alpha,\beta)}(\cdot)$ denote
the univariate Legendre and Jacobi polynomials of degree $p$ over~$\Ihat:= [-1,1]$.

\subsection{Conforming (modal) finite elements} \label{subsection:modal}
We introduce the standard conforming finite element space of order~$\p$
\begin{equation} \label{Xn}
\Xn := \{ \vn \in \Hone \mid {\vn}_{|\E} \in \Pbb_{p}(\E) \quad \forall \E \in \taun \}.
\end{equation}
An explicit basis of this space
is detailed in~\cite{Szabo-Babuska:1991}.
Here, we recall the basis functions on the
reference element~$\Ehat$
with vertices $(-1,0)$, $(1,0)$, and~$(0,\sqrt 3)$,
and, with an abuse of notation, barycentric coordinates
$\widehat\lambda_{\Ehat,\Fhat_i}$, $\Fhat_i$ in~$\FcalEhat$
for $i=1,2,3$.
On~$\Ehat$, we distinguish three different
types of basis functions:
\begin{itemize}
\item given~$\nuhat_m$, $m=1,2,3$, the vertices of~$\Ehat$, the hat functions
    \begin{equation*}
        \phihatjVhati{1} = \frac12 \left( 1-x-\frac{y}{\sqrt{3}}\right),
        \qquad\qquad
        \phihatjVhati{2} = \frac12 \left( 1+x-\frac{y}{\sqrt{3}}\right),
        \qquad\qquad
        \phihatjVhati{3} = \frac{y}{\sqrt 3};    
    \end{equation*}
\item given the edges~$\Fhati$, $i=1,2,3$, of~$\Ehat$,
    the $\dim(\Pbb_{\p-2}(\Fhat))$ edge bubble functions
    \small\begin{equation} \label{modal-facet}
    \begin{split}
    & \phihatjFhati := \frac{8\sqrt{4j+2}}{j(j+1)}
    \lambdahatFhati{i\bmod{3}+1}\lambdahatFhati{(i+1)\bmod{3}+1}
    \Lj'\left(-\lambdahatFhati{i\bmod{3}+1}+\lambdahatFhati{(i+1)\bmod{3}+1}\right) \\
    & \qquad\qquad\qquad \forall i=1,2,3 ,
        \;   j = 1,\dots,\p-1;
    \end{split}  
    \end{equation} \normalsize
\item given the natural bijection
    between $\ell$ in $\{1,\dots,\dim\Pbb_{\p-3}(\Ehat)\}$
    and $(r_1,r_2)$ in $\{ 0,\dots,\p-3\}^2$
    with $r_1+r_2\le \p-3$,
    the $\dim(\Pbb_{\p-3}(\Ehat))$ elemental bubble functions
    \begin{equation} \label{modal-element}
    \begin{split}
    & \widehat\varphi_{\ell}^{\widehat\E} 
    = \widehat\varphi_{r_1,r_2}^{\widehat\E}
    := \lambdahatFhati{1}\lambdahatFhati{2}\lambdahatFhati{3} L_{r_1}(\lambdahatFhati{2}-\lambdahatFhati{1})L_{r_2}(2\lambdahatFhati{3}-1)\\
    & \qquad\qquad\qquad\qquad
            \forall  r_1, r_2 \text{ with } r_1+r_2 = 0,\dots,p-3.
    \end{split}
    \end{equation}
\end{itemize}

\subsection{Arbitrary order, standard Crouzeix-Raviart elements in 2D} \label{subsection:CR-standard}
We introduce the Crouzeix-Raviart (CR) space
\cite{Crouzeix-Raviart:1973, Raviart-Thomas:1977}
of positive integer order~$\p$:
\begin{equation}\label{implicit-CR}
        \Vnim := \{ \vn \in \Hsnctaunom{1}{\p} |\,
                                {\vn}_{|K}\in\Pbb_p(K)
                                \quad \forall \E \in \taun \}.
\end{equation}
In two dimensions, this spaces coincides~\cite{Ainsworth-Rankin:2008} with the following space:
\small\begin{equation*}
   \begin{split}
    \Vnim = \{ \vn \in \Pbb_p(\mesh)  \,|&\,
     \vn \text{ is continuous at the Gau\ss--Legendre nodes of order~$\p$ on each } F\in\FcalnI\}.
\end{split}
\end{equation*} \normalsize
We shall refer to these spaces, which are nonconforming in the sense
of~\cite{Strang:1972}, as the \emph{implicit} CR spaces of order~$\p$.
It is known~\cite{Baran-Stoyan:2007} that unisolvent sets
of degrees of freedom for the space~$\Vnim$
are different for odd and even~$\p$.

An alternative way \cite{Stoyan-Baran:2006, Baran-Stoyan:2007}
to construct nonconforming spaces \emph{\'a la} Crouzeix-Raviart
consists in defining \emph{explicit} spaces, i.e., spans
of $H^1$ conforming finite element functions
\emph{plus} extra nonconforming bubbles.
In Sections~\ref{subsubsection:CR-standard-odd} and~\ref{subsubsection:CR-standard-even} below,
we provide details on such bubble functions in two dimensions
for odd and even degrees, respectively;
show that the \emph{implicit} and \emph{explicit} constructions
deliver exactly the same spaces;
exhibit unisolvent sets of degrees of freedom.

\subsubsection{Standard Crouzeix-Raviart elements (odd degree)}
\label{subsubsection:CR-standard-odd}

Let~$p$ be an odd positive integer.
Given~$\E$ an element, consider for all~$\F$ in~$\FcalE$
the mapped Legendre polynomials~$\LjF$ on~$\F$, $j$ nonnegative integer,
and a basis $\{\mboldbetaE\}_{\vert\boldbeta\vert=0}^{\p-3}$ of $\Pbb_{\p-3}(\E)$
consisting of elements that are invariant with respect to dilations and translations.
We introduce a set of linear functionals:
given~$v$ in~$W^{1,1}(\E)$,
which admits trace in $L^1(\F)$
for all $\F$ in $\FcalE$
\cite[Theorem 3.10]{Ern-Guermond:2021-A}
\footnote{The integrals in~\eqref{dof-moment:odd} have to be understood as duality pairings in $L^1-L^\infty$.
Similar comments will be omitted in the sequel.},
\begin{subequations}  \label{dof-moment:odd}
\begin{align}
    &|\F|^{-1} (v,\LjF)_{0,\F}
    & \forall j = 0, \dots, \p-1, \quad\forall \F\in \FcalE; \label{dof-moment-edge:odd}\\
    & |\E|^{-1} (v,\mboldbetaE)_{0,\E}
    &     \forall \vert \boldbeta\vert = 0, \dots, \p-3 \label{dof-moment-bulk:odd}.
\end{align}
\end{subequations}
A crucial property of the above linear functionals
is detailed in the next result.

\begin{lemma}\label{lemma:unisolv-odd}
Given~$\E$ in~$\taun$ and an odd~$\p$,
the linear functionals \eqref{dof-moment:odd}
are a unisolvent set of degrees of freedom
for~$\Pbb_{\p}(\E)$.
\end{lemma}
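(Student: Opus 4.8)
The plan is the classical one for finite-element unisolvence: since the number of functionals in~\eqref{dof-moment:odd} equals $\dim\Pbb_\p(\E)$, it suffices to show that the only $v\in\Pbb_\p(\E)$ annihilated by all of them is $v=0$; the functionals are then automatically a basis of the dual of $\Pbb_\p(\E)$. I would first record the count: \eqref{dof-moment-edge:odd} contributes $3\p$ functionals and \eqref{dof-moment-bulk:odd} contributes $\dim\Pbb_{\p-3}(\E)=(\p-1)(\p-2)/2$, and $3\p+(\p-1)(\p-2)/2=(\p+1)(\p+2)/2=\dim\Pbb_\p(\E)$. So let $v\in\Pbb_\p(\E)$ be annihilated by all functionals in~\eqref{dof-moment:odd}.

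First I would exploit the edge functionals. Since $\{\LjF\}_{j=0}^{\p-1}$ is a basis of $\Pbb_{\p-1}(\F)$, vanishing of~\eqref{dof-moment-edge:odd} says that $v_{|\F}$ is $L^2(\F)$-orthogonal to $\Pbb_{\p-1}(\F)$; as $v_{|\F}\in\Pbb_\p(\F)$ and the $L^2(\F)$-orthogonal complement of $\Pbb_{\p-1}(\F)$ inside $\Pbb_\p(\F)$ is the line spanned by the mapped Legendre polynomial $L_\p^\F$ of degree $\p$ (Legendre orthogonality is preserved by affine changes of variables), there is a constant $c_\F$ with $v_{|\F}=c_\F L_\p^\F$ for every $\F\in\FcalE$.

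The crux is the next step, and it is the only place the oddness of $\p$ enters. Because $\p$ is odd, $\Lp$ is an odd function on $\Ihat$, so $L_\p^\F$ takes opposite values ($\pm1$) at the two endpoints of $\F$; hence, for each edge of $\E$, the values of $v$ at its two endpoints are opposite. Denoting the vertices of $\E$ by $P_1,P_2,P_3$, going $P_1\to P_2\to P_3\to P_1$ along the three edges flips the sign three times, so $v(P_1)=-v(P_2)=v(P_3)=-v(P_1)$, forcing $v(P_1)=v(P_2)=v(P_3)=0$; consequently $c_\F=0$ and $v_{|\F}=0$ for every $\F\in\FcalE$, i.e.\ $v$ vanishes on $\partial\E$. (For even $\p$ this argument collapses, since $\Lp$ is then even and the signs do not flip; that is precisely why even-degree CR spaces classically need elemental bubbles instead.)

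It remains to run the bulk functionals. Since $v$ vanishes on each edge of $\E$, its restriction to the line carrying that edge is a univariate polynomial with infinitely many zeros, hence identically zero, so each barycentric coordinate $\lambdaEi$ divides $v$; the three being pairwise non-proportional and $v$ having degree $\le\p$, we get $v=\lambda_{\E,1}\lambda_{\E,2}\lambda_{\E,3}\,q$ with $q\in\Pbb_{\p-3}(\E)$. Writing $q=\sum_{\boldbeta}d_\boldbeta\,\mboldbetaE$ in the given basis of $\Pbb_{\p-3}(\E)$ and using the vanishing of~\eqref{dof-moment-bulk:odd},
\[
0=\sum_{\boldbeta}d_\boldbeta\,(v,\mboldbetaE)_{0,\E}=(v,q)_{0,\E}=\int_\E\lambda_{\E,1}\lambda_{\E,2}\lambda_{\E,3}\,q^2 ,
\]
and since $\lambda_{\E,1}\lambda_{\E,2}\lambda_{\E,3}>0$ in the interior of $\E$ this forces $q=0$, whence $v=0$. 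The only genuinely non-routine ingredient is the vertex-sign argument; everything else is bookkeeping and the standard divisibility of a polynomial by linear forms on whose zero sets it vanishes.
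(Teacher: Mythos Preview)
Your proof is correct and follows essentially the same route as the paper's own proof: count dimensions, use the edge moments to force $v_{|\F}=c_\F L_\p^\F$, use oddness of $\p$ to run the sign-flip argument at the vertices, factor out the barycentric coordinates, and kill the remaining $\Pbb_{\p-3}$ part with the bulk moments via positivity of $\lambda_1\lambda_2\lambda_3$. If anything, your write-up is slightly more explicit than the paper's in two places: you spell out the dimension count $3\p+(\p-1)(\p-2)/2=(\p+1)(\p+2)/2$, and you make explicit that $c_\F=0$ (hence $v_{|\partial\E}=0$) before invoking the factorization $v=\lambda_1\lambda_2\lambda_3 q$, whereas the paper passes directly from ``$v$ vanishes at the vertices'' to the factorization.
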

\begin{proof}
Let $\vn$ be in $\Pbb_\p(\E)$
such that the moments \eqref{dof-moment:odd} are zero.
The number of linear functionals in \eqref{dof-moment:odd} is equal to
$\dim (\Pbb_{\p}(\E))$;
therefore, it suffices to prove that $\vn$ is identically zero over~$\E$.

Given an edge~$\F$ in~$\FcalE$,
using that the edge moments \eqref{dof-moment-edge:odd} are zero,
the restriction of $\vn$ on~$\F$ belongs to $\Pbb_{\p}(\F)$
and is orthogonal to $\Pbb_{\p-1}(\F)$.
Hence, there exists $c$ in $\Rbb$ such that
${\vn}_{|\F}(\xi) = c\, L^\F_p(\xi)$ for all $\F$ in~$\FcalE$.
Recall~\cite[eq. (3.175)]{Shen-Tang-Wang:2011}
that $L_\p(-1)=(-1)^\p = -1$ and~$L_\p(1)=1$.
Since $\p$ is odd and ${\vn}_{|\partial \E}$ belongs to $\mathcal C^0(\partial \E)$,
the following identities are valid:
\begin{equation} \label{trick-odd-case}
    \vn(\nu_1) = -\vn(\nu_2) = \vn(\nu_3) = -\vn(\nu_1).
\end{equation}
Therefore, $\vn(\nu_1)=\vn(\nu_2)=\vn(\nu_3)=0$
and there exists $q_{\p-3}^\E$ in $\Pbb_{\p-3}(\E)$
such that $\vn = \lambda_1 \lambda_2 \lambda_3 q_{\p-3}^\E.$
Using that the elemental moments \eqref{dof-moment-bulk:odd}
are zero as well, we infer
$( \lambda_1 \lambda_2 \lambda_3 (q_{\p-3}^\E)$, $q_{\p-3}^\E)_{0,\E}$ $= 0.$
Since the product of the barycentric coordinates in~$\E$ is positive,
we deduce that~$q_{\p-3}^\E$ vanishes in~$\E$.
\end{proof}
\begin{remark} \label{remark:GL}
An alternative avenue to define edges
degrees of freedom consists
in replacing~\eqref{dof-moment-edge:odd}
by point evaluations
at the $\p$ Gau\ss-Legendre nodes \cite[Lemma 5]{Raviart-Thomas:1977};
from an historic point of view, 
this was the original approach in~\cite{Crouzeix-Raviart:1973}.
\end{remark}

Next, we discuss an alternative definition
for CR elements of odd degree.
With each edge~$\F$ in~$\Fcaln$,
we associate the edge nonconforming bubble function~\cite{Stoyan-Baran:2006}
\begin{equation}\label{nc-edge:functions}
\bpF :=
\begin{cases}
      L_p\left( 1-2 \lambdaEF\right)
      \qquad\qquad
      &\forall \E\in\taun,\; \E \subset \omegaF  \\
      0 
      &\text{otherwise}.
\end{cases}
\end{equation}
Under the notation in Section~\ref{subsection:modal}, we define
\begin{equation}\label{explicit-CR:odd}
    \Vnex := \Span\left(                
                \bpF \text{ and }  \{\phijF\}_{j=1}^{\p-1}\quad
                \forall \F \subset \Fcaln ;\quad
                \{\phiellE\}_{\ell=1}^{(\p-1)(\p-2)/2} 
                \quad \forall \E \in \taun   \right).
\end{equation}
The bubble function~$\bpF$ does not belong to~$\Xn$ in~\eqref{Xn},
whence the name ``\emph{nonconforming}''.
As discussed in~\cite{Carstensen-Sauter:2022,Ciarlet-Ciarlet-Sauter-Simian:2016,Stoyan-Baran:2006},
the functions spanning the right-hand side of~\eqref{explicit-CR:odd}
are linearly independent.
This fact allows us to define an explicit basis
for~$\Vnex$; the corresponding degrees
of freedom are the coefficients
of the expansion with respect to that basis.

The next result investigates the relation
between the spaces~$\Vnim$ and~$\Vnex$
for odd~$\p$.

\begin{lemma} \label{lemma:equivalent-definition-spaces:odd}
The spaces~$\Vnim$ and~$\Vnex$ defined
in~\eqref{implicit-CR} and~\eqref{explicit-CR:even} coincide
for odd~$\p$.
\end{lemma}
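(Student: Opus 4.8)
The plan is to show the two subspace inclusions $\Vnex \subseteq \Vnim$ and $\Vnim \subseteq \Vnex$, using the unisolvence result of Lemma~\ref{lemma:unisolv-odd} together with a dimension count. First I would verify that every generator of $\Vnex$ in~\eqref{explicit-CR:odd} lies in $\Vnim$. For the conforming modal functions $\phijF$ and $\phiellE$ this is immediate, since they belong to $\Xn \subseteq \Hone \subseteq \Hsnctaunom{1}{\p}$ (continuous functions have vanishing jumps). The only nontrivial check is for the edge nonconforming bubble $\bpF$: on any internal edge $\F' \neq \F$ of $\omegaF$ one has $\bpF{}_{|\F'} \in \Pbb_\p(\F')$ and, because $L_\p$ is odd in its argument while $\lambdaEF$ changes sign appropriately across the two elements, the jump $\jump{\bpF}_{\F'}$ is an odd polynomial on $\F'$ that is in fact orthogonal to $\Pbb_{\p-1}(\F')$ — concretely, $\jump{\bpF}_{\F'}$ is a scalar multiple of $L_\p$ evaluated on $\F'$, hence orthogonal to all lower-degree polynomials; on $\F$ itself, the two traces of $\bpF$ from the two elements in $\omegaF$ agree up to the sign $L_\p(1-2\lambdaEF) = L_\p(\pm 1) = \pm 1$, and the resulting jump, if any, is again $\propto L_\p$, hence orthogonal to $\Pbb_{\p-1}(\F)$. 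Therefore $\bpF \in \Vnim$, giving $\Vnex \subseteq \Vnim$.

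For the reverse inclusion I would argue by dimensions. The functions spanning $\Vnex$ are linearly independent (as recalled after~\eqref{explicit-CR:odd}, citing \cite{Carstensen-Sauter:2022,Ciarlet-Ciarlet-Sauter-Simian:2016,Stoyan-Baran:2006}), so
\[
\dim \Vnex = \card(\Fcaln)\bigl(1 + (\p-1)\bigr) + \card(\taun)\,\frac{(\p-1)(\p-2)}{2}.
\]
On the other hand, I would compute $\dim \Vnim$ by the standard gluing argument: a function in $\Pbb_\p(\mesh)$ is determined by its values on each element ($\card(\taun)\dim\Pbb_\p(\E)$ parameters), and the nonconforming constraint $(\jump{v}_\F,\qpmoF)_{0,\F}=0$ imposes exactly $\dim\Pbb_{\p-1}(\F) = \p$ independent conditions per internal edge; these conditions are independent across edges by a localization argument (choose a function supported near a single edge). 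Hence $\dim\Vnim = \card(\taun)\dim\Pbb_\p(\E) - \card(\FcalnI)\,\p$. A bookkeeping computation using Euler's relation for a triangulation ($\card(\taun)$, $\card(\Fcaln)$, $\card(\Vcaln)$ related, with $\card(\Fcaln) = \card(\FcalnI) + \card(\FcalnB)$ and each triangle contributing $3$ edge-incidences) shows this equals $\dim\Vnex$. Combined with $\Vnex \subseteq \Vnim$ already established, equality of dimensions forces $\Vnex = \Vnim$.

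I expect the main obstacle to be the dimension count for $\dim\Vnim$ — specifically, verifying that the jump conditions across distinct internal edges are mutually independent, and then massaging the two combinatorial expressions into the same closed form using the mesh's Euler characteristic. An alternative, arguably cleaner route that avoids some of this bookkeeping is to bypass the global count and instead work edge-by-edge: given $\vn \in \Vnim$, I would subtract off conforming modal contributions ($\phijF$, $\phiellE$) to reduce to a function whose trace on each edge is a multiple of $L_\p^\F$; then, using the continuity of $\vn$ at the vertices (which fails in general!) — no, this is exactly where the nonconforming nature bites, so the vertex trick from Lemma~\ref{lemma:unisolv-odd} is not directly available globally. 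Thus I would stick with the dimension-counting approach: it is the most robust, and the combinatorial identity, while tedious, is elementary. A final sanity remark: the statement as written references~\eqref{explicit-CR:even}, which is evidently a typo for~\eqref{explicit-CR:odd}; I would silently read it as the latter.
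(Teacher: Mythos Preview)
Your approach is essentially the same as the paper's: show $\Vnex \subseteq \Vnim$ by checking the generators, then match dimensions via the edge-incidence identity $\card(\FcalnI)+\card(\Fcaln)=3\card(\taun)$ (which is exactly what the paper uses, labeled there as~\eqref{euler-1}). One small imprecision: on the edge~$\F$ itself, $\lambdaEF=0$ from \emph{both} sides, so both traces of $\bpF$ equal $L_\p(1)=1$ and the jump is exactly zero---not merely proportional to $L_\p$---but your conclusion is unaffected.
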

\begin{proof}
Let $\numedt$, $\numedi$, and $\numel$ denote the number
of total edges, internal edges, and elements of~$\taun$, respectively.
We split the proof in two steps.

\paragraph*{$\Vnex$ is contained in~$\Vnim$.}
The space~$\Vnex$ consists of modal basis functions,
i.e., functions that are $H^1$ conforming
and therefore contained in~$\Vnim$,
and edge bubble functions as in~\eqref{nc-edge:functions}.
Such bubble functions
have vanishing jumps of order up to~$\p-1$
over all the edges in~$\Fcaln$.

To see this, we fix a generic~$\F$ in~$\Fcaln$ and~$\bpF$ as in~\eqref{nc-edge:functions}.
We have that~$\bpF$ is continuous
over the interior of~$\omegaF$,
whence it has vanishing jumps over~$\F$.
On the edges~$\widetilde \F$ lying on~$\partial \omegaF$,
$\bpF{}_{|\widetilde\F}$ is the univariate
Legendre polynomial of order~$\p$
starting from the vertex opposite to~$\F$;
in particular, its moments with univariate polynomials
of order~$\p-1$ are zero over~$\widetilde \F$.
On the other edges of the mesh,
the bubble is zero by definition, whence it has zero jumps.

\paragraph*{$\Vnim$ and~$\Vnex$ have the same dimension.}
The dimension of~$\Vnim$
is the dimension of piecewise polynomials
of maximum order~$\p$ over~$\taun$,
minus the total number of ``no jump''
conditions
over the internal edges:
\begin{equation} \label{dim:Vnim}
\dim( \Vnim )
= \dim( \Pbbtaun ) - \p \numedi
= \dim( \Pbb_{\p}(\E)) \numel - \p \numedi.
\end{equation}
On the other hand, the dimension of~$\Vnex$ is given by
the number of total edges
(for the nonconforming bubbles)
plus the number of modal basis functions
excluding those associated with the vertices:
\begin{equation} \label{dim:Vnex}
\dim( \Vnex ) 
= \numedt
  + \dim( \Pbb_{\p-2}(\F)) \numedt
  + \dim( \Pbb_{\p-3}(\E)) \numel 
= \p \numedt
   + \dim( \Pbb_{\p-3}(\E)) \numel .
\end{equation}
Recall the Euler's formula
\begin{equation} \label{euler-1}
    \numedi+\numedt = 3\numel.
\end{equation}
Subtracting the identities in~\eqref{dim:Vnim} and~\eqref{dim:Vnex}, we deduce
\[
\frac{(p+2)(p+1)-(p-1)(p-2)}{2}\numel - p(\numedi+\numedt)
\overset{\eqref{euler-1}}{=} 0.
\]
\end{proof}

Lemma~\ref{lemma:equivalent-definition-spaces:odd}
entails that two different sets of degrees of
freedom are at disposal.
A set of global degrees of freedom
for Crouzeix-Raviart spaces
is obtained by a coupling of the degrees
of freedom in~\eqref{dof-moment-edge:odd};
alternatively, we pick the coefficients in the expansion
with respect to the basis in~\eqref{explicit-CR:odd}.
The former is used on the theoretical level,
e.g., while deriving interpolation estimates in CR spaces,
see Remark~\ref{remark:interpolation} below;
the latter is typically used in the practice
for the implementation of the method.

\begin{remark} \label{remark:3D-case}
Throughout, we shall be proving results
similar to Lemma~\ref{lemma:equivalent-definition-spaces:even}.
Another reason why we distinguish the implicit
and explicit spaces
is that in higher dimension
the two definitions do not coincide in general;
cf., e.g., \cite{Sauter-Torres:2023}.
\end{remark}

\subsubsection{Standard Crouzeix-Raviart elements with even degree} \label{subsubsection:CR-standard-even}
Proceeding as in Section~\ref{subsubsection:CR-standard-odd} for even~$\p$,
we are not able to show unisolvence
of the degrees of freedom~\eqref{dof-moment:odd}:
the proof of Lemma~\ref{lemma:unisolv-odd} fails
since display~\eqref{trick-odd-case} would read
\begin{equation} \label{trick-even-case}
\vn(\nu_1) = \vn(\nu_2) = \vn(\nu_3) = \vn(\nu_1),
\end{equation}
which does not imply that~$\vn$ is zero
at the vertices of~$\E$;
furthermore,
using Gau\ss-Legendre-type degrees of freedom
as those discussed in Remark~\ref{remark:GL}
would lead to a linear dependence condition
as described in~\cite[pp. 506-507]{Fortin-Soulie:1983}
for the case $\p=2$.

We recall some details here;
the general even degree case presents similar limitations.
Given a triangle~$\E$,
we denote its unit tangential vector
and the corresponding tangential derivative
by~$\tauboldE$ and~$\partial_{\tauboldE}$.
Let $\{g_j^i\}_{i=1}^2$ and $m_j$ 
denote the Gau\ss--Legendre quadrature points
and the midpoints
of the edge~$F_j$, $j=1,2,3$, respectively. 

Given a quadratic polynomial~$\qtw$,
$\partial_{\tauboldE} \qtw{}_{|\partial\E}$ is piecewise linear:
the integral over the edge~$\F_j$
of $\partial_{\tauboldE} \qtw$
is equal to the evaluation at the midpoint~$m_j$;
in turn, this can be expressed as the difference
of the evaluation at the two Gau\ss-Legendre nodes,
which are symmetric with respect to~$m_j$.
Moreover, since $\qtw$ is smooth over~$\partial\E$,
we have
\begin{equation*}
    0=\int_{\partial \E} \partial_{\tauboldE} \qtw \dx{x}
    = \sum_{j=1}^3\mis{F_j} \partial_{\tauboldE} \qtw(m_j)
    = \sum_{j=1}^3\mis{F_j} \frac{\qtw(g_j^2)-\qtw(g_j^1)}{\mis{F_j}} \sqrt{3}.
\end{equation*}
Hence,
the following relation holds true:
\begin{equation*} 
    \qtw(g_1^2)-\qtw(g_1^2)+\qtw(g_2^2)-\qtw(g_2^1)+\qtw(g_3^2)-\qtw(g_3^1) = 0.
\end{equation*}
This is a linear dependence condition involving all the
degrees of freedom.
Moreover, it is possible to check~\cite{Fortin-Soulie:1983} that the quadratic polynomial
$2 - 3 (\lambda_1^2+\lambda_2^2+\lambda_3^2)$
has vanishing degrees freedom but is not zero in~$\E$.
Consequently, the choice of the degrees of freedom in~\eqref{dof-moment:odd}
is not suited to describe even degree CR spaces.
\medskip

In order to establish a set of unisolvent degrees of freedom,
the even counterpart of the explicit space in~\eqref{explicit-CR:odd}
comes to the rescue.
With each element~$\E$ in~$\taun$,
we associate the elemental nonconforming bubble function~\cite{Stoyan-Baran:2006}
\begin{equation}\label{nc-elemental:bubble}
    {b}_{p}^{\E} := 
    \begin{cases}
      \frac{1}{2}\big(-1+\sum\limits_{i=1}^{3}L_p(1-2\lambda_i)\big)
      \qquad\qquad
      &\text{on } \E\\
      0
      &\text{otherwise}.
    \end{cases}
\end{equation}
Following \cite{Ciarlet-Ciarlet-Sauter-Simian:2016,Stoyan-Baran:2006,Carstensen-Sauter:2022},
under the notation in Section~\ref{subsection:modal}, we define
\begin{equation} \label{explicit-CR:even}
   \Vnex := \Span\left(
                \varphi^\nu \quad \forall \nu \subset \Vcaln ;
                \quad
                \{\phijF\}_{j=1}^{\p-1}\quad
                \forall \F \subset \Fcaln;
                \quad
                \bpE \text{ and } \{\phiellE\}_{\ell=1}^{(\p-2)(\p-1)/2} 
                \quad \forall \E \in \taun \right).
\end{equation}
As discussed in \cite[Proposition 1]{Fortin-Soulie:1983} for the case $\p=2$,
these functions are linearly dependent:
there exists precisely one elemental nonconforming bubble
that can be written as a linear combination
of the other functions generating the space;
some details are provided in Lemma~\ref{lemma:equivalent-definition-spaces:even} below.

\begin{lemma} \label{lemma:equivalent-definition-spaces:even}
The spaces~$\Vnim$ and~$\Vnex$ defined
in~\eqref{implicit-CR} and~\eqref{explicit-CR:even} coincide for even~$\p$.
\end{lemma}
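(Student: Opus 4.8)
The plan is to follow the two-step template of Lemma~\ref{lemma:equivalent-definition-spaces:odd}: first establish the inclusion $\Vnex \subseteq \Vnim$, then show the two spaces have the same dimension. The new feature, absent in the odd case, is that the generating family of $\Vnex$ in~\eqref{explicit-CR:even} is linearly dependent, so the dimension count must be fed the number of linear dependencies among those generators; pinning this number down to exactly one is where the real work lies.

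For the inclusion, the modal generators of $\Vnex$ are $H^1$-conforming and hence lie in $\Vnim$, so only the elemental nonconforming bubbles $\bpE$ need attention. I would argue that, on an edge $\F \in \FcalE$ opposite the vertex carrying the barycentric coordinate $\lambda_i$, one has $\lambda_i = 0$ and $1-2\lambda_k = -(1-2\lambda_j)$ for the two remaining indices; since $\p$ is even, $L_\p$ is an even polynomial with $L_\p(1)=1$, so~\eqref{nc-elemental:bubble} collapses to $\bpE{}_{|\F} = L_\p(1-2\lambda_j)$, a mapped degree-$\p$ Legendre polynomial along $\F$. Because $\bpE$ vanishes on the opposite side of internal edges (and beyond $\E$ at boundary edges), the jump $\jump{\bpE}_\F$ is, up to the normal sign, exactly this polynomial, hence $L^2(\F)$-orthogonal to $\Pbb_{\p-1}(\F)$; thus $\bpE \in \Hsnctaunom{1}{\p}$ and $\bpE \in \Vnim$, which gives $\Vnex \subseteq \Vnim$.

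To locate the dependency, I would introduce the piecewise polynomial $w$ with $w{}_{|\E} := \sum_{i=1}^3 L_\p(1-2\lambda_{\E,i})$. The same evenness computation shows the traces of $w$ from the two sides of an internal edge agree, so $w$ is continuous, i.e.\ $w \in \Xn$; and since $w{}_{|\E} = 2\,\bpE{}_{|\E} + 1$ by~\eqref{nc-elemental:bubble}, summing over $\taun$ yields $\sum_{\E \in \taun} \bpE = \tfrac12(w-1) \in \Xn$, which is spanned by the modal generators in~\eqref{explicit-CR:even}. This is the desired relation, and it is essentially the only one: if a linear combination of all generators of $\Vnex$ vanishes, with $c_\E$ the coefficient of $\bpE$, then $\sum_\E c_\E \bpE \in \Xn$ is continuous, so the evenness identity on each internal edge forces equal coefficients on adjacent elements, whence $c_\E \equiv c$ on the (connected) mesh; subtracting $c$ times the relation just found leaves a vanishing combination of the linearly independent conforming modal basis (cf.~\cite{Szabo-Babuska:1991}), so every remaining coefficient is zero. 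Hence $\dim \Vnex$ equals the number of generators in~\eqref{explicit-CR:even} minus one.

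It remains to compare dimensions. As in~\eqref{dim:Vnim}, $\dim \Vnim = \dim(\Pbb_\p(\E))\,\numel - \p\,\numedi$, while counting the generators of~\eqref{explicit-CR:even} and subtracting one gives $\dim \Vnex = \numver + (\p-1)\numedt + \numel + \tfrac{(\p-1)(\p-2)}{2}\numel - 1$. Using~\eqref{euler-1} together with the Euler relation $\numver - \numedt + \numel = 1$, a computation in the spirit of the one closing the odd case---relying on $(\p+1)(\p+2)-(\p-1)(\p-2)=6\p$---shows these two numbers coincide; with $\Vnex \subseteq \Vnim$ from the first step, this proves $\Vnex = \Vnim$. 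The main obstacle is the middle step: upgrading ``there is a linear dependence'' to ``the dependencies form a one-dimensional space'', for which the evenness of $L_\p$ across shared edges and the independence of the conforming modal basis are the decisive ingredients.
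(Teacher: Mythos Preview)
Your proof is correct and follows essentially the same route as the paper: inclusion $\Vnex\subseteq\Vnim$ via the evenness of $L_\p$ on edges, identification of the single relation $\sum_{\E}\bpE\in\Xn$ (the paper phrases this as $\dim(\Xn\cap\Bn)=1$), and a dimension count based on the two Euler formulas. The only cosmetic difference is that the paper packages the count through $\dim(\Xn+\Bn)=\dim\Xn+\dim\Bn-\dim(\Xn\cap\Bn)$, whereas you argue directly that the dependency space among the generators is one-dimensional; the underlying connectivity argument is identical.
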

\begin{proof}
Introduce $\Bn := \Span\{\bpE \mid \E \in \taun\}$.
Let $\numedt$, $\numedi$, $\numver$, and $\numel$ denote the number
of total edges, internal edges, vertices, and elements of~$\taun$, respectively.
We split the proof in two steps.

\paragraph*{$\Vnex$ is contained in~$\Vnim$.}
The space $\Vnex$ is defined as $\Xn + \Bn$, i.e.,
it consists of modal basis functions,
which are $H^1$ conforming and therefore contained in $\Vnim$,
and elemental nonconforming bubbles as in \eqref{nc-elemental:bubble}.
Such bubble functions have vanishing jump moments
with respect to polynomials of order~$\p-1$
over the edges in~$\Fcaln$.

To see this,
we fix a generic~$\F$ in~$\FcalnI$
and one of the two elements~$\E$ in~$\omegaF$.
The restriction of~$\bpE$ on~$\F$
is given by the sum of four terms,
see~\eqref{nc-elemental:bubble}:
two of them are (scaled) univariate Legendre
polynomials of degree~$\p$,
which have vanishing moments up to order~$\p-1$;
the sum of the other two vanishes
since the restriction of the remaining
Legendre polynomial over~$\F$ is equal to~$1$.

\paragraph*{$\Vnim$ and~$\Vnex$ have the same dimension.}
As shown in the proof of Lemma~\ref{lemma:equivalent-definition-spaces:odd},
the dimension of~$\Vnim$ is given by
\[
\dim( \Vnim ) 
= \dim (\Pbbtaun) - \p \numedi
= \dim (\Pbb_{\p}(\E)) \numel - \p \numedi.
\]
As for the dimension of~$\Vnex$,
we start by investigating the dimensions of~$\Xn$
and~$\Bn$, separately.
The former is given by the number of total
modal basis functions:
\[
\dim( \Xn )
= \dim (\Pbb_{\p-3}(\E)) \numel
  + \dim (\Pbb_{\p-2}(\F)) \numedt
  + \numver;
\]
the latter by the number of elements:
\[
\dim( \Bn ) = \numel.
\]
Based on the Euler's formulas~\eqref{euler-1} and $\numel-\numedt+\numver = 1$,
the dimensions of $\Vnim$, $\Xn$, and~$\Bn$
satisfy the following relation:
\begin{equation} \label{relation:dim}
\dim(\Xn) + \dim(\Bn) = \dim(\Vnim) + 1.
\end{equation}
As elemental bubbles of neighbouring elements
have the same restriction on the interface,
we have 
\begin{equation} \label{dim-intersection}
\Xn \cap \Bn =
\Span \Big\{\sum_{\E\in\taun} \bpE \Big\},
\qquad\qquad\qquad
\dim(\Xn\cap\Bn) =1.
\end{equation}
We deduce
\[
\begin{split}
\dim (\Vnim) + 1
& \overset{\eqref{relation:dim}}{=}
        \dim(\Xn) + \dim(\Bn)
= \dim (\Xn + \Bn) + \dim (\Xn \cap \Bn) 
\overset{\eqref{dim-intersection}}{=} \dim (\Vnex) + 1.
\end{split}
\]
\end{proof}

\begin{remark} \label{remark:explicit-basis}
A consequence of Lemma~\ref{lemma:equivalent-definition-spaces:even}
is that an explicit basis for~$\Vnim$ and~$\Vnex$
is given by the generating functions 
in~\eqref{explicit-CR:even},
after removing one elemental nonconforming bubble;
the corresponding degrees of freedom are the coefficients
in the expansion with respect to that basis.
Differently from the case~$\p$ odd, we have not at disposal
a set of degrees of freedom as those in~\eqref{dof-moment:odd}.
As such, it is not apparent how to derive
interpolation estimates with respect
to proposed type of basis.
In Section~\ref{section:CR-even} below,
we shall propose an alternative CR construction,
which allows us to circumvent this issue.
\end{remark}

\subsection{Convergence for standard CR elements} \label{subsection:convergence-standard-CR}
As we proved that the implicit and explicit
CR spaces are the same,
see Lemmas~\ref{lemma:equivalent-definition-spaces:odd}
and~\ref{lemma:equivalent-definition-spaces:even},
we henceforth write~$\Vn$
to denote either of the two.
Moreover, given $g$ in $L^1(\partial \Omega)$,
we write
\[
\Vng := \Hsncgtaunom{1}{\p} \cap \Vn. 
\]
Define the bilinear form $\an: [\Hone + \Vn]^2 \to \Rbb$ as
\begin{equation}\label{broken-bilienarform}
    \anparent{\vn}{\wn}
    :=
    (\gradh \vn, \gradh \wn)_{0,\Omega}.
\end{equation}
We review~\cite{Brenner:2015} the standard analysis
of the following method:
\begin{equation} \label{method-standard}
\begin{cases}
\text{find } \un \in \Vnzero \text{ such that}\\ 
\anparent{\un}{\vn}  =  ( f,\vn )_{0,\Omega} & \forall \vn \in \Vnzero.
\end{cases}
\end{equation}
The treatment of inhomogeneous Dirichlet boundary conditions is discussed
in Section~\ref{section:numerics} below.

The bilinear form in~\eqref{broken-bilienarform}
is coercive and continuous with constants~$1$
with respect to the broken seminorm in~\eqref{broken:sn-n};
in turn, that seminorm is a norm on~$\Vnzero$ due to~\eqref{Poincare-Friedrichs}.
Therefore, method~\eqref{method-standard} is well-posed
and the following convergence result holds true;
its proof is standard~\cite{Brenner:2015}.
We report some details as they will be needed
in the analysis of new CR-type methods,
see Section~\ref{section:CR-even}.

\begin{proposition} \label{proposition:standard-convergence}
Let~$u$ and~$\un$ be the solutions to~\eqref{weak:form} and~\eqref{method-standard}.
Then, we have
\begin{equation}\label{strang}
    \SemiNorm{u-\un}_{1,\h; \Omega}
    \le
    \inf_{\vn \in \Vnzero} \SemiNorm{u-\vn}_{1,\h; \Omega} 
    + \sup_{\vn \in \Vnzero} \frac{|\anparent{u}{\vn}-( f,\vn )_{0,\Omega}|}{\SemiNorm{\vn}_{1,\h; \Omega}}.
\end{equation}
If~$u$ belongs to $H^{s} (\Omega)$,
$s$ larger than~$3/2$,
and~$\p$ is smaller than or equal to~$s$,
then we have
\begin{equation} \label{standard-convergence}
\SemiNorm{u-\un}_{1,\h; \Omega}
\lesssim \h^\p \SemiNorm{u}_{\p+1,\Omega}. 
\end{equation}
\end{proposition}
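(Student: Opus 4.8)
The plan is to combine the abstract nonconforming error identity (the second Strang lemma), which is available because $\an$ is bounded and coercive with unit constants in $\SemiNorm{\cdot}\hoOm$, with two ingredients: a classical polynomial approximation estimate for the first term on the right-hand side of~\eqref{strang}, and a consistency (``variational crime'') estimate for the second. I would first establish~\eqref{strang}. Since, by~\eqref{Poincare-Friedrichs}, $\SemiNorm{\cdot}\hoOm$ is a norm on $\Vnzero$, for every $\vn\in\Vnzero$ we have, using~\eqref{method-standard} and $\un-\vn\in\Vnzero$,
\begin{equation*}
\SemiNorm{\un-\vn}\hoOm^2
= \anparent{\un-\vn}{\un-\vn}
= \left[ ( f,\un-\vn )_{0,\Omega} - \anparent{u}{\un-\vn} \right] + \anparent{u-\vn}{\un-\vn}.
\end{equation*}
Dividing by $\SemiNorm{\un-\vn}\hoOm$, bounding the bracket by the supremum in~\eqref{strang} and the last term by $\SemiNorm{u-\vn}\hoOm$ (continuity of $\an$ with constant $1$), then using $\SemiNorm{u-\un}\hoOm\le\SemiNorm{u-\vn}\hoOm+\SemiNorm{\un-\vn}\hoOm$ and passing to the infimum over $\vn\in\Vnzero$, gives~\eqref{strang}.

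For the approximation term I would exploit that $\Xn\cap H^1_0(\Omega)\subseteq\Vnzero$: indeed $\Xn\subseteq\Vnim=\Vn$ (Lemmas~\ref{lemma:equivalent-definition-spaces:odd} and~\ref{lemma:equivalent-definition-spaces:even}), and a conforming function with zero trace has vanishing jumps on interior and boundary edges, hence satisfies the orthogonality conditions defining the space $H^{1,nc}_{\p,0}(\taun,\Omega)$. Consequently $\inf_{\vn\in\Vnzero}\SemiNorm{u-\vn}\hoOm$ is bounded by the best conforming approximation of $u$, for which classical finite element approximation theory (Bramble--Hilbert, using the stated regularity of $u$) delivers $\lesssim\h^\p\SemiNorm{u}_{\p+1,\Omega}$.

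For the consistency term I would fix $\vn\in\Vnzero$ and integrate by parts elementwise, using $-\Delta u=f$ and regrouping the element-boundary contributions into jumps across each edge; this is legitimate because $u\in H^s(\Omega)$ with $s>3/2$ makes $\nabla u$ single-valued with $L^2$ traces on the edges. One obtains
\begin{equation*}
\anparent{u}{\vn} = ( f,\vn )_{0,\Omega} + \sum_{\F\in\Fcaln}\int_\F \partialnF u\,\jump{\vn}.
\end{equation*}
Since $\vn\in\Vnzero$, the jump $\jump{\vn}$ is $L^2(\F)$-orthogonal to $\Pbb_{\p-1}(\F)$ on every $\F$, so $\partialnF u$ may be replaced by $\partialnF u-\qpmoF$, with $\qpmoF$ its $L^2(\F)$-projection onto $\Pbb_{\p-1}(\F)$, and each term estimated by Cauchy--Schwarz. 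A trace inequality and Bramble--Hilbert on the patch $\omegaF$ give $\Norm{\partialnF u-\qpmoF}_{0,\F}\lesssim\hF^{\p-1/2}\SemiNorm{u}_{\p+1,\omegaF}$, while, $\jump{\vn}$ having zero mean on $\F$, a trace/scaling argument (subtracting a suitable constant on $\omegaF$ and using a local Poincaré inequality) gives $\Norm{\jump{\vn}}_{0,\F}\lesssim\hF^{1/2}\SemiNorm{\vn}_{1,\h;\omegaF}$. Summing over edges with shape-regularity~\eqref{shape-regularity}, a discrete Cauchy--Schwarz inequality and the finite overlap of the patches yields $|\anparent{u}{\vn}-( f,\vn )_{0,\Omega}|\lesssim\h^\p\SemiNorm{u}_{\p+1,\Omega}\SemiNorm{\vn}\hoOm$, hence the consistency term in~\eqref{strang} is $\lesssim\h^\p\SemiNorm{u}_{\p+1,\Omega}$; combined with the approximation bound, this proves~\eqref{standard-convergence}.

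I expect the consistency estimate to be the main obstacle: it is the core of the nonconforming analysis and requires carefully using the moment orthogonality encoded in $\Vnzero$ together with the edge-to-element trace and scaling inequalities that convert $L^2(\F)$-quantities into broken $H^1$ quantities on patches — which is also where all the geometric constants (and the dependence on $\shapereg$) enter. The remaining steps (the algebra of the Strang identity and the conforming approximation estimate) are routine.
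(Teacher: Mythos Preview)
Your proposal is correct and follows essentially the same route as the paper: the second Strang lemma for~\eqref{strang}, conforming approximation for the best-approximation term, and elementwise integration by parts followed by inserting the $L^2(\F)$-projection onto $\Pbb_{\p-1}(\F)$ (using the jump orthogonality in $\Vnzero$) together with trace/Poincar\'e bounds for the consistency term. The only cosmetic difference is that the paper cites Strang's lemma while you re-derive it (your triangle-inequality argument yields a factor~$2$ in front of the infimum rather than~$1$, which is immaterial for~\eqref{standard-convergence}).
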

\begin{proof}
Inequality~\eqref{strang} is second Strang's lemma~\cite{Strang:1972,Brenner:2015}.  
As for estimate~\eqref{standard-convergence}, we bound the two terms
on the right-hand side of~\eqref{strang} separately.

As for the first one,
the best error in CR spaces
is smaller than the best error in conforming Lagrangian spaces:
the convergence follows from standard
polynomial interpolation theory~\cite{Brenner-Scott:2008};
the $H^s(\Omega)$ regularity, $s$ larger than $3/2$,
suffices to this purpose.

As for the second term, for all~$\F$ in~$\Fcaln$,
introduce $\PiltwoF{\p-1}$ as the orthogonal projector
with respect to the $L^2(\F)$ inner product
into the space~$\Pbb_{\p-1}(\F)$ of polynomial vector fields.
Denote the elements in~$\omegaF$ by~$\E_i$, $i=1,2$.
An integration by parts, definitions~\eqref{eq:jumps} and~\eqref{nc-Sobolev},
formulation~\eqref{strong-pois},
the Cauchy--Schwarz inequality,
and the assumption that $u$ belongs to $H^{s}(\Omega)$, $s$ larger than~$3/2$, yield,
for all $\qpEo$, $\qpEtw$, $\qzEo$, and $\qzEtw$
in $\Pbb_{\p}(\E_1)$, $\Pbb_{\p}(\E_2)$, $\Pbb_{0}(\E_1)$, and~$\Pbb_{0}(\E_2)$,
\begin{equation}\label{standard-convergence:1}
\begin{split}
\anparent{u}{\vn} -(f,\vn)_{0,\Omega}
& =
\sum_{\E \in \taun} (\normal{\E}\cdot\nabla u ,\vn)_{0,\partial\E} 
= \sum_{\F \in \Fcaln} (\normal{\F}\cdot \nabla u, \jump{\vn})_{0,\F} \\
&= \sum_{\F \in \Fcaln} \left(\normal{\F}\cdot \nabla u 
                - \PiltwoF{p-1} \normal{\F}\cdot \nabla u,
        \jump{\vn} - \PiltwoF{p-1} \jump{\vn} \right)_{0,\F}. 
\end{split}                                
\end{equation}
\normalsize
For interior edges, we have
\[
\begin{split}
& \left(\normal{\F}\cdot \nabla u 
                - \PiltwoF{p-1} \normal{\F}\cdot \nabla u,
        \jump{\vn} - \PiltwoF{p-1} \jump{\vn} \right)_{0,\F} \\
& \le 
    \Norm{\frac12 \normal{\E_1}\cdot \nabla (u_{|\E_1} - \qpEo)
          - \frac12 \normal{\E_2}\cdot \nabla (u_{|\E_2} - \qpEtw)}_{0,\F}
    \Norm{\frac12 (\vn{}_{|\E_1} - \qzEo) + \frac12 (\vn{}_{|\E_2} - \qzEtw)}_{0,\F}.
\end{split}
\]
The multiplicative trace inequality, the Poincar\'e inequality,
and choosing the polynomial vector fields above
as suitable elementwise $L^2$ projections
entail that the two terms on the right-hand side of~\eqref{standard-convergence:1}
satisfy, for $i=1,2$ and positive~$\varepsilon$,
\[
\begin{split}
\Norm{\normal{\E_i}\cdot \nabla (u_{|\E_1} - \qpEi)}_{0,\F} 
&\lesssim \hF^{-\frac12} \Norm{\gradh(u-\qpEi)}_{0,\Ei}
     + \hF^{\varepsilon}\SemiNorm{u-\qpEi}_{\frac32+\varepsilon,\Ei} ; \\
\Norm{\vn{}_{|\E_i} - \qzEi}_{0,\F}
&\lesssim \hF^{-\frac12} \Norm{\vn - \qzEi}_{0,\Ei}
            + \hF^{\frac12} \SemiNorm{\vn}_{1,\Ei}
    \lesssim h^{\frac12} \SemiNorm{\vn}_{1,\Ei}.
\end{split}
\]
Analogous estimates can be proven for boundary edges.
Combining the above displays with~\eqref{standard-convergence:1},
and standard polynomial approximation results yield the assertion.
\end{proof}

\begin{remark} \label{remark:interpolation}
For odd degree~$\p$,
given~$v$ in $W^{1,1}(\Omega)$,
we can define~$\vI$ in~$\Vn$
by fixing its degrees of freedom~\eqref{dof-moment:odd}
equal to those of~$v$.
A standard Bramble-Hilbert argument
entails optimal interpolation estimates
under regularity assumptions
that are milder than those needed for pointwise interpolation operators
as used in Lemma~\ref{proposition:standard-convergence}.
\end{remark}

\begin{remark}
\label{remark:shortcomings}
One of the most striking features of CR elements
is that the structure of the explicit
basis functions
is very different in the odd and even degree cases;
see~\eqref{explicit-CR:odd} and~\eqref{explicit-CR:even}.
On the one hand, in the light of Remark~\ref{remark:interpolation},
this renders the derivation of interpolation estimates
in the $\p$-version of the method less immediate
than for conforming and DG methods.
On the other hand,
in the assembling process of the matrix stemming from method~\eqref{method-standard}
for increasing~$\p$,
it is not feasible to use the same indexing strategy for even and odd degrees
since the types of basis functions under consideration change.
In the odd case, modal vertex functions are not considered
and edge nonconforming bubble functions are used;
in the even case, modal vertex functions are considered
and elemental nonconforming bubble functions are used.
\end{remark}

\section{New Crouzeix-Raviart elements of even degree} \label{section:CR-even}
Here, we propose a different construction of even degree CR elements,
which allows for circumventing some of the shortcomings discussed in
Remark~\ref{remark:shortcomings}.
In particular, in Section~\ref{subsection:new-CR-spaces},
we define a new implicit CR-type space of even degree;
show an explicit construction of a corresponding
spanning set of functions;
identify a set of unisolvent DoFs
analogous to that introduced for the standard odd case.
Moreover, in Section~\ref{subsection:optimal-convergence},
we propose a stabilized, arbitrary order CR-type method,
which is suitable for $\h\p$-refinements,
and prove its optimal convergence.

\subsection{edge-bubble-type even degree Crouzeix-Raviart spaces} \label{subsection:new-CR-spaces}
Let~$\p$ be an even positive integer.
We define a modification
of the Sobolev nonconforming space in~\eqref{nc-Sobolev}
as follows:
\[
    \Htildesnctaunom{1}{\p} := \left\{ \vtilde \in H^1(\taun,\Omega)  \,\Big | \,
    (\jump{\vtilde},L^\F_j )_{0,\F} = 0 \quad
    \forall j = 0,\dots,\p-2,\p, \quad
    \forall \F\in\FcalnI  \right\}.
\]
Based on this, we introduce a variant of the space~$\Vnim$ in~\eqref{implicit-CR}:
\begin{equation}\label{new:implicit-CR}
\Vnimtilde := \left\{ \vtilden \in \Htildesnctaunom{1}{\p}
    \,\Big | \, \vtilden{}_{|\E} \in \Pbb_\p(\E) 
    \quad \forall \E\in\taun \right\}.
\end{equation}
This space does not coincide with~$\Vnim$:
on an edge~$\F$ in $\facets_I$,
the jump of $\vtilden$ in $\Vnimtilde$ is orthogonal 
with respect to polynomials in $\Pbb_{\p-2}(\F)$
and the $\p$-th Legendre polynomial $L_\p^\F$,
instead of being orthogonal
with respect to $\Pbb_{\p-1}(\F)$.

Under the notation in Section~\ref{subsubsection:CR-standard-odd},
we introduce a set of linear functionals similar to those in \eqref{dof-moment:odd}:
on each~$\E$, given~$\vtilde$ in~$W^{1,1}(\E)$,
we consider
\begin{subequations}  \label{dof-moment:new}
\begin{align}
    &|\F|^{-1} (\vtilde, \LjF)_{0,\F}
    & \forall j = 0, \dots\p-2,\p,\;  \quad\forall \F\in \FcalE; \label{dof-moment-edge:new}\\
    & |\E|^{-1} (\vtilde, \mboldbetaE)_{0,\E}
    &     \forall \vert \boldbeta\vert = 0, \dots, \p-3 \label{dof-moment-bulk:new}.
\end{align}
\end{subequations}

\begin{lemma}
The linear functional \eqref{dof-moment-edge:new}-\eqref{dof-moment-bulk:new} are a set of unisolvent DoFs for~$\Pbb_{\p}(\E)$.
\end{lemma}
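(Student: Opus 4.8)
The plan is to mimic the proof of Lemma~\ref{lemma:unisolv-odd}, the decisive point being that for even~$\p$ the Legendre degree \emph{missing} from the edge functionals~\eqref{dof-moment-edge:new} is the \emph{odd} degree~$\p-1$, which is exactly what is needed to rerun the parity argument. First I would check that the number of functionals equals $\dim(\Pbb_\p(\E))$: each edge carries $(\p-1)+1=\p$ functionals from~\eqref{dof-moment-edge:new} (the degrees $0,\dots,\p-2$ together with $\p$), hence $3\p$ in total, plus $\dim(\Pbb_{\p-3}(\E))=(\p-1)(\p-2)/2$ from~\eqref{dof-moment-bulk:new}; since $(\p+1)(\p+2)/2-(\p-1)(\p-2)/2=3\p$, the counts agree. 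Therefore it suffices to prove that a polynomial $\vn\in\Pbb_\p(\E)$ annihilating all the functionals~\eqref{dof-moment:new} is identically zero.

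Next I would analyse the trace of $\vn$ on each edge. Fix $\F\in\FcalE$ and expand $\vn{}_{|\F}$ in the $L^2(\F)$-orthogonal Legendre basis $\{\LjF\}_{j=0}^{\p}$ of $\Pbb_\p(\F)$. Vanishing of the functionals~\eqref{dof-moment-edge:new} for $j=0,\dots,\p-2$ and $j=\p$ kills every Legendre coefficient of $\vn{}_{|\F}$ except possibly the $(\p-1)$-th one, so $\vn{}_{|\F}=c_\F\,L_{\p-1}^\F$ for some $c_\F\in\Rbb$. Since $\p$ is even, $\p-1$ is odd, so $L_{\p-1}(-1)=-1$ and $L_{\p-1}(1)=1$; together with continuity of $\vn{}_{|\partial\E}$ at the three vertices of~$\E$, this gives the chain $\vn(\nu_1)=-\vn(\nu_2)=\vn(\nu_3)=-\vn(\nu_1)$, exactly as in~\eqref{trick-odd-case}, whence $\vn$ vanishes at all vertices of~$\E$. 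Because $L_{\p-1}(\pm1)\neq0$, this forces $c_\F=0$ for every $\F\in\FcalE$, so $\vn$ vanishes on $\partial\E$ and factorises as $\vn=\lambda_1\lambda_2\lambda_3\,q_{\p-3}^\E$ with $q_{\p-3}^\E\in\Pbb_{\p-3}(\E)$.

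Finally I would invoke the bulk functionals. As $\{\mboldbetaE\}_{|\boldbeta|=0}^{\p-3}$ is a basis of $\Pbb_{\p-3}(\E)$ and all moments~\eqref{dof-moment-bulk:new} of $\vn$ vanish, taking the linear combination reproducing $q_{\p-3}^\E$ yields $(\lambda_1\lambda_2\lambda_3\,q_{\p-3}^\E,\,q_{\p-3}^\E)_{0,\E}=0$; positivity of $\lambda_1\lambda_2\lambda_3$ in the interior of~$\E$ then forces $q_{\p-3}^\E=0$, i.e.\ $\vn=0$. I expect the only genuinely delicate point to be this edge-and-vertex step, namely verifying that dropping $L_{\p-1}^\F$ rather than $L_\p^\F$ from the edge moments is precisely what restores the parity needed to run the argument of Lemma~\ref{lemma:unisolv-odd} for even~$\p$ (in contrast with~\eqref{trick-even-case}); the dimension count and the bubble factorisation are routine.
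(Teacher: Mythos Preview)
Your proof is correct and follows essentially the same approach as the paper's own argument: reduce to injectivity via a dimension count, use the edge moments to force $\vn{}_{|\F}=c_\F L_{\p-1}^\F$, exploit the odd parity of $L_{\p-1}$ together with continuity at the vertices to obtain the chain $\vn(\nu_1)=-\vn(\nu_2)=\vn(\nu_3)=-\vn(\nu_1)$, conclude $\vn{}_{|\partial\E}=0$, and finish with the bubble factorisation and the bulk moments. Your write-up is in fact more explicit than the paper's (which simply points back to Lemma~\ref{lemma:unisolv-odd} for the final step), but the substance is identical.
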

\begin{proof}
The proof follows along the same lines as that
of Lemma~\ref{lemma:unisolv-odd}.
Let $\vtilden$ be in $\Pbb_p(\E)$ such that the moments \eqref{dof-moment-edge:new}-\eqref{dof-moment-bulk:new} are zero.
The number of linear functionals in \eqref{dof-moment-edge:new}-\eqref{dof-moment-bulk:new}
is equal to $\dim ( \Pbb_p(\E))$;
therefore, it suffices to prove that $\vtilden$ is identically zero over $\E$.

Given a edge~$\F$ in $\FcalE$,
using that the edge moments \eqref{dof-moment-edge:new} vanish,
we deduce that ${\vtilden}{}_{|\F}$ belongs to $\Pbb_p(\F)$ and 
is orthogonal to $\Span\{\Pbb_{\p-2}(\F), L_\p^\F \}$.
Hence, there exists a constant $c$ in $\Rbb$ such that
$\vtilden{}_{| \F}(\xi) = c \, L_{p-1}^\F(\xi)$
for all $\F$ in $\FcalE$.
Since $p$ is even, $L_{p-1}^\F(\xi)$ is odd symmetric with
respect to the midpoint of~$\F$,
and $\vtilden{}_{|\partial \E}$ belongs to $\mathcal{C}^0(\partial \E)$,
then the following identities are valid:
\begin{equation} \label{crucial-step-even-order}
   \vtilden(\nu_1) = -\vtilden(\nu_2) = \vtilden(\nu_3) = -\vtilden(\nu_1),
\end{equation}
whence $\vtilden$ is zero over~$\partial\E$.
Using that the elemental moments are zero,
the assertion follows as in the proof of Lemma~\ref{lemma:unisolv-odd}.
\end{proof}

Following the construction of~$\Vnex$
for odd~$\p$ in~\eqref{explicit-CR:odd},
under the notation in Section~\ref{subsubsection:CR-standard-odd}, we define
\begin{equation}\label{new:explicit-CR}
    \Vnextilde := \Span\left(                
                \bpmoF \text{ and }  \{\phijF\}_{j=1}^{\p-1}\quad
                \forall \F \subset \Fcaln ;\quad
                \{\phiellE\}_{\ell=1}^{(\p-1)(\p-2)/2} 
                \quad \forall \E \in \taun                 \right).
\end{equation}

\begin{lemma} \label{lemma:Vextilde=Vimtilde}
The spaces~$\Vnimtilde$ and~$\Vnextilde$ defined
in~\eqref{new:implicit-CR} and~\eqref{new:explicit-CR}
coincide for even~$p$.
\end{lemma}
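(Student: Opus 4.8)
The plan is to mimic exactly the two-step argument used in Lemma~\ref{lemma:equivalent-definition-spaces:odd}, checking that the edge nonconforming bubble $\bpmoF$ of degree $\p-1$ lands in the modified space $\Vnimtilde$ and then matching dimensions. First I would show $\Vnextilde \subseteq \Vnimtilde$. The modal basis functions in~\eqref{new:explicit-CR} are $H^1$-conforming, hence have vanishing jumps on every edge and therefore trivially satisfy the orthogonality conditions defining $\Htildesnctaunom{1}{\p}$. The only thing to verify is that each $\bpmoF$ has jump orthogonal to $\Pbb_{\p-2}(\F)$ \emph{and} to $L_\p^\F$ on every internal edge. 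Across $\F$ itself the bubble is continuous on $\omegaF$, so the jump vanishes identically. On an edge $\widetilde \F$ on $\partial\omegaF$, the restriction $\bpmoF{}_{|\widetilde\F}$ is (a shift of) the univariate Legendre polynomial $L_{\p-1}$ starting from the vertex opposite $\F$; being a polynomial of degree $\p-1$, it is automatically orthogonal to $L_\p^\F$ on $\widetilde\F$, and its moments against $\Pbb_{\p-2}(\widetilde\F)$ vanish by the orthogonality of Legendre polynomials. On all other mesh edges the bubble is zero. So $\bpmoF \in \Vnimtilde$, and the first inclusion follows.

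Next I would count dimensions. For $\Vnimtilde$, the argument is identical to~\eqref{dim:Vnim}: it is $\dim(\Pbbtaun)$ minus the number of constraints per internal edge, which is again $\p$ (the $\p-1$ conditions $j=0,\dots,\p-2$ plus the single condition $j=\p$), so
\begin{equation*}
\dim(\Vnimtilde) = \dim(\Pbb_\p(\E))\,\numel - \p\,\numedi .
\end{equation*}
For $\Vnextilde$, the generating set is structurally the \emph{same} as in the odd case~\eqref{explicit-CR:odd} — one nonconforming edge bubble and $\p-1$ modal edge functions per edge, plus $(\p-1)(\p-2)/2$ modal elemental bubbles per element, and no vertex functions — so its dimension is at most
\begin{equation*}
\numedt + \dim(\Pbb_{\p-2}(\F))\,\numedt + \dim(\Pbb_{\p-3}(\E))\,\numel = \p\,\numedt + \dim(\Pbb_{\p-3}(\E))\,\numel ,
\end{equation*}
with equality provided the generating functions are linearly independent. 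Subtracting the two expressions and invoking Euler's formula~\eqref{euler-1} gives $\dim(\Vnextilde) = \dim(\Vnimtilde)$ exactly as at the end of the proof of Lemma~\ref{lemma:equivalent-definition-spaces:odd}. Combined with the inclusion, this forces $\Vnextilde = \Vnimtilde$.

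The one point that is not purely a transcription of the odd-degree proof is the linear independence of the spanning set in~\eqref{new:explicit-CR}, which I expect to be the main obstacle: it is what guarantees the dimension count for $\Vnextilde$ is an equality rather than an inequality, and it is the crucial difference from the standard even-degree construction~\eqref{explicit-CR:even}, where one elemental bubble is genuinely redundant. I would argue that the modal functions are linearly independent among themselves (standard), that each $\bpmoF$ is not $H^1$-conforming and is supported on $\omegaF$ with a nonzero trace on $\F$ that is orthogonal to all modal edge traces there, and that a bubble on one edge cannot be produced by bubbles on other edges (distinct supports / distinct nonzero traces). This mirrors the linear-independence discussion cited for~\eqref{explicit-CR:odd} in \cite{Carstensen-Sauter:2022,Ciarlet-Ciarlet-Sauter-Simian:2016,Stoyan-Baran:2006}, and no analogue of the Fortin--Soulie redundancy arises precisely because we use edge bubbles of degree $\p-1$ rather than elemental bubbles of degree $\p$.
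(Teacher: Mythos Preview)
Your proposal is correct and follows exactly the approach the paper intends: the paper's entire proof is the one line ``The proof is exactly the same as that of Lemma~\ref{lemma:equivalent-definition-spaces:odd},'' and you have carried out precisely that transcription, correctly identifying the two small modifications (the restriction of $\bpmoF$ to $\widetilde\F\subset\partial\omegaF$ is now a degree $\p-1$ Legendre polynomial, hence orthogonal both to $\Pbb_{\p-2}(\widetilde\F)$ and to $L_\p^{\widetilde\F}$; and the count of jump constraints per internal edge is still $\p$, namely $j=0,\dots,\p-2,\p$). Your remark that linear independence of the generating set is the only step not literally copied from the odd case is accurate, and your sketch of it is adequate; the paper itself does not spell this out here either, tacitly relying on the same references cited for~\eqref{explicit-CR:odd}.
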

\begin{proof}
The proof is exactly the same as that
of Lemma~\ref{lemma:equivalent-definition-spaces:odd}.
\end{proof}

Since the spaces~$\Vnimtilde$ and~$\Vnextilde$ coincide,
we shall write~$\Vtilden$ to denote either of the two;
with an abuse of notation, for the odd degree case,
the spaces~$\Vnim$ and~$\Vnex$
will be henceforth denoted
also by~$\Vnimtilde$ and~$\Vnextilde$.
Given~$g$ in $L^1(\partial \Omega)$, we define
$\Vtildeng$ as $\Htildesncgtaunom{1}{\p} \cap \Vtilden$, where
\footnotesize
\begin{equation} \label{nc-Sobolev-tilde}
\Htildesncgtaunom{1}{\p}
:= \Big\{ \vtilden \in \Htildesnctaunom{1}{\p} \Big |
            (\jump{\vtilden},L^\F_j)_{0,\F} = ( g, L^\F_j)_{0,\F} \quad
            \forall j = 0,\dots,\p-2,\p, \;
                \forall \F\in\FcalnB  \Big\}.
\end{equation}\normalsize

\subsection{Optimal convergence for a stabilized CR method}
\label{subsection:optimal-convergence}
If we replace~$\Vn$ in~\eqref{method-standard}
by~$\Vtilden$ in the case~$\p$ even,
then one may expect method~\eqref{method-standard} to be one order suboptimal.
In fact, proceeding as in the proof of
Lemma~\ref{proposition:standard-convergence},
we rewrite display~\eqref{standard-convergence:1}
as follows:
\[
\begin{split}
\anparent{u}{\vtilden} -(f,\vtilden) 
&= \sum_{\E \in \taun} (\normal{\E} \cdot \nabla u,\vtilden)_{0,\partial \E}
= \sum_{\F \in \Fcaln} (\normal{\F} \cdot  \nabla u , \jump{\vtilden})_{0,\F} \\
&= \sum_{\F \in \Fcaln} \left(\normal{\F} \cdot  \nabla u 
    - \PiltwoF{p-2}( \normal{\F} \cdot  \nabla u ),
    \jump{\vtilden} - \PiltwoF{0} \jump{\vtilden} \right)_{0, \F},
\end{split}                                
\]
which delivers an order of convergence
$\mathcal O(\h^{\p-1})$
due to the presence of $\PiltwoF{\p-2}$
instead of~$\PiltwoF{\p-1}$.

As a matter of notation,
we define the operator~$\PiLpmoF$
as zero if~$\p$ is odd
and as the $L^2(\F)$ projector
onto the Legendre polynomial~$\Lpmo^\F$ if~$\p$ is even.
In order to recover error estimates of order $\mathcal O(\h^{\p})$,
we modify method~\eqref{method-standard}
by employing the discrete bilinear form $\atilden: [\Hone + \Vtilden]^2 \to \Rbb$ defined as follows:
for a sufficiently large positive coefficient~$\eta$,
see Proposition~\ref{proposition:convergence-stabilized-method} below,
\footnotesize
\begin{equation} \label{atilde}
\begin{split}
\antildeparent{\utilden}{\vtilden} 
&:=(\gradh \utilden, \gradh \vtilden)_{0,\Omega}
    - \sum_{\F \in \Fcaln} 
    \left(\PiLpmoF \nbfF \cdot \average{\gradh \utilden}, \PiLpmoF \jump{\vtilden}\right)_{0,\F}  \\
&  - \sum_{\F \in \Fcaln} \left( \PiLpmoF \jump{\utilden}, \PiLpmoF \nbfF \cdot\average{\gradh \vtilden}\right)_{0,\F}
    + \eta \sum_{\F \in \Fcaln } \hF^{-1} 
        \left(\PiLpmoF \jump{\utilden}, \PiLpmoF \jump{\vtilden}\right)_{0,\F}.
\end{split}
\end{equation}
\normalsize
Introduce the mesh-dependent norm
\begin{equation} \label{discrete-norm}
\Normthreebars{\vtilden}^2
:= \SemiNorm{\vtilden}^2_{1,\h;\Omega} 
+ \sum_{\F \in \Fcaln} \hF^{-1} 
        \Norm{\PiLpmoF\jump{\vtilden}}^2_{0,\F}.
\end{equation}
We consider and analyze the following method:
\begin{equation} \label{method-stabilized}
\begin{cases}
    \text{find } \utilden \in \Vtildenzero \text{ such that}\\ 
    \antildeparent{\utilden}{\vtilden}
    =    ( f,\vtilden)_{0,\Omega}
    \qquad\qquad  \forall \vtilden \in \Vtildenzero.
    \end{cases}
\end{equation}
For even~$\p$, the bilinear form~$\atilden$ in~$\eqref{atilde}$
resembles that in standard symmetric interior penalty DG methods~\cite{DiPietro-Ern:2012},
yet requires a stabilization
on a one dimensional space for each edge;
for odd~$\p$, the bilinear form~$\atilden$ instead
coincides with that in~\eqref{broken-bilienarform}.
Comments on an alternative construction
for even degree CR methods
are given in Remark~\ref{remark:alternative-even-fixed} below;
in particular the edge nonconforming bubbles
used in~\eqref{new:explicit-CR}
and the corresponding projections in
\eqref{atilde}
can be of any odd (also linear!) degree
as the crucial point in the unisolvence proof
is the validity of~\eqref{crucial-step-even-order}.

The following well-posedness and convergence result for~\eqref{method-stabilized} holds true.
\begin{proposition} \label{proposition:convergence-stabilized-method}
If~$\eta$ in~\eqref{atilde} is sufficiently large,
then method~\ref{method-stabilized} is well-posed.
Given~$u$ and~$\utilden$ be the solutions to~\eqref{weak:form} and~\eqref{method-stabilized},
we have
\begin{equation}\label{strang:on-tilde}
\Normthreebars{u-\utilden}
\lesssim
\inf_{\vtilden \in \Vtildenzero} \Normthreebars{u-\vtilden} 
+ \sup_{\vtilden \in \Vtildenzero} 
    \frac{|\antildeparent{u}{\vtilden}
    -( f,\vtilden)_{0,\Omega}|}{\Normthreebars{\vtilden}}.
\end{equation}
If~$u$ belongs to $H^{s} (\Omega)$,
$s$ larger than~$3/2$,
and~$\p$ smaller than or equal to~$s$,
we have
\begin{equation} \label{convergence:stabilized-method}
\Normthreebars{u-\utilden}
\lesssim \h^\p \SemiNorm{u}_{\p+1,\Omega}. 
\end{equation}
\end{proposition}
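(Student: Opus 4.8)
Throughout I treat the nontrivial case~$\p$ even (for odd~$\p$ the form~$\atilden$ equals~$\an$, $\PiLpmoF=0$, and Proposition~\ref{proposition:standard-convergence} applies verbatim), following the pattern of the proof of Proposition~\ref{proposition:standard-convergence}; the new ingredient is the well-posedness of~$\atilden$ with respect to the mesh-dependent norm~$\Normthreebars{\cdot}$ of~\eqref{discrete-norm}. The structural fact underpinning everything is that, for every~$\vtilden\in\Vtilden$ and every edge~$\F$, the jump~$\jump{\vtilden}$ is a univariate polynomial of degree~$\le\p$ on~$\F$ which, by definition of~$\Htildesnctaunom{1}{\p}$ (and of~$\Htildesncgtaunom{1}{\p}$ with~$g=0$ on boundary edges), is $L^2(\F)$-orthogonal to~$\Pbb_{\p-2}(\F)$ and to~$L_\p^\F$; hence $\jump{\vtilden}\in\Span\{L_{\p-1}^\F\}$ and $\PiLpmoF\jump{\vtilden}=\jump{\vtilden}$, so the penalty in~\eqref{atilde} acts as a \emph{full} jump penalty on~$\Vtilden$.

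\emph{Well-posedness.} Continuity of~$\atilden$ on~$\Vtildenzero\times\Vtildenzero$ with respect to~$\Normthreebars{\cdot}$ follows from the Cauchy--Schwarz inequality, the identity above, and the discrete trace inequality $\Norm{\nbfF\cdot\gradh w}_{0,\F}\lesssim\hF^{-1/2}\Norm{\gradh w}_{0,\E}$, valid for~$w$ polynomial on~$\E\supset\F$. Coercivity $\atilden(\vtilden,\vtilden)\gtrsim\Normthreebars{\vtilden}^2$ is obtained by bounding the two symmetric consistency terms of~\eqref{atilde} via Young's inequality and the same discrete trace inequality, absorbing part of each cross term into~$\SemiNorm{\vtilden}^2_{1,\h;\Omega}$ and the rest into~$\sum_{\F\in\Fcaln}\hF^{-1}\Norm{\PiLpmoF\jump{\vtilden}}^2_{0,\F}$; this requires~$\eta$ larger than a threshold proportional to the inverse trace constant, which is small precisely because the penalty effectively lives on the one-dimensional space~$\Span\{L_{\p-1}^\F\}$ per edge. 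Since~$\Normthreebars{\cdot}$ is a norm on~$\Vtildenzero$ by~\eqref{Poincare-Friedrichs}, Lax--Milgram yields well-posedness of~\eqref{method-stabilized}.

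\emph{The Strang estimate and consistency.} Inequality~\eqref{strang:on-tilde} is obtained by a Strang-type argument as in the proof of Proposition~\ref{proposition:standard-convergence}: by coercivity, $\Normthreebars{\utilden-\vtilden}^2\lesssim\antildeparent{\utilden-\vtilden}{\utilden-\vtilden}=\antildeparent{u-\vtilden}{\utilden-\vtilden}+\big((f,\utilden-\vtilden)_{0,\Omega}-\antildeparent{u}{\utilden-\vtilden}\big)$ for every~$\vtilden\in\Vtildenzero$ (using~\eqref{method-stabilized} with $w=\utilden-\vtilden$); bounding the first summand by continuity of~$\atilden$ (the gradient-average trace of~$u$ being controlled by the trace theorem, since~$s>3/2$) and the second by the supremum in~\eqref{strang:on-tilde}, then dividing by~$\Normthreebars{\utilden-\vtilden}$, using the triangle inequality, and taking the infimum over~$\vtilden$, yields~\eqref{strang:on-tilde}. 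Crucially, the residual term actually vanishes here: an elementwise integration by parts together with $-\Delta u=f$, $\jump{u}=0$ ($u\in H^1_0(\Omega)$) and $(\nabla u)_{|\F}=\average{\gradh u}$ (single-valued since $\nabla u\in[H^{s-1}(\Omega)]^2$ with $s-1>1/2$) give
\[
\antildeparent{u}{\vtilden}-(f,\vtilden)_{0,\Omega}
=\sum_{\F\in\Fcaln}\Big[(\nbfF\cdot\nabla u,\jump{\vtilden})_{0,\F}-\big(\PiLpmoF(\nbfF\cdot\nabla u),\PiLpmoF\jump{\vtilden}\big)_{0,\F}\Big]
=\sum_{\F\in\Fcaln}\big((I-\PiLpmoF)(\nbfF\cdot\nabla u),\,\PiLpmoF\jump{\vtilden}\big)_{0,\F}=0,
\]
the last two steps using $\jump{\vtilden}=\PiLpmoF\jump{\vtilden}$ and that~$\PiLpmoF$ is an $L^2(\F)$-orthogonal projection; thus~\eqref{method-stabilized} is exactly consistent for~\eqref{weak:form}.

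\emph{Approximation and conclusion.} It remains to bound $\inf_{\vtilden\in\Vtildenzero}\Normthreebars{u-\vtilden}$. I take~$\vI\in\Vtilden$ defined by matching the unisolvent degrees of freedom~\eqref{dof-moment:new} of~$u$ (as in Remark~\ref{remark:interpolation}); since~$u\in H^1_0(\Omega)$ its boundary edge moments vanish, so~$\vI\in\Vtildenzero$. A Bramble--Hilbert/scaling argument gives $\SemiNorm{u-\vI}_{1,\h;\Omega}\lesssim\h^\p\SemiNorm{u}_{\p+1,\Omega}$, while $\jump{u}=0$, the multiplicative trace inequality, and elementwise polynomial approximation yield $\sum_{\F\in\Fcaln}\hF^{-1}\Norm{\PiLpmoF\jump{u-\vI}}^2_{0,\F}\le\sum_{\F\in\Fcaln}\hF^{-1}\Norm{\jump{\vI-u}}^2_{0,\F}\lesssim\h^{2\p}\SemiNorm{u}^2_{\p+1,\Omega}$; hence $\inf_{\vtilden\in\Vtildenzero}\Normthreebars{u-\vtilden}\le\Normthreebars{u-\vI}\lesssim\h^\p\SemiNorm{u}_{\p+1,\Omega}$, which together with~\eqref{strang:on-tilde} and the vanishing residual gives~\eqref{convergence:stabilized-method}. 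I expect the main obstacle to be the coercivity of~$\atilden$: one must exploit $\PiLpmoF\jump{\vtilden}=\jump{\vtilden}$ on~$\Vtilden$ so that the penalty controls the \emph{entire} jump, and track the inverse trace constant carefully to make the admissible~$\eta$ explicit (and much smaller than in DG); the same identity is also precisely what makes the stabilized method exactly consistent, thereby restoring the order that is lost by the unstabilized scheme.
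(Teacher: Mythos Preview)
Your argument is correct and structurally follows the paper's: coercivity of~$\atilden$ via the discrete trace inequality and Young's inequality, then a second Strang's lemma, then separate treatment of the consistency and approximation terms. Two points differ. For the consistency term, the paper manipulates the residual into the form $\sum_{\F}(\nbfF\cdot\nabla u - \PiltwoF{\p-1}(\nbfF\cdot\nabla u),\,\jump{\vtilden}-\PiltwoF{0}\jump{\vtilden})_{0,\F}$ and then bounds it by~$\mathcal O(h^{\p})$ along the lines of Proposition~\ref{proposition:standard-convergence}; you instead observe directly that the residual is \emph{identically zero}, using $\jump{\vtilden}=\PiLpmoF\jump{\vtilden}$ together with the orthogonality of~$\PiLpmoF$. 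This is sharper---the paper's final expression is in fact zero as well, since $\jump{\vtilden}\in\Span\{L_{\p-1}^\F\}\subset\Pbb_{\p-1}(\F)$ forces $\jump{\vtilden}-\PiltwoF{\p-1}\jump{\vtilden}=0$---and amounts to Galerkin orthogonality of the stabilized scheme. For the approximation term, you take the nonconforming interpolant built on the degrees of freedom~\eqref{dof-moment:new}, whereas the paper bounds the infimum by the $H^1$-conforming best error, for which the jump contribution vanishes trivially; both routes work. One small caveat: your aside that the admissible~$\eta$ is small ``precisely because the penalty effectively lives on the one-dimensional space'' misidentifies the mechanism---the coercivity threshold is driven by the discrete trace constant for~$\gradh\vtilden$, which depends on~$\p$ and not on the dimension of the penalty space.
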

\begin{proof}
The bilinear form in~\eqref{atilde}
is coercive with respect to the mesh-dependent norm~\eqref{discrete-norm}.
Indeed, given~$\vtilden$ in~$\Vtilden$,
definition~\eqref{atilde} yields
\begin{equation} \label{atilde:coercivity}
\atilden(\vtilden,\vtilden) = \sum_{\E \in \taun} \SemiNorm{\vtilden}^2_{1,\E}
- 2 \sum_{\F \in \Fcaln} (\PiLpmoF \nbfF \cdot \average{ \nabla \vtilden }, \PiLpmoF \jump{ \vtilden } )_{0,\F}
+ \eta \sum_{ \F \in \Fcaln} \hF^{-1} \Norm{\PiLpmoF \vtilden }^2_{0,\F}.
\end{equation}
For a given~$\E$ in~$\taun$,
the following discrete trace inequality
\cite[Lemma 1.46]{DiPietro-Ern:2012} holds true:
\begin{equation} \label{trace-inequality:discrete}
\hE^\frac12  \Norm{\vtilden}_{0, \partial \E }
\lesssim \Norm{\vtilden}_{0,\E}.
\end{equation}
On a fixed~$\F$ in~$\Fcaln$,
the Cauchy--Schwarz inequality,
the property~$\hF \le \hE$ for~$\E$ in~$\omegaF$,
the stability of the $L^2$ projector,
and the discrete trace inequality \eqref{trace-inequality:discrete} yield
\small\begin{equation*}
\begin{split}
&\sum_{ \F \in \Fcaln} ( \PiLpmoF \nbfF \cdot \average{\nabla \vtilden},\PiLpmoF \jump{\vtilden})_{0, \F}
\le \big(\sum_{ \F \in \Fcaln} \hF \Norm{ \PiLpmoF \nbfF \cdot \average{\gradh \vtilden}{} }_{0,\F}^2 \big)^\frac12
    \big( \sum_{\F \in \Fcaln} \hF^{-1} \Norm{\PiLpmoF \jump{\vtilden}}^2_{0,\F} \big)^\frac12 \\
& \le \big( \sum_{\E \in \taun}
    \hE \Norm{ \gradh \vtilden{}_{|\E}}^2_{0, \partial \E} \big)^\frac12 
    \big( \sum_{\F \in \Fcaln} \hF^{-1} \Norm{ \PiLpmoF \jump{\vtilden}}^2_{0,\F} \big)^\frac12 
\lesssim \Norm{\gradh \vtilden }_{0,\Omega} \big( \sum_{\F \in \Fcaln} \hF^{-1} \Norm{ \PiLpmoF \jump{\vtilden}}^2_{0,\F} \big)^\frac12.
\end{split}
\end{equation*} \normalsize
Substituting the above inequality in~\eqref{atilde:coercivity}, we obtain
\begin{equation*} 
\atilden(\vtilden,\vtilden)
\gtrsim \Norm{\gradh \vtilden }^2_{0,\Omega} 
    - \Norm{\gradh \vtilden }_{0, \Omega}
    \big(\sum_{\F \in \Fcaln} \hF^{-1} \Norm{ \PiLpmoF \jump{\vtilden}}^2_{0,\F}\big)^{\frac12}
    + \eta \sum_{\F \in \Fcaln} \hF^{-1} \Norm{ \PiLpmoF \jump{\vtilden}}^2_{0,\F}.
\end{equation*}
\normalsize
Further using Young's inequality,
this yields the coercivity of~$\atilden(\cdot,\cdot)$
with respect to the norm~\eqref{discrete-norm}
for~$\eta$ sufficiently large.
On the one hand this implies the well-posedness of
method~\eqref{method-stabilized}.
On the other hand, inequality~\eqref{strang:on-tilde}
follows from the second Strang's lemma;
see, e.g., \cite{Strang:1972,Brenner:2015}.
The hidden constants therein depends on the discrete coercivity constant, which in turns depend on~$\eta$ and~$\p$.
\medskip

In order to derive the error estimate~\eqref{convergence:stabilized-method},
we show an upper bound for the two terms
on the right-hand side of~\eqref{strang:on-tilde} separately.
As for the first one, we observe that
\begin{equation*}
\begin{split}
& \inf_{\vtilden \in \Vtildenzero}
(\sum_{\E \in \taun} \Norm{\nabla(u- \vtilden)}^2_{0,\E} 
+ \sum_{\F \in \Fcaln} \hF^{-1} 
        \Norm{\PiLpmoF(\jump{\vtilden})}^2_{0,\F}) \\
& \le 
\inf_{\vtilden \in \Vtildenzero} (\sum_{\E \in \taun} \Norm{\nabla(u - \vtilden)}^2_{0,\E} 
+ \sum_{\F \in \Fcaln} \hF^{-1} 
        \Norm{\jump{\vtilden}}^2_{0,\F}).  
\end{split}
\end{equation*}
The best error in CR spaces is smaller than
the error in $H^1$ conforming spaces;
in particular the jump terms vanish.
Optimal convergence follows from
polynomial approximation results~\cite{Brenner-Scott:2008}.

As for the second term in~\eqref{strang:on-tilde}, 
convergence for odd~$\p$ is already discussed in
Proposition~\ref{proposition:standard-convergence}.
Therefore, we focus on the even~$\p$ case.
On all edges,
$\average{\nabla u}$, $\jump{u}$,
and~$\PiLpmoF\jump{\vtilden}$
are equal to~$\nabla u$, $0$, and $\jump{\vtilden}$.
These identities, an integration by parts,
problem~\eqref{strong-pois},
and the consistency of the bilinear form~\eqref{atilde} yield
\footnotesize\begin{equation*}
\begin{split}
& \antildeparent{u}{\vtilden} -(f,\vtilden)_{0,\Omega}
 = \sum_{\F \in \Fcaln}
    \left(\nbfF \cdot \nabla u,
    \jump{\vtilden} -\PiltwoF{\p-2} \jump{\vtilden} \right)_{0,\F}  
    - \sum_{\F \in \Fcaln}
    \left(\PiLpmoF (\normal{\F} \cdot \nabla u ),
    \PiLpmoF \jump{\vtilden} \right)_{0,\F} \\
& = \sum_{\F \in \Fcaln}
    \left(\nbfF \cdot \nabla u,
    \jump{\vtilden} -\PiltwoF{\p-1} \jump{\vtilden} \right)_{0,\F} 
 = \sum_{\F \in \Fcaln} \left(\nbfF \cdot \nabla u - \PiltwoF{\p-1} (\normal{\F} \cdot \nabla u ), \jump{\vtilden}-\PiltwoF{0}\jump{\vtilden}\right)_{0,\F}.
\end{split}            
\end{equation*}\normalsize
The assertion now follows along the same lines
as in the proof of Proposition~\ref{proposition:standard-convergence}.
\end{proof}

\begin{remark} \label{remark:simplified-bf-CR}
We may drop the projection operators appearing
in the definition of the the bilinear form in~\eqref{atilde}
and norm in~\eqref{discrete-norm}
since the jumps of CR functions already belong
to the span of a single Legendre polynomial.
In fact, this is what we did in practice in the implementation
of the method; see Section~\ref{section:numerics} below.
An analogous  comment applies to the variable order
scheme detailed in Section~\ref{section:variable-degree} below.
\end{remark}

\section{Numerical aspects for uniform degree CR elements in 2D} \label{section:numerics}

Here, we are interested in assessing the numerical performance of method~\eqref{method-stabilized}.
We focus on the $\h$- and $\p$-versions
of the method in Sections~\ref{subsection:conv-h-version}
and~\ref{subsection:conv-p-version};
since the $\h\p$-version requires additional
technicalities, it will be coped with
in Section~\ref{section:variable-degree} below.
Section~\ref{subsection:stab-parameter}
is concerned with investigating
the effect of the stabilization parameter on the method.
Comparisons with the standard symmetric interior penalty DG
and the standard CR methods are given throughout.

\paragraph*{Test cases.}
We consider $\Omega:= (0,1)^2$
and the exact solutions
\begin{equation} \label{u1}
u_1(x,y) = x+y
\end{equation}
and 
\begin{equation} \label{u2}
u_2(x,y)=\sin(\pi\ x) \sin(\pi\ y).
\end{equation}
The source term and the boundary data
are computed correspondingly.

\paragraph*{Meshes.}
We consider sequences of shape-regular,
quasi-uniform, unstructured simplicial meshes
with halving diameter;
in practice, these meshes are constructed
using the PDE toolbox of Matlab.

\paragraph*{Error measures.}
In what follows, $\un$, $\utilden$, and~$\uDG$
denote the solutions to~\eqref{method-standard},
\eqref{method-stabilized},
and a standard symmetric interior penalty DG method~\cite{DiPietro-Ern:2012}.
The practical realization of the DG method
is based on the use of piecewise modal basis functions,
for a fairer comparison with the other two methods;
we recall for completeness the DG bilinear form
\[
\begin{split}
a_{\text{DG}}(\uDG,\vDG)
:= & (\gradh \uDG, \gradh \vDG)_{0,\Omega}
    - \sum_{\F \in \Fcaln} 
    \left( \nbfF \cdot\average{\gradh \uDG},  \jump{\vDG}\right)_{0,\F}  \\
&  - \sum_{\F \in \Fcaln} \left(  \jump{\uDG}, \nbfF \cdot \average{\gradh \vDG}\right)_{0,\F}
    + \etaDG \sum_{\F \in \Fcaln } \hF^{-1} 
        \left( \jump{\uDG},  \jump{\vDG}\right)_{0,\F},
\end{split}
\]
where~$\etaDG$ is a sufficiently large, positive constant,
which has to scale as~$\p^2$ for stability reasons.

We shall compute the following error measures:
\begin{equation} \label{error-measures}
\ECRp := \frac{\NormthreebarsDG{u-\un}}{\SemiNorm{u}_{1,\Omega}}  ,
\qquad\qquad
\EtildeCRp := \frac{\NormthreebarsDG{u-\utilden}}{\SemiNorm{u}_{1,\Omega}} ,
\qquad\qquad
\EDGp := \frac{\NormthreebarsDG{u-\uDG}}{\SemiNorm{u}_{1,\Omega}},
\end{equation}
where the DG norm is given by
\[
\NormthreebarsDG{\vDG}^2
:= \SemiNorm{\vDG}^2_{1,\h;\Omega} 
  + \sum_{\F \in \Fcaln} \hF^{-1} 
        \Norm{\jump{\vDG}}^2_{0,\F}.
\]
The DG norm for the new CR space
coincides with the norm $\Normthreebars{\cdot}$
in~\eqref{broken:sn-n};
the DG norm for the standard CR space
is stronger than then standard
norm $\SemiNorm{\cdot}_{1,\h;\Omega}$
(albeit the jump terms only involve the projection
onto the scaled Legendre polynomials of order~$\p$).
The reasons why we consider the full DG norm
is to have a fairer comparison
with the other error measures;
very similar results are obtained
with the broken gradient norm.

\paragraph*{On the nestedness of the spaces~$\Vtilden$ in~\eqref{new:explicit-CR}.}
While using the new CR spaces as in~\eqref{new:explicit-CR},
we employ ``essentially'' hierarchical basis functions:
the conforming basis functions are modal,
which are hierarchical as reviewed in Section~\ref{subsection:modal};
the nonconforming bubbles are not hierarchical
but are always one per edge,
differently from the standard CR case.
In fact, as detailed in Remark~\ref{remark:alternative-even-fixed} below,
one could actually employ ``fully''
nested spaces based on using
linear nonconforming bubbles.
As such, it is possible to optimize the matrix assembly process.

\paragraph*{Imposing Dirichlet boundary conditions}

Introduce
\[
\widetilde\Pbb_{\p}^B(\FcalnB)
:= 
\begin{cases}
\Pbb_{\p-1}^B(\FcalnB)                            & \text{if $\p$ is odd}  \\
\Span(\Pbb_{\p-2}(\F),L_\p^\F\mid \F\in\FcalnB) & \text{if $\p$ is even}.
\end{cases}
\]
Possibly inhomogeneous Dirichlet boundary
conditions~$g$ in $H^{\frac12}(\partial\Omega)$
are imposed as in~\eqref{nc-Sobolev-tilde}.
One may alternative solve the (symmetric) mixed problem
\begin{equation}\label{method:with-BCs}
\begin{cases}
\text{find } \utilden \in \Vtilden, \, \btilden \in \Bcaltilden:= \widetilde\Pbb_{\p}^B(\FcalnB) \text{ such that} \\
\anparent{\utilden}{\vtilden} + (\btilden,\vtilden)_{0,\partial \Omega}
= ( f,\vtilden)_{0,\Omega} 
    - (g, \nablah \vtilden \cdot \nbfE)_{0,\partial\Omega}\\
\qquad\qquad\qquad\qquad\qquad\qquad
    + \eta \sum_{\F\in\FcalnB} \hF^{-1} (g,\vtilden)_{0,\partial\E}
                & \forall \vtilden \in \Vtilden \\
(\utilden,\ctilden)_{0,\partial \Omega} = (g, \ctilden)_{0,\partial\Omega}
                &  \forall \ctilden \in \Bcaltilden .
\end{cases}
\end{equation}

\subsection{Numerical results: \texorpdfstring{$\h$}{h}-version}
\label{subsection:conv-h-version}
Here, we investigate the convergence
of the errors in~\eqref{error-measures}
under $\h$-refinements.
Since method~\eqref{method-stabilized}
coincides with the standard CR method~\eqref{method-standard} for odd~$\p$,
we consider even integers~$\p$,
more precisely $\p=2$ and~$\p=4$.
We employ $\eta=\etaDG=5$ and~$20$
for the two polynomial degrees.

First, we consider the exact solution~$u_1$
in~\eqref{u1}, which belongs to the CR and DG spaces above
and is thus reproduced by all methods up to round-off errors;
we display the results in Figure~\ref{fig:condition-number}.
This test case shows how the propagation
of the floating point errors depends on the number
of degrees of freedom.

\begin{figure}[H]
    \centering
\includegraphics[width = 0.45\linewidth]{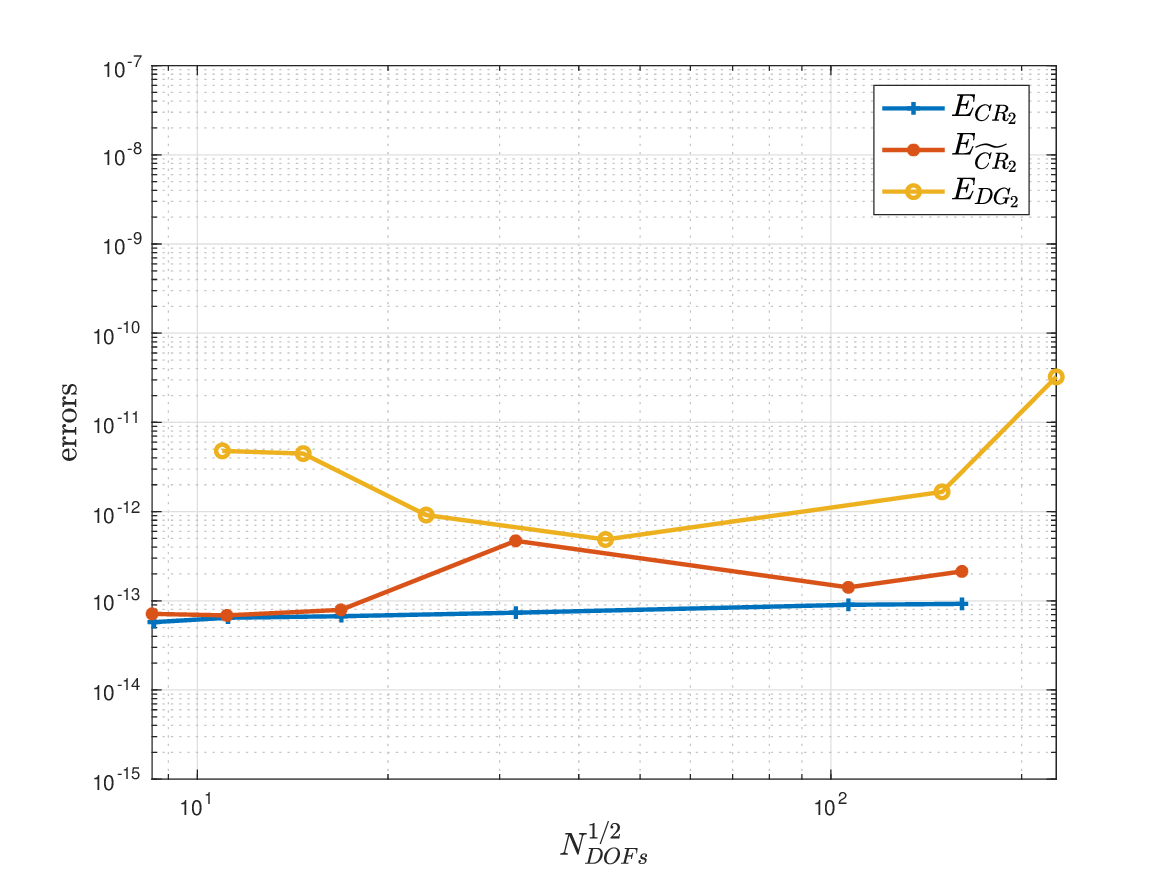}
\includegraphics[width = 0.45\linewidth]{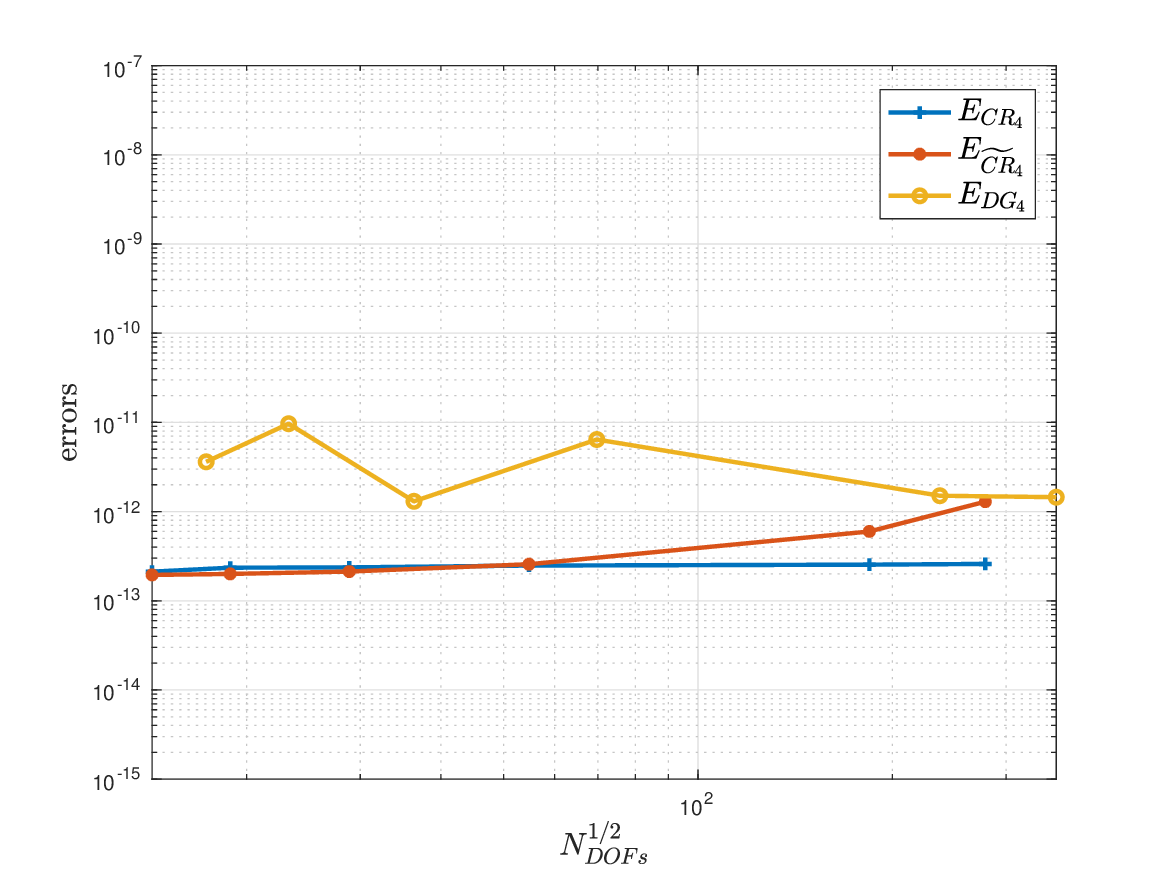}
\caption{Exact solution~$u_1$ in~\eqref{u1};
$\p=2$ and~$\eta=\etaDG=5$ (left panel);
$\p=4$ and~$\eta=\etaDG=20$ (right panel).
We compare the error measures in~\eqref{error-measures}
under $\h$-refinements.} \label{fig:condition-number}
\end{figure}

The errors of the two CR methods
are smaller than that of the DG one.
This is probably due to the fact that in the standard
CR method no stabilization is employed
whereas in the new one
the stabilization is smaller than that for the DG one.
\medskip

Next, we consider the exact solution~$u_2$
in~\eqref{u2},
and $\eta=20$
for the polynomial degrees~$2$ and~$4$,
and display the results in 
Figure~\ref{fig:h-convergence}.
\begin{figure}[H]
\centering
\includegraphics[width=0.45\linewidth]{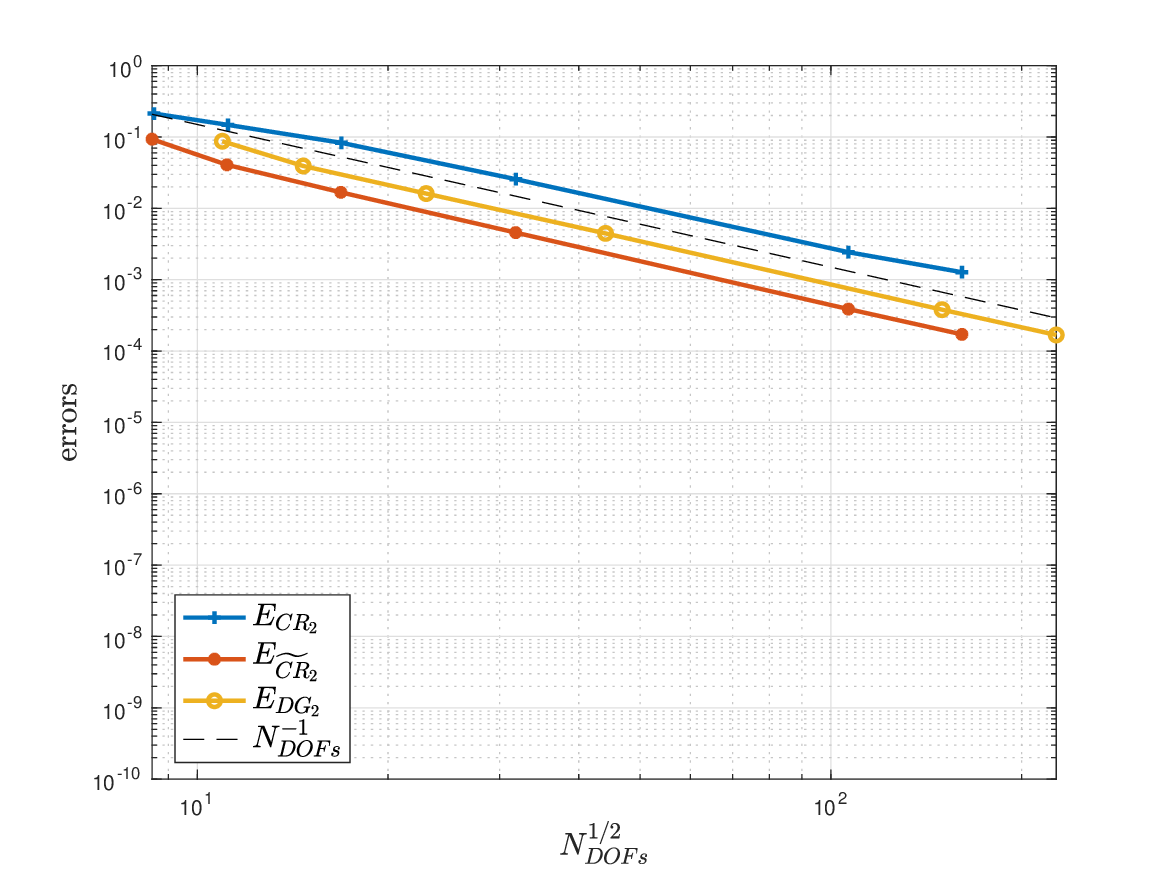}  \includegraphics[width=0.45\linewidth]{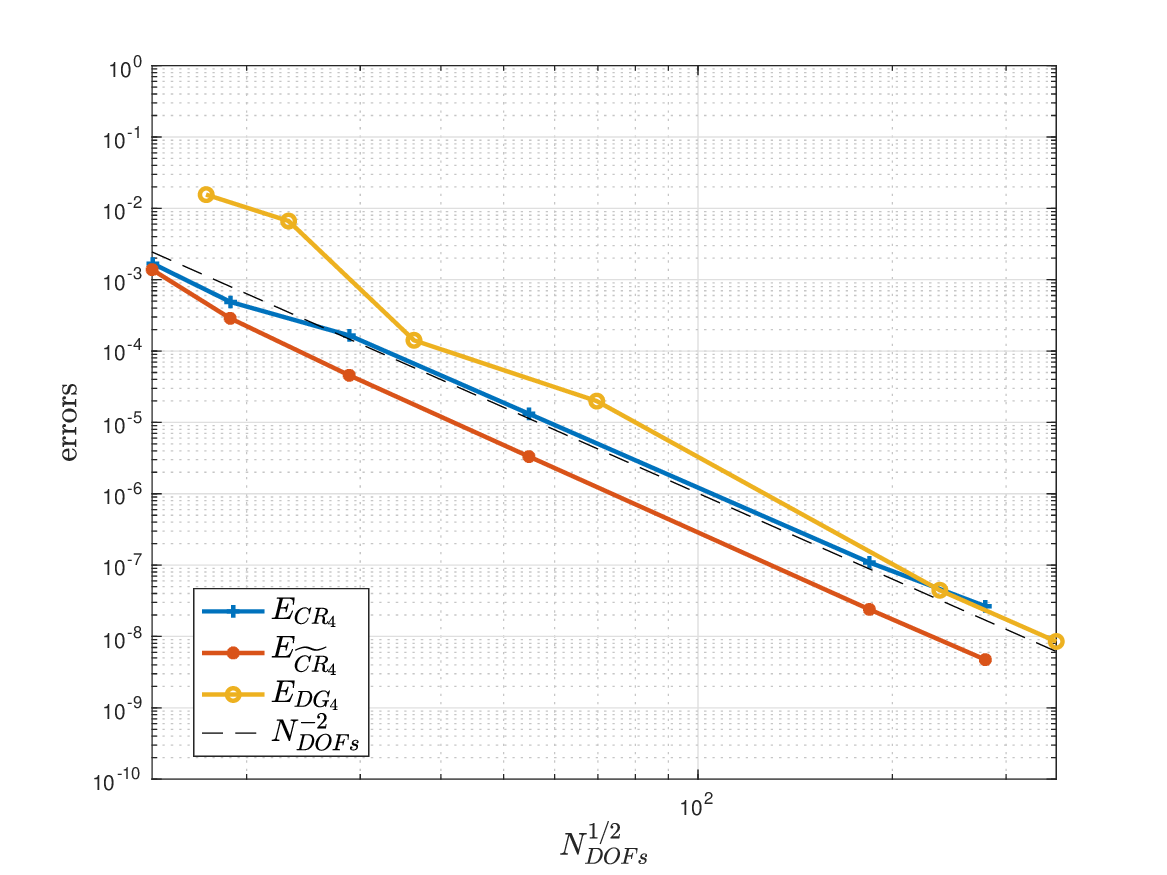}
\caption{Exact solution~$u_2$ in~\eqref{u2};
$\eta=\etaDG=20$;
$\p=2$ (left panel);
$\p=4$ (right panel).
We compare the error measures in~\eqref{error-measures}
under $\h$-refinements.} \label{fig:h-convergence}
\end{figure}

In all cases, we observe rates of convergence
that are in accordance with the theoretical findings.

\subsection{Numerical results: \texorpdfstring{$\p$}{p}-version}
\label{subsection:conv-p-version}

Here, we investigate the convergence
of the errors in~\eqref{error-measures}
under $\p$-refinements
using a fixed coarse simplicial mesh of $88$ elements.
We employ
$\eta=\etaDG= 5 \p^2$,
consider the exact solution~$u_2$ in~\eqref{u2},
and display the results in 
Figure~\ref{fig:p-convergence-variable-eta}.

\begin{figure}[H]
\centering
\includegraphics[width=0.45\linewidth]{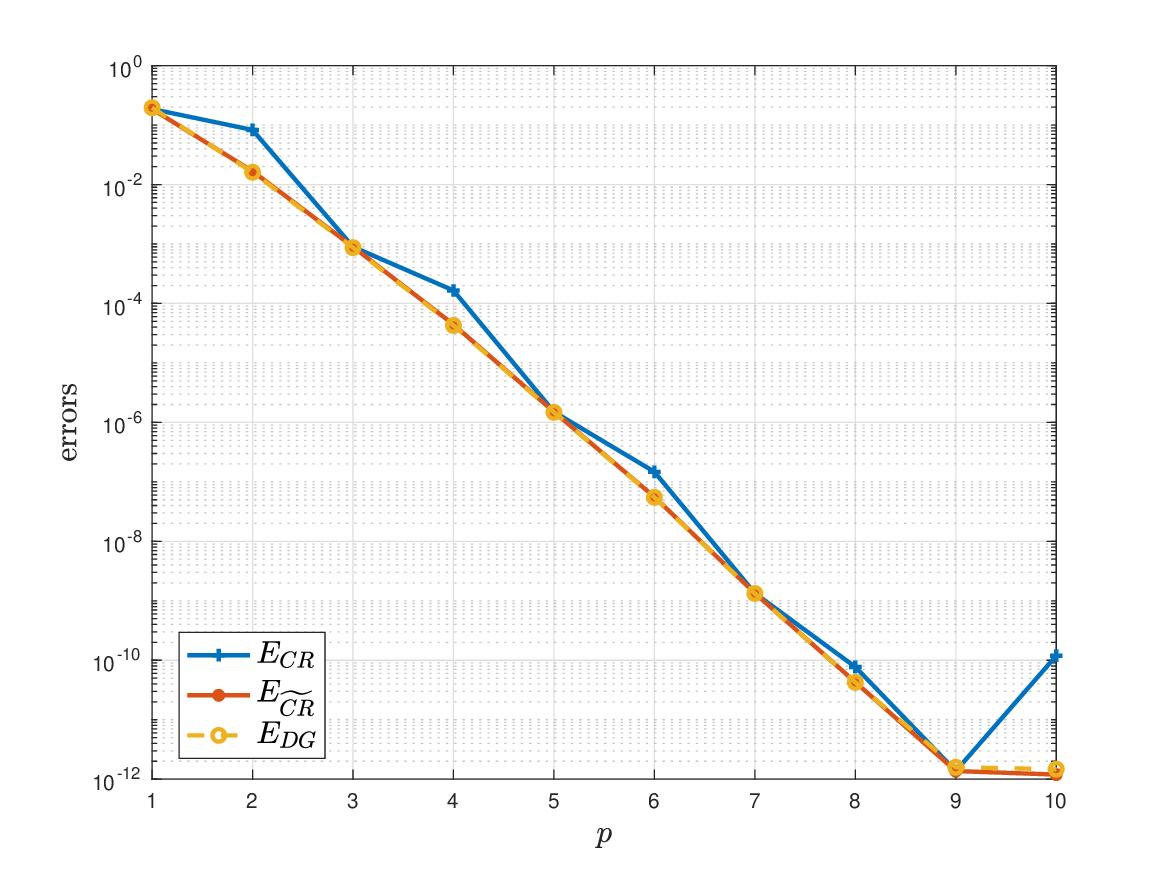} 
\includegraphics[width=0.45\linewidth]{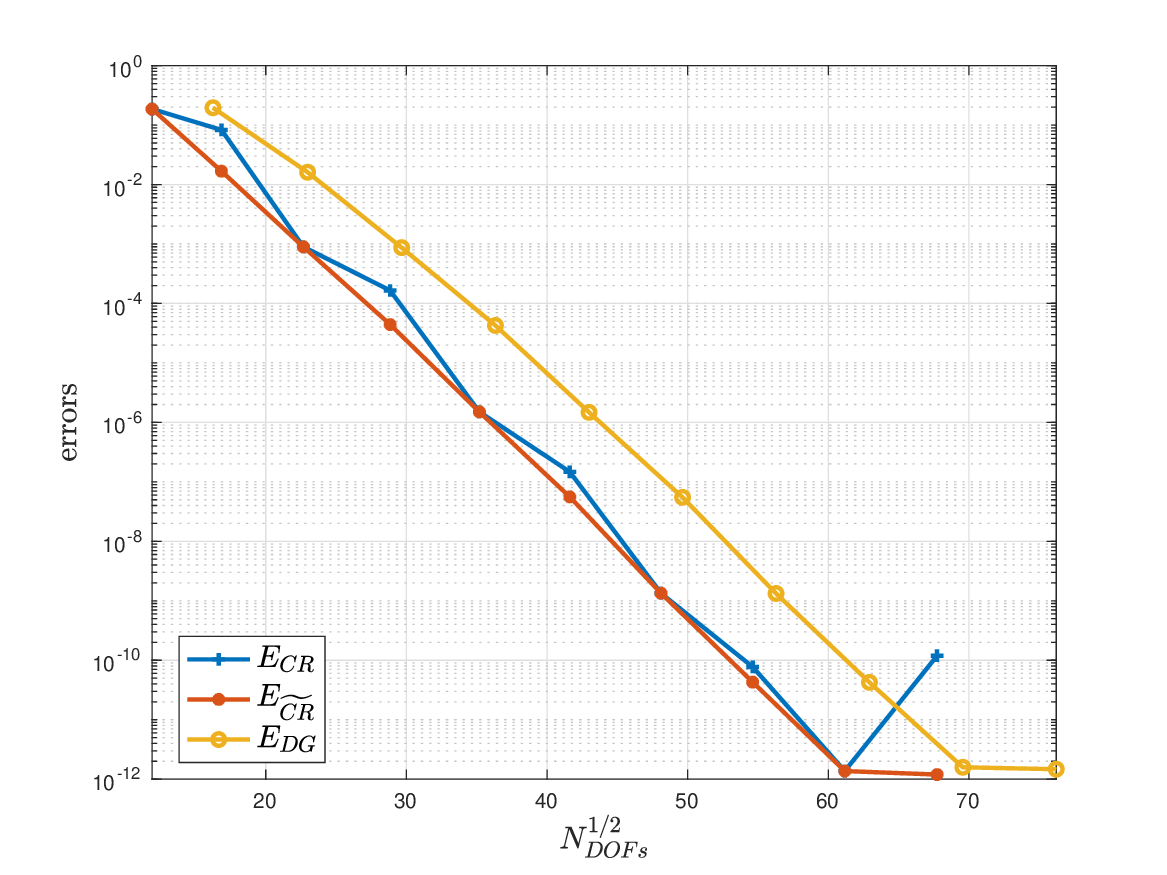}
\caption{Exact solution~$u_2$ in~\eqref{u2};
coarse fixed simplicial mesh;
$\eta=\etaDG= 5 \p^2$.
We compare the error measures in~\eqref{error-measures}
under $\p$-refinements with respect to the
polynomial degree (left panel)
and the square root of the number of degrees of freedom (right panel).}
\label{fig:p-convergence-variable-eta}
\end{figure}

As expected from standard $\p$-approximation theory,
see Remark~\ref{remark:hp-version} below,
we observe exponential convergence in terms of~$\p$.

\subsection{On the choice of the stabilization parameter}
\label{subsection:stab-parameter}

We investigate here the performance
of the errors $\EtildeCRp$ and~$\EDGp$
taking different values of $\eta$ and~$\etaDG$.
We consider the exact solution~$u_2$ in~\eqref{u2}
and compare the errors in~\eqref{error-measures}
in terms of different choices of the stabilization parameters
$\eta$ and~$\etaDG$,
say, in the set $\{0.5; 1; 2; 4; 8; 16; 32; 64\}$.
We pick $\p$ in $\{2 , 4,  6, 8 \}$.
We display the results in Figure~\ref{fig:choice-eta}

\begin{figure}[H]
\centering
\includegraphics[width=0.45\linewidth]{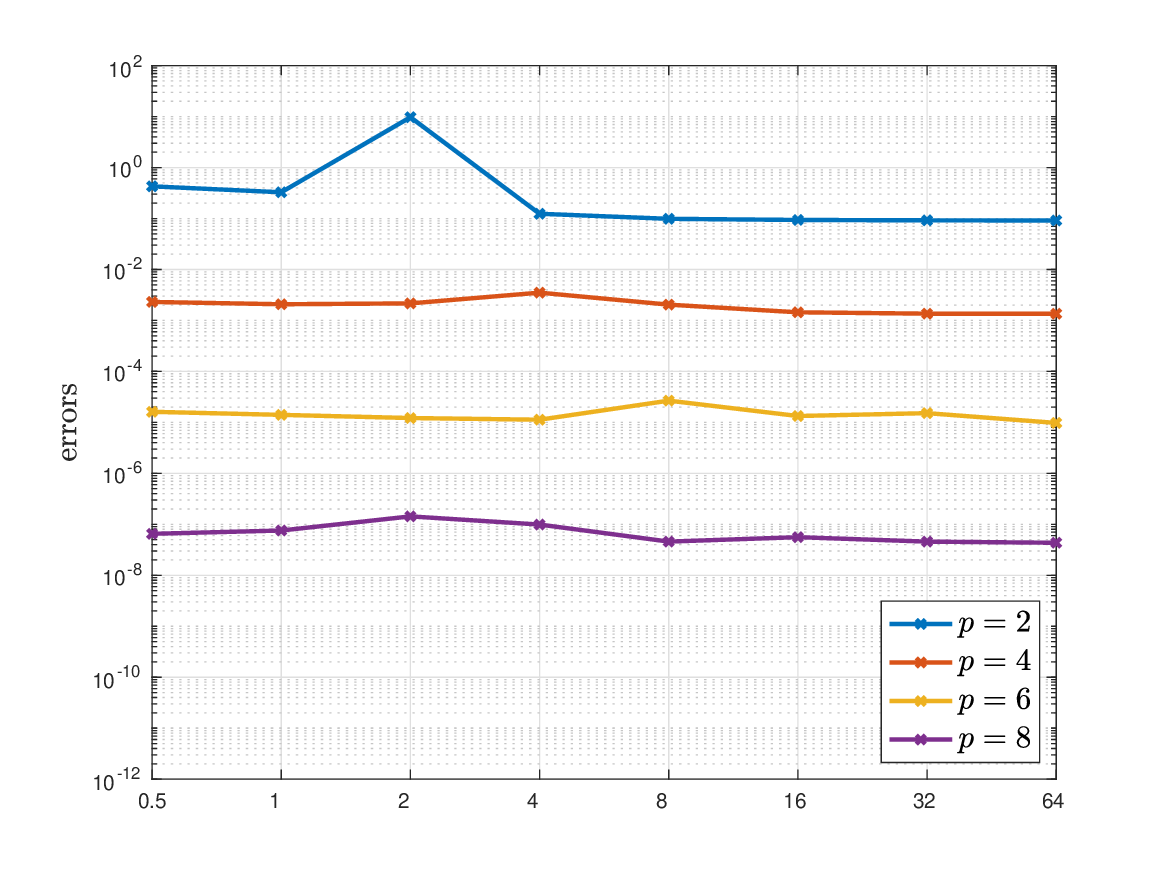}
\includegraphics[width=0.45\linewidth]{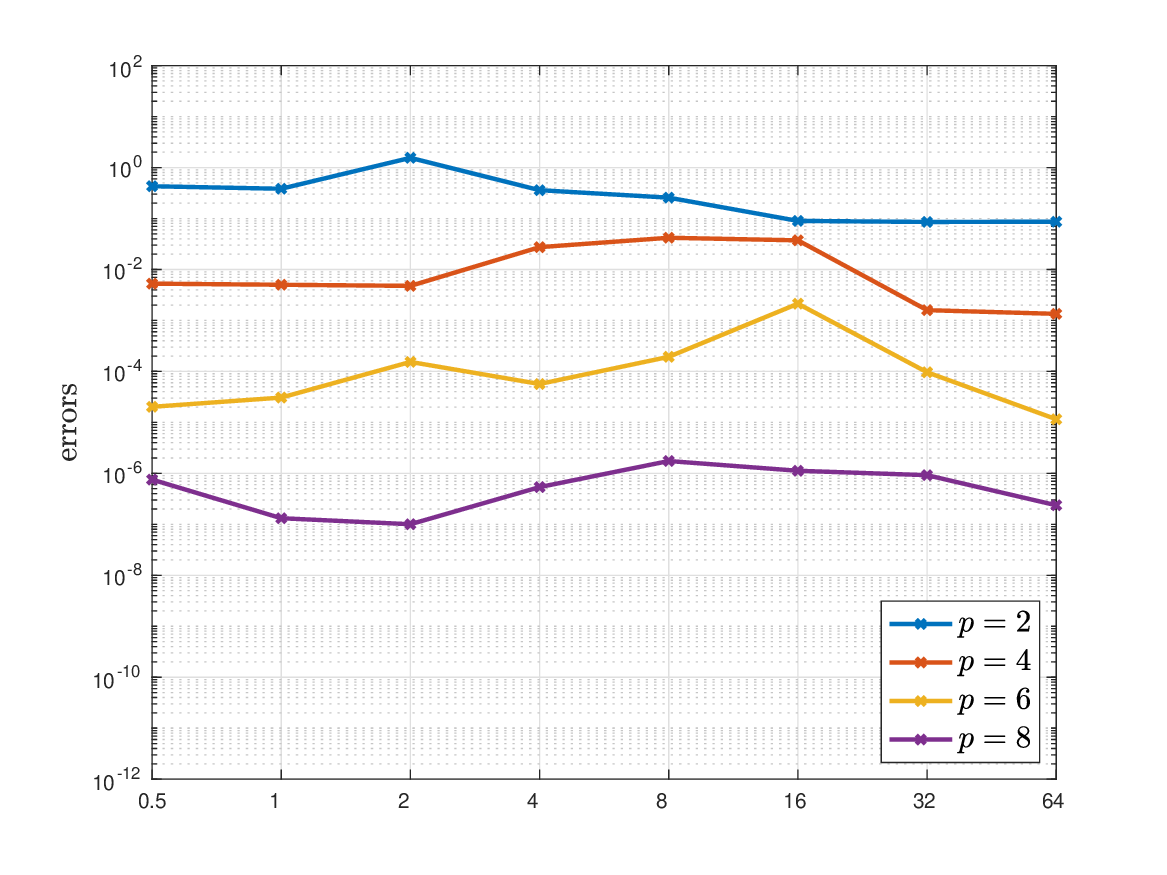}
\caption{Exact solution $u_2$ in~\eqref{u2};
$\p=2$, $4$, $6$, and $8$; fixed coarse simplicial mesh.
We compare $\EtildeCRp$ (left panel)
and $\EDGp$ (right panel)
picking $\eta$ in
$\{0.5; 1; 2; 4; 8; 16; 32; 64\}$.
}
\label{fig:choice-eta}
\end{figure}

It is apparent that, especially for high degree $\p$,
the errors for the DG method display a more oscillatory behaviour
compared to those for the modified CR method.

\section{Variable degree Crouzeix-Raviart elements} \label{section:variable-degree}
The aim of this section is to construct
and analyze variable degree CR methods.
After discussing why an ``obvious'' construction
cannot work in Section~\ref{subsection:variable-obvious-cannot-work},
in Section~\ref{subsection:variable-global-spaces}
we propose new variable order CR spaces;
Section~\ref{subsection:variable-method} is devoted to
the design and analysis of a corresponding variable order method;
in Section~\ref{subsection:nr-p-hp}, we assess
the performance of that method
with a particular emphasis on the approximation
of corner singularities under $\h\p$-refinements.

\paragraph*{Notation for variable order CR spaces.}
For any mesh~$\mesh$ in a given sequence,
we can assume that its elements are in bijection with
$\{1,2,\dots,\card(\taun)\}$;
let~$\pbf$ be in~$\mathbb N^{\card(\taun)}$.
With the element~$\E$ (in bijection with the index~$j$),
we associate the polynomial degree~$\pbf_j$
and denote it also by~$\p_{\E}$ on occasions.
In what follows, with an abuse of notation,
for a generic interior edge~$\F$,
$\Eo$ and~$\Et$ denote the two elements sharing~$\F$;
for a generic boundary edge~$\F$,
$\E$ denotes the element such that $\F = \partial\E\cap \partial\Omega$.

With each edge~$\F$,
we associate
\[
\pF:=
\begin{cases}
\max\{\pEo,\pEt\}   & \text{if } \F = \partial\Eo\cap\partial\Et \\
\pE                 & \text{if } \F = \partial\E\cap\partial\Omega.
\end{cases}
\]

\subsection{An ``obvious'' variable order CR space does not work} \label{subsection:variable-obvious-cannot-work}
We exhibit a counterexample to show why an ``obvious''
construction of variable order CR spaces
(and the corresponding method)
cannot deliver a sequence of discrete solutions
converging to the exact one.

Given~$\p$ in~$\mathbb N$, we write
\[
\ptilde
:= \begin{cases}
    \p   &\text{if $\p$ is odd} \\
    \p-1 &\text{if $\p$ is even}.
\end{cases}
\]
With each internal edge~$\F$,
we associate a double-sided nonconforming bubble~$\bptildeotF$ such that
\begin{equation} \label{two-sided-bubbles}
\bptildeotF{}_{|\Ej}
:=
\begin{cases}
\bptildeoF     & \text{in } \E_1 \\
\bptildetF     & \text{in } \E_2 \\
    0          & \text{elsewhere},
\end{cases}
\end{equation}
where~$\bptildeoF$ and~$\bptildetF$ are the nonconforming bubble functions
as in~\eqref{nc-edge:functions}.
A similar construction applies for boundary edges
(only a one-sided nonconforming bubble~$\bptildeEF$ appears).
As in the uniform polynomial degree case,
we have~$\bptildeotF{}_{|\F}=1$.
Besides, $\bptildeotF$ is nonconforming on the (four) edges of~$\omegaF$ in~\eqref{facet-patch}.

An   ``obvious'' construction is as follows:
\begin{itemize}
    \item on each element~$\E$, consider modal polynomials of order
    $\pE$ excluding the (linear) hat functions;
    \item associate with each internal edge~$\F$ shared
    by the elements~$\Eo$ and~$\Et$,
    the double-sided nonconforming bubble function in~\eqref{two-sided-bubbles};
    \item associate with each boundary edge~$\F$ on the boundary
    of the element~$\E$,
    the one-sided nonconforming bubble function of order~$\pE$.
\end{itemize}
An ``obvious'' CR variable order space is given
by the span of the functions described
in the above bullets.
We wish that a method associated with this space
is at least first order consistent,
i.e., that it reproduces affine solutions exactly
up to machine precision.

Consider a mesh of four elements
and a CR variable order space
corresponding to polynomial degrees $(1,1,1,3)$
as detailed in Figure~\ref{figure:1113}.
\begin{figure}[H]
\centering
\begin{center}
\begin{tikzpicture}[scale=0.9]
\draw[black, very thick, -] (0,0) -- (4,0) -- (4,4) -- (0,4) -- (0,0);
\draw[very thick, -] (0,0) -- (4,4);
\draw[very thick, -] (0,4) -- (4,0);
\draw[red,fill=red] (2,0) circle (.5ex);
\draw[red,fill=red] (4,2) circle (.5ex);
\draw[red,fill=red] (2,4) circle (.5ex);
\draw[red,fill=red] (3,1) circle (.5ex);
\draw[red,fill=red] (3,3) circle (.5ex);
\draw[purple,fill=purple] (1,1) circle (.5ex);
\draw[purple,fill=purple] (1,3) circle (.5ex);
\draw[blue,fill=blue] (0,2) circle (.5ex);
\node[fill=orange,regular polygon, regular polygon sides=4,inner sep=3pt] at (.5,.5) {};
\node[fill=orange,regular polygon, regular polygon sides=4,inner sep=3pt] at (1.5,1.5) {};
\node[fill=orange,regular polygon, regular polygon sides=4,inner sep=3pt] at (.5,3.5) {};
\node[fill=orange,regular polygon, regular polygon sides=4,inner sep=3pt] at (1.5,2.5) {};
\node[fill=orange,regular polygon, regular polygon sides=4,inner sep=3pt] at (0,1) {};
\node[fill=orange,regular polygon, regular polygon sides=4,inner sep=3pt] at (0,3) {};
\node[fill=orange,regular polygon, regular polygon sides=4,inner sep=3pt] at (.7,2) {};
\draw (.9,.2) node[black, left] {$1$};
\draw (3.98,.5) node[black, left] {$1$};
\draw (3.5,3.7) node[black, left] {$1$};
\draw (.375,3.5) node[black, left] {$3$};
\end{tikzpicture}
\end{center}
\caption{Graphical representation of the degrees of freedom
for an ``obvious'' variable order CR space
over a mesh of four elements
and polynomial degree distribution given by $(1,1,1,3)$.
The circles stand for the coefficients of the nonconforming bubbles:
in red and blue, the two-sided and one-sided
bubble functions of order~$1$ and~$3$ respectively;
in purple the two-sided bubble functions
of mixed orders~$1$ and~$3$.
The squares stand for the coefficients of the modal functions.
Inside each element, we write the corresponding local order.}
\label{figure:1113}
\end{figure}
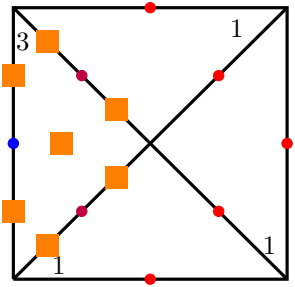

In order to represent, e.g., an affine
function on the left element
that is nonconstant on the three edges,
the coefficients of the nonconforming bubbles
on that element are fixed by the three vertex values;
moreover, as the involved bubbles
are cubic polynomials on that element,
surely all the coefficients of the cubic
modal basis functions cannot vanish.
Bearing this in mind, that linear function,
e.g., on the bottom element,
should be represented by the linear combination
of the linear nonconforming bubbles
(and this alone would be OK)
\emph{and} the modal basis functions
where the cubic one has nonzero coefficient.
This is clearly not possible,
whence a cleverer construction must be in order;
see Section~\ref{subsection:variable-global-spaces} below.
In a sense, this lack of linear consistency
might be regarded as a locking property
of the ``obvious'' construction of variable order
CR spaces as detailed above.

The main philosophy that we shall use
in order to fix this issue is as follows:
\begin{itemize}
    \item in order to avoid the locking,
    we shall always use linear nonconforming bubble functions
    (it can be readily checked on the example in Figure~\ref{figure:1113} that in this case the modal
    basis functions on the edges of the left element
    are not active);
    \item since linear nonconforming bubbles have nonvanishing
    linear component jumps, we shall correct the method
    similarly to what we did in Section~\ref{subsubsection:CR-standard-even}
    for all orders, regardless of its parity.
\end{itemize}

\subsection{Construction of variable degree CR global spaces} \label{subsection:variable-global-spaces}
We are now in a position to exhibit
an explicit version variable order
CR space associated with a mesh~$\taun$
and a polynomial degree distribution~$\pbf$,
which do not display the locking discussed in Section~\ref{subsection:variable-obvious-cannot-work}.
Given modal basis functions as in~\eqref{modal-facet}
and~\eqref{modal-element},
and the linear ($\p=1$) nonconforming bubble
functions as in~\eqref{nc-edge:functions},
we define
\begin{equation} \label{space:variable-degree-explicit}
\Vnextilde := \Span\left(                
                \btildeoF \text{ and }  \{\phijF\}_{j=1}^{\pF-1}\quad
                \forall \F \subset \Fcaln ;\quad
                \{\phiellE\}_{\ell=1}^{(\pE-1)(\pE-2)/2} 
                \quad \forall \E \in \taun                 \right).
\end{equation}

\begin{lemma} \label{lemma:unisolvence-variable-order}
The spanning set~\eqref{space:variable-degree-explicit} consists of linearly independent functions.
\end{lemma}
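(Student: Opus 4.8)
The plan is to adapt, edge by edge, the classical linear-independence argument used for the odd-degree spanning set~\eqref{explicit-CR:odd}. First I would suppose that a combination
\[
\sum_{\F \in \Fcaln} \alpha_\F\, \btildeoF
+ \sum_{\F \in \Fcaln} \sum_{j=1}^{\pF-1} \beta_{\F,j}\, \phijF
+ \sum_{\E \in \taun} \sum_{\ell=1}^{(\pE-1)(\pE-2)/2} \gamma_{\E,\ell}\, \phiellE = 0
\]
vanishes in $L^2(\Omega)$, fix an edge~$\F$ and an element~$\E$ with $\F \in \FcalE$ (the only one, if $\F \in \FcalnB$). Since $\btildeoF$ and $\phijF$ are supported in the patch $\omegaF$ (cf.~\eqref{facet-patch}) while $\phiellE$ is supported in~$\E$, restricting the identity to~$\E$ retains only the contributions attached to the three edges of~$\E$ and to~$\E$ itself, and yields a polynomial identity on~$\E$. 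Next I would take its trace on~$\F$: the bulk bubbles vanish on $\partial\E \supset \F$, and the modal edge functions of the two edges of~$\E$ other than~$\F$ vanish on~$\F$ by~\eqref{modal-facet}, so the trace identity involves only the three linear bubbles of the edges of~$\E$ and the traces ${\phijF}_{|\F}$, $j=1,\dots,\pF-1$.

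The elementary facts I would use are: $\btildeoF$ restricts to~$\F$ as the constant $L_1(1)=1$ (the edge barycentric coordinate opposite to~$\F$ vanishes on~$\F$); the linear bubbles of the two remaining edges of~$\E$ restrict to~$\F$ as affine functions of zero mean, i.e.\ scalar multiples of the Legendre polynomial $L_1^\F$; and each ${\phijF}_{|\F}$ is a polynomial of degree exactly $j+1$ (vanishing, moreover, at the two endpoints of~$\F$). Hence the traced identity has the form
\[
\alpha_\F \cdot 1 + c_\F\, L_1^\F + \sum_{j=1}^{\pF-1} \beta_{\F,j}\,{\phijF}_{|\F} = 0 \qquad \text{on } \F,
\]
with $c_\F$ a linear combination of the $\alpha$-coefficients, and this is a relation among the $\pF+1$ polynomials of pairwise distinct degrees $0,1,2,\dots,\pF$, hence linearly independent in $\Pbb_{\pF}(\F)$. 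It follows that $\alpha_\F = 0$, $c_\F=0$, and $\beta_{\F,j}=0$ for all $j$. Running this over every $\F \in \Fcaln$ annihilates all the $\alpha_\F$ and all the $\beta_{\F,j}$.

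What remains is $\sum_{\E \in \taun}\sum_\ell \gamma_{\E,\ell}\,\phiellE = 0$; restricting to a single element~$\E$ and invoking that the bulk bubbles $\{\phiellE\}_\ell$ are the products of $\lambda_1\lambda_2\lambda_3$ (positive in the interior of~$\E$) with a basis of $\Pbb_{\pE-3}(\E)$, cf.~\eqref{modal-element}, yields $\gamma_{\E,\ell}=0$ for all $\ell$, which concludes the argument. The only genuinely new point compared with the uniform odd-degree case — and the step I expect to require the most care — is the bookkeeping on a fixed edge in the second paragraph: since here \emph{all} nonconforming bubbles are linear, those attached to the two edges adjacent to~$\F$ do not vanish on~$\F$, but one only needs to observe that they contribute a single collinear mode ($L_1^\F$), so the constant and the higher-degree modes still decouple and force $\alpha_\F=0$ and $\beta_{\F,j}=0$; everything else is routine.
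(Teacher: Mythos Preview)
Your argument is correct. The edge-trace step is sound: on a fixed edge~$\F$ of~$\E$, the bulk bubbles and the modal edge functions of the two other edges vanish (each carries the factor~$\lambda_{\E,\F}$), the linear bubble~$\btildeoF$ restricts to the constant~$1$, the two remaining linear bubbles restrict to $\pm L_1^\F$, and the~$\phijF$ restrict to polynomials of exact degree~$j+1$; the degree-separation then kills all~$\alpha_\F$ and~$\beta_{\F,j}$, and the bulk bubbles are handled afterwards as you say.

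The paper, however, takes a shorter and more conceptual route: it simply observes that on each element the modal edge and bulk functions (a subset of the full modal basis of~$\Pbb_{\max_{\F\in\FcalE}\pF}(\E)$, hence linearly independent) all \emph{vanish at the three vertices}, so their span meets~$\Pbb_1(\E)$ only in~$\{0\}$; since the three restrictions $\btildeoF{}_{|\E}=1-2\lambdaEF$ are linearly independent and span~$\Pbb_1(\E)$, the union is linearly independent on~$\E$, which immediately gives the global statement. Your approach buys an explicit, self-contained computation that avoids invoking properties of the full modal basis; the paper's buys brevity by replacing your degree bookkeeping on edges with a single ``vanishing at vertices versus affine'' dichotomy on elements.
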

\begin{proof}
On each element, the span of the local modal basis
functions is a set of linearly independent functions,
which does not contain the space
of continuous piecewise affine functions.
That space is fixed by including also the linear nonconforming bubbles.
\end{proof}

A set of unisolvent degrees of freedom for the explicit space is then given by the coefficients
in the expansion with respect to the basis in~\eqref{space:variable-degree-explicit};
see Figure~\ref{figure:variable-degreee-dofs} for a graphical representation
with different degree distributions~$\pbf$.

\begin{figure}[htbp]
\centering
\begin{tikzpicture}[scale = 2]
\coordinate (A) at (0,0);
\coordinate (B) at (1,0);
\coordinate (C) at (1,1);
\coordinate (D) at (0,1);
\coordinate (E) at (1,2);
\coordinate (F) at (0,2);
\coordinate (a) at (0.25,0.25);
\coordinate (b) at (0.75,0.75);
\coordinate (c) at (0.25,1.25);
\coordinate (d) at (0.75,1.75);
\coordinate (o1) at (0.25,0.25);
\coordinate (o2) at (0.75,0.75);
\coordinate (o3) at (0.25,1.25);
\coordinate (o4) at (0.75,1.75);
\coordinate (o) at (0,0);
\draw (A) -- (B) -- (E) -- (F) -- cycle;
\draw (B) -- (D) -- (C) -- (F);
\node[] at ([xshift=-0.25cm,yshift=0.1cm]B) {$3$};
\node[] at ([xshift=-0.25cm,yshift=0.1cm]C) {$3$};
\node[] at ([xshift=0.25cm,yshift=-0.1cm]D) {$3$};
\node[] at ([xshift=0.25cm,yshift=-0.1cm]F) {$3$};
\node[orange] at ([xshift=0.25cm,yshift=0.25cm]A) {$\sqbullet$};
\node[orange] at ([xshift=0.75cm,yshift=0.75cm]A) {$\sqbullet$};
\node[orange] at ([xshift=0.25cm,yshift=1.25cm]A) {$\sqbullet$};
\node[orange] at ([xshift=0.75cm,yshift=1.75cm]A) {$\sqbullet$};

\foreach[evaluate=\shiftyE using \eley] \eley in {0,1,2}{
\foreach[evaluate=\shiftxE using \elex] \elex in {0,1}{
\foreach[evaluate=\x as \shiftx using 0.125*\x,
         evaluate=\y as \shifty using 0.125*(8-\x)+\shiftyE,
         evaluate=\shiftxb using 0.125*(8-\x),
         evaluate=\shiftyb using 0.125*(\x)+\shiftyE] \x in {1,2,4}{
\ifthenelse{\eley = 2}{\ifthenelse{\x = 4}{\node[green] at ([xshift=\shiftx cm, yshift = \shiftyE cm]o) {$\bullet$};}{\node[orange] at ([xshift=\shiftx cm, yshift = \shiftyE cm]o) {$ \sqbullet$};}}{
\ifthenelse{\x = 4}{\node[green] at ([xshift=\shiftx cm, yshift = \shiftyE cm]o) {$\bullet$};}{\node[orange] at ([xshift=\shiftx cm, yshift = \shiftyE cm]o) {$ \sqbullet$};}
\ifthenelse{\x = 4}{\node[green] at ([xshift = \shiftxE cm, yshift=\shifty cm]o) {$\bullet$};}{\node[orange] at ([xshift = \shiftxE cm, yshift=\shifty cm]o) {$ \sqbullet$};}
\ifthenelse{\x = 4}{\node[green] at ([xshift = \shiftxb cm, yshift=\shiftyb cm]o) {$\bullet$};}{\node[orange] at ([xshift = \shiftxb cm, yshift=\shiftyb cm]o) {$ \sqbullet$};}}
}}}

\draw ([xshift=2cm]A) -- ([xshift=2cm]B) -- ([xshift=2cm]E) -- ([xshift=2cm]F) -- cycle;
\draw ([xshift=2cm]B) -- ([xshift=2cm]D) -- ([xshift=2cm]C) -- ([xshift=2cm]F);
\node[] at ([xshift=1.75cm,yshift=0.1cm]B) {$2$};
\node[] at ([xshift=1.75cm,yshift=0.1cm]C) {$2$};
\node[] at ([xshift=2.25cm,yshift=-0.1cm]D) {$2$};
\node[] at ([xshift=2.25cm,yshift=-0.1cm]F) {$2$};

\foreach[evaluate=\shiftyE using \eley] \eley in {0,1,2}{
\foreach[evaluate=\shiftxE using 2+\elex] \elex in {0,1}{
\foreach[evaluate=\x as \shiftx using 0.125*\x+2,
         evaluate=\y as \shifty using 0.125*(8-\x)+\shiftyE,
         evaluate=\shiftxb using 0.125*(8-\x)+2,
         evaluate=\shiftyb using 0.125*(\x)+\shiftyE] \x in {1,4}{
\ifthenelse{\eley = 2}{\ifthenelse{\x = 4}{\node[green] at ([xshift=\shiftx cm, yshift = \shiftyE cm]o) {$\bullet$};}{\node[orange] at ([xshift=\shiftx cm, yshift = \shiftyE cm]o) {$ \sqbullet$};}}{
\ifthenelse{\x = 4}{\node[green] at ([xshift=\shiftx cm, yshift = \shiftyE cm]o) {$\bullet$};}{\node[orange] at ([xshift=\shiftx cm, yshift = \shiftyE cm]o) {$ \sqbullet$};}
\ifthenelse{\x = 4}{\node[green] at ([xshift = \shiftxE cm, yshift=\shifty cm]o) {$\bullet$};}{\node[orange] at ([xshift = \shiftxE cm, yshift=\shifty cm]o) {$ \sqbullet$};}
\ifthenelse{\x = 4}{\node[green] at ([xshift = \shiftxb cm, yshift=\shiftyb cm]o) {$\bullet$};}{\node[orange] at ([xshift = \shiftxb cm, yshift=\shiftyb cm]o) {$ \sqbullet$};}}
}}}

\draw ([xshift=4cm]A) -- ([xshift=4cm]B) -- ([xshift=4cm]E) -- ([xshift=4cm]F) -- cycle;
\draw ([xshift=4cm]B) -- ([xshift=4cm]D) -- ([xshift=4cm]C) -- ([xshift=4cm]F);
\node[] at ([xshift=3.75cm,yshift=0.1cm]B) {$4$};
\node[] at ([xshift=3.75cm,yshift=0.1cm]C) {$2$};
\node[] at ([xshift=4.25cm,yshift=-0.1cm]D) {$3$};
\node[] at ([xshift=4.25cm,yshift=-0.1cm]F) {$1$};


\node[orange] at ([xshift=4.25cm,yshift=0.25cm]A) {$\sqbullet$};
\node[orange] at ([xshift=4.4cm,yshift=0.25cm]A) {$\sqbullet$};
\node[orange] at ([xshift=4.25cm,yshift=0.4cm]A) {$\sqbullet$};

\foreach[evaluate=\x as \shiftx using 0.125*\x+4,
         evaluate=\y as \shifty using 0.125*(8-\x),
         evaluate=\shiftxb using 0.125*(8-\x)+4,
         evaluate=\shiftyb using 0.125*(\x)] \x in {1,...,4}{
\ifthenelse{\x = 4}{\node[green] at ([xshift=\shiftx cm]o) {$\bullet$};}{\node[orange] at ([xshift=\shiftx cm]o) {$ \sqbullet$};}
\ifthenelse{\x = 4}{\node[green] at ([xshift = 4cm, yshift=\shifty cm]o) {$\bullet$};}{\node[orange] at ([xshift = 4cm, yshift=\shifty cm]o) {$ \sqbullet$};}
\ifthenelse{\x = 4}{\node[green] at ([xshift = \shiftxb cm, yshift=\shiftyb cm]o) {$\bullet$};}{\node[orange] at ([xshift = \shiftxb cm, yshift=\shiftyb cm]o) {$ \sqbullet$};}
}

\node[orange] at ([xshift=4.75cm,yshift=0.75cm]A) {$\sqbullet$};

\foreach[evaluate=\x as \shiftx using 0.125*(8-\x)+4,
         evaluate=\y as \shifty using 0.125*(\x)] \x in {1,2,4}{
\ifthenelse{\x = 4}{\node[green] at ([xshift=\shiftx cm, yshift = 1cm]o) {$\bullet$};}{\node[orange] at ([xshift=\shiftx cm, yshift = 1cm]o) {$ \sqbullet$};}
\ifthenelse{\x = 4}{\node[green] at ([xshift = 5cm, yshift=\shifty cm]o) {$\bullet$};}{\node[orange] at ([xshift = 5cm, yshift=\shifty cm]o) {$ \sqbullet$};}
\ifthenelse{\x = 4}{\node[green] at ([xshift = \shiftx cm, yshift=\shifty cm]o) {$\bullet$};}
}

\foreach[evaluate=\x as \shiftx using 0.125*(\x)+4,
         evaluate=\y as \shifty using 0.125*(8-\x)+1,
         evaluate=\shiftyb using 0.125*(\x)+1,
         evaluate=\shiftxb using 0.125*(8-\x)+4] \x in {1,4}{
\ifthenelse{\x = 4}{\node[green] at ([xshift=\shiftx cm, yshift = 1cm]o) {$\bullet$};}{}
\ifthenelse{\x = 4}{\node[green] at ([xshift = 4cm, yshift=\shifty cm]o) {$\bullet$};}
{\node[orange] at ([xshift = 4cm, yshift=\shifty cm]o) {$ \sqbullet$};}
\ifthenelse{\x = 4}{\node[green] at ([xshift = \shiftxb cm, yshift=\shiftyb cm]o) {$\bullet$};}{\node[orange] at ([xshift = \shiftxb cm, yshift=\shiftyb cm]o) {$ \sqbullet$};}
}

\foreach[evaluate=\x as \shiftx using 0.125*(\x)+4,
         evaluate=\y as \shifty using 0.125*(8-\x)+1] \x in {4}{
\node[green] at ([xshift=\shiftx cm, yshift = 2 cm]o) {$\bullet$};
\node[green] at ([xshift = 5cm, yshift=\shifty cm]o) {$\bullet$};
\node[green] at ([xshift = \shiftx cm, yshift=\shifty cm]o) {$\bullet$};
}
\end{tikzpicture}
\caption{Examples of global CR-type DoFs chosen as modal basis functions ($\orange{\sqbullet}$) and linear nonconforming bubbles ($\green{\bullet}$) according to~\eqref{space:variable-degree-explicit}  with different degree distributions~$\pbf$.}
\label{figure:variable-degreee-dofs}
\end{figure}
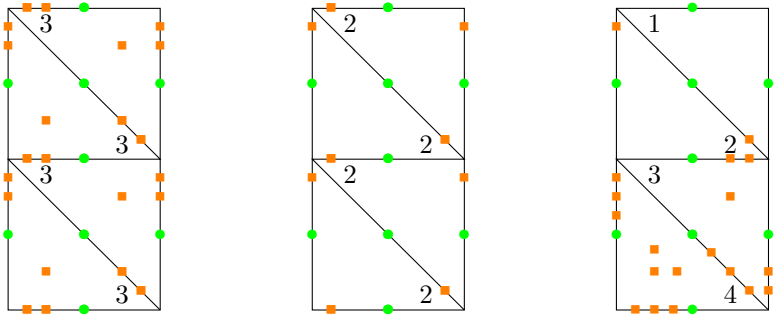

We define
\[ 
\Htildesnctaunom{1}{\pbf} := \left\{ \vn \in H^1(\taun,\Omega) \,\Big | \,
    (\jump{\vn}_F,\LjF)_{0,\F}=0 \quad
    \forall j = 0,\dots,\pF,\, 
    j \neq 1, \, \forall \F \in \FcalnI \right\}.
\]\normalsize
Given~$g$ in $L^1(\partial \Omega)$, we define
\[
\Vtildenexg  = \Htildesncgtaunom{1}{\pbf} \cap \Vnextilde,
\]
where
\footnotesize
\[ 
\Htildesncgtaunom{1}{\pbf}
:= \Big\{ v \in \Htildesnctaunom{1}{\pbf} \Big |
                (\jump{\vn},L^\F_j)_{0,\F} 
                = (g, L^\F_j)_{0,\F} \quad
                \forall j = 0,\dots,\pF, \; j \ne 1, \quad
                \forall \F\in\FcalnB  \Big\}.
\]
\normalsize
In what follows, we shall write~$\Vtildenzero$ instead of~$\Vtildenexg$.

\begin{remark} \label{remark:variable-order-implicit-VS-explicit}
$\Vnextilde$ is contained in
$\Htildesnctaunom{1}{\pbf}$;
the proof of this fact is essentially contained in the first part
of Lemmas~\ref{lemma:equivalent-definition-spaces:odd}
and~\ref{lemma:equivalent-definition-spaces:even}.
\end{remark}

\subsection{A variable degree CR method} \label{subsection:variable-method}
We define the operator~$\PiLo$
as the~$L^2(\F)$ projector onto the scaled Legendre polynomial
of order~$1$ over~$\F$
and the operator~$\PitildeLo$
as the~$L^2(\F)$ projector onto the span of
\[
\{L^\F_j \mid j=0,\dots, \pF-1,\, j \ne 1\}.
\]
We introduce the discrete bilinear form
$\atilden: [\Hone + \Vtilden]^2 \to \Rbb$
defined as follows:
for sufficiently large positive coefficients~$\etaF$
for all edges~$\F$,
see Proposition~\ref{proposition:convergence-variable-order} below,
\small\begin{equation} \label{anptilde}
\begin{split}
\antildeparent{\utilden}{\vtilden} 
& :=(\gradh \utilden, \gradh \vtilden)_{0,\Omega}
  - \sum_{\F \in \Fcaln}
    \left(\PiLo \nbfF \cdot \average{\gradh \utilden}, \PiLo \jump{\vtilden}\right)_{0,\F} \\
&\quad - \sum_{\F \in \Fcaln}
    \left( \PiLo \jump{\utilden}, \PiLo \nbfF \cdot \average{\gradh \vtilden}\right)_{0,\F}     
    +  \sum_{\F \in \Fcaln} \etaF
         \hF^{-1} \left(\PiLo \jump{\utilden}, \PiLo \jump{\vtilden}\right)_{0,\F}.
\end{split}
\end{equation}\normalsize
Introduce the mesh-dependent norm,
for which Remark~\ref{remark:simplified-bf-CR}
still applies,
\begin{equation} \label{variable-degree:norm}
\Normthreebarshp{\vtilden} ^2
:= \sum_{\E \in \taun} \Norm{\nabla \vtilden}^2_{0,\E} 
+ \sum_{\F \in \Fcaln}
    \hF^{-1} \Norm{\PiLo \jump{\vtilden}}^2_{0,\F}.
\end{equation}
We consider and analyze the following method:
\begin{equation} \label{method:variable-degree}
\begin{cases}
    \text{find } \utilden \in \Vtildenzero \text{ such that}\\ 
    \antildeparent{\utilden}{\vtilden}
    =    ( f,\vtilden)_{0,\Omega}
    \qquad\qquad  \forall \vtilden \in \Vtildenzero.
    \end{cases}
\end{equation}
The following well-posedness and convergence result 
for method~\eqref{method:variable-degree} holds true.

\begin{proposition} \label{proposition:convergence-variable-order}
If~$\etaF$ is sufficiently large for all~$\F$ in~$\Fcaln$,
then method~\eqref{method:variable-degree} is well-posed.
Given~$u$ and~$\utilden$ be the solutions to~\eqref{weak:form} and~\eqref{method:variable-degree}, we have
\begin{equation}\label{strang:on-tilde-variable-degree}
\Normthreebarshp{u-\utilden}
\lesssim
\inf_{\vtilden \in \Vtildenzero} \Normthreebarshp{u-\vtilden} 
+ \sup_{\vtilden \in \Vtildenzero} 
    \frac{|\antildeparent{u}{\vtilden}
    -( f,\vtilden)_{0,\Omega}|}{\Normthreebarshp{\vtilden}}.
\end{equation}
If~$u$ belongs to $H^{s} (\Omega)$,
$s$ larger than~$3/2$,
and~$\pE$ smaller than or equal to~$s$
for all $\E$ in~$\taun$,
we have
\begin{equation} \label{convergence:variable-degree-method}
\Normthreebarshp{u-\utilden}^2
\lesssim  \sum_{\E \in \taun} 
    (\hpE)^2 \SemiNorm{u}_{\pEpo,\E}^2.
\end{equation}
\end{proposition}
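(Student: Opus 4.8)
The plan is to follow the blueprint of the proofs of Propositions~\ref{proposition:standard-convergence} and~\ref{proposition:convergence-stabilized-method}: first establish coercivity (hence well-posedness) of the form~$\atilden$ in~\eqref{anptilde} with respect to the mesh-dependent norm~\eqref{variable-degree:norm}, then invoke the second Strang lemma to get~\eqref{strang:on-tilde-variable-degree}, and finally bound the best-approximation and consistency terms on its right-hand side separately. The structural observation I would isolate at the very start—because it drives everything—is that for every $\vtilden$ in~$\Vnextilde$ and every edge~$\F$ the jump $\jump{\vtilden}$ already belongs to $\Span\{L_1^\F\}$, so that $\PiLo\jump{\vtilden}=\jump{\vtilden}$ on~$\F$. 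Indeed, among the generators in~\eqref{space:variable-degree-explicit} the modal element bubbles vanish on element boundaries, each modal edge bubble $\phijF$ vanishes on the two edges of its element other than~$\F$, and the traces on~$\F$ of $\btildeoF$ and of the $\phijF$ are continuous across~$\F$ and cancel in the jump; the only surviving contributions come from the linear nonconforming bubbles attached to the remaining edges of the two elements sharing~$\F$, each of which restricts to~$\F$ as a scalar multiple of~$L_1^\F$. Crucially this holds regardless of the degree vector~$\pbf$, and $\PiLo$ projects onto a fixed one-dimensional space, so none of the constants below will depend on~$\pbf$.

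For well-posedness I would expand $\atilden(\vtilden,\vtilden)$ exactly as in~\eqref{atilde:coercivity} (with $\PiLo$ and $\etaF$ in place of $\PiLpmoF$ and $\eta$), estimate the cross term by the Cauchy--Schwarz inequality, the discrete trace inequality~\eqref{trace-inequality:discrete}, the bound $\hF\le\hE$ for $\E$ in~$\omegaF$, and the $L^2(\F)$-stability of~$\PiLo$, and then absorb it with Young's inequality. Because the constant produced in this step depends only on the shape-regularity parameter~$\shapereg$, there is a fixed threshold such that, whenever $\etaF$ exceeds it for every~$\F$, $\atilden$ is coercive with respect to~$\Normthreebarshp{\cdot}$; continuity of~$\atilden$ on $[\Hone+\Vtilden]^2$ is analogous, so Lax--Milgram gives the well-posedness of~\eqref{method:variable-degree} and the second Strang lemma gives~\eqref{strang:on-tilde-variable-degree}.

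For the consistency term I would use that $u$ in $H^1_0(\Omega)\cap H^s(\Omega)$ with $s>3/2$ forces $\jump{u}=0$ and $\average{\gradh u}=\nabla u$ on all edges; an elementwise integration by parts together with~\eqref{strong-pois} reduces $\antildeparent{u}{\vtilden}-(f,\vtilden)_{0,\Omega}$ to
\[
\sum_{\F\in\Fcaln}\bigl(\nbfF\cdot\nabla u,\jump{\vtilden}\bigr)_{0,\F}
-\sum_{\F\in\Fcaln}\bigl(\PiLo(\nbfF\cdot\nabla u),\PiLo\jump{\vtilden}\bigr)_{0,\F}.
\]
Invoking the opening observation ($\jump{\vtilden}=\PiLo\jump{\vtilden}$) and the self-adjointness of~$\PiLo$ in $L^2(\F)$, the two sums coincide termwise, so the consistency term vanishes identically—no extra $\h\p$-factor enters. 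For the best-approximation term I would restrict the infimum to conforming competitors, for which the jump contribution in~\eqref{variable-degree:norm} drops and $\Normthreebarshp{\cdot}$ reduces to $\SemiNorm{\cdot}_{1,\h;\Omega}$. Here I would argue, as in the proof of Lemma~\ref{lemma:unisolvence-variable-order}, that the linear nonconforming bubbles recover the whole space of continuous piecewise affine functions, whence $\Vtildenzero$ contains the conforming variable-degree finite element space with edge degrees~$\pF$ and homogeneous boundary data; picking $\vtilden$ to be a conforming $\h\p$-quasi-interpolant of~$u$ and applying elementwise polynomial approximation estimates~\cite{Brenner-Scott:2008} (valid under $\pE\le s$ for all~$\E$) yields $\inf_{\vtilden\in\Vtildenzero}\Normthreebarshp{u-\vtilden}^2\lesssim\sum_{\E\in\taun}(\hpE)^2\SemiNorm{u}_{\pEpo,\E}^2$. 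Combining this with~\eqref{strang:on-tilde-variable-degree} and the vanishing of the consistency term gives~\eqref{convergence:variable-degree-method}.

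The step I expect to require the most care is the best-approximation bound with element-wise $\hpE$ rates: it rests both on the fact that the explicit space~\eqref{space:variable-degree-explicit} contains a full conforming variable-degree subspace—which is exactly the reason for replacing the ``obvious'' construction of Section~\ref{subsection:variable-obvious-cannot-work} by one based on linear nonconforming bubbles—and on having at hand a conforming $\h\p$-quasi-interpolation operator with localized estimates. The coercivity step, by contrast, is routine once the $\pbf$-independence of the constants is noted, and the consistency term is, somewhat unusually, identically zero thanks to the one-dimensional jump structure.
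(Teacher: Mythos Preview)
Your overall structure matches the paper's: coercivity via the discrete trace inequality and Young, Strang's lemma, and then separate bounds on best-approximation and consistency. The genuine difference is in the consistency step. Using $\PiLo\jump{\vtilden}=\jump{\vtilden}$ and the self-adjointness of~$\PiLo$, you show that
\[
\sum_{\F\in\Fcaln}(\nbfF\cdot\nabla u,\jump{\vtilden})_{0,\F}
-\sum_{\F\in\Fcaln}(\PiLo(\nbfF\cdot\nabla u),\PiLo\jump{\vtilden})_{0,\F}=0,
\]
whereas the paper instead rewrites this expression as
$\sum_{\F}(\nbfF\cdot\nabla u-\PiltwoF{\pF-1}(\nbfF\cdot\nabla u),\jump{\vtilden}-\PiltwoF{0}\jump{\vtilden})_{0,\F}$
and then bounds it edge by edge with the trace-and-approximation machinery of Proposition~\ref{proposition:standard-convergence}. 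Your route is correct and strictly sharper; the paper's rewriting is unnecessary once one uses the one-dimensional jump structure you isolate at the start. The best-approximation argument (restrict to the conforming variable-degree subspace, which is indeed contained in~$\Vnextilde$ since the linear bubbles span the full lowest-order CR space and hence all hat functions) is essentially the same in both.

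One overclaim to correct: the coercivity threshold for~$\etaF$ is \emph{not} independent of~$\pbf$. After using $\|\PiLo(\nbfF\cdot\average{\gradh\vtilden})\|_{0,\F}\le\|\nbfF\cdot\average{\gradh\vtilden}\|_{0,\F}$, you still invoke the discrete trace inequality~\eqref{trace-inequality:discrete} on~$\gradh\vtilden$, and the constant there depends on the local polynomial degree (the $\lesssim$ convention in the paper explicitly allows this). The paper makes this explicit, noting that the hidden constant in~\eqref{strang:on-tilde-variable-degree} depends on~$\pbf$; accordingly, ``$\etaF$ sufficiently large'' should be read as an edgewise, degree-dependent condition rather than a single universal threshold. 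This does not affect~\eqref{convergence:variable-degree-method}, but the $\pbf$-independence remark should be dropped.
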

\begin{proof}
The bilinear form in~\eqref{anptilde}
is coercive with respect to the mesh-dependent norm~\eqref{variable-degree:norm}.
Similarly to what we did in the proof
of Proposition~\ref{proposition:convergence-stabilized-method},
this fact is a consequence of arguments
that are standard in variable order
symmetric interior penalty DG methods;
we refer to~\cite[Lemma~4.12]{DiPietro-Ern:2012}.
On the one hand this implies the well-posedness of
method~\eqref{method:variable-degree}
for sufficiently large~$\etaF$
for all~$\F$ in~$\Fcaln$.

On the other hand, inequality~\eqref{strang:on-tilde-variable-degree}
follows from the second Strang's lemma;
see, e.g., \cite{Strang:1972,Brenner:2015}.
The hidden constants therein depends on the discrete coercivity constant, which in turns depends on the collection of
the stabilization parameters~$\etaF$
and the polynomial degree distribution~$\pbf$.

In order to derive estimate~\eqref{convergence:variable-degree-method},
we show an upper bound for the two terms on the right-hand side
of~\eqref{strang:on-tilde-variable-degree} separately.
As for the first one, we observe that
\[
\begin{split}
& \inf_{\vtilden \in \Vtildenzero} (\sum_{\E \in \taun} \Norm{\nabla (u-\vtilden)}^2_{0,\E} 
+ \sum_{\F \in \Fcaln} \hF^{-1} \Norm{\PiLo \jump{\vtilden}}^2_{0,\F}) \\
& \lesssim  \inf_{\vtilden \in \Vtildenzero} (\sum_{\E \in \taun} \Norm{\nabla (u-\vtilden)}^2_{0,\E} 
+ \sum_{\F \in \Fcaln} \hF^{-1} 
        \Norm{\jump{\vtilden}}^2_{0,\F}).
\end{split}
\]
The best error in CR spaces is smaller than
the error in $H^1$ conforming spaces;
in particular the jump terms vanish.
Optimal convergence follows from
polynomial approximation results~\cite{Schwab:1998}.

As for the second term in~\eqref{strang:on-tilde-variable-degree},
we observe that
$\average{\nabla u}$, $\jump{u}$,
and~$\PiLo\jump{\vtilden}$
are equal to~$\nabla u$, $0$, and $\jump{\vtilden}$,
respectively, on all edges.
These identities, an integration by parts,
problem~\eqref{strong-pois},
and the consistency of the bilinear form~\eqref{anptilde} yield
\footnotesize\begin{equation*}
\begin{split}
& \antildeparent{u}{\vtilden} -(f,\vtilden)_{0,\Omega} 
 = \sum_{\F \in \Fcaln}
    \left(\nbfF \cdot \nabla u,
    \jump{\vtilden} -\PitildeLo \jump{\vtilden} \right)_{0,\F}  
    - \sum_{\F \in \Fcaln} 
    \left( \PiLo (\normal{\F} \cdot \nabla u ),
    \PiLo \jump{\vtilden} )_{0,\F} \right) \\
& = \sum_{\F \in \Fcaln}
    \left(\nbfF \cdot \nabla u,
    \jump{\vtilden} - \PiltwoF{\pF-1} \jump{\vtilden} \right)_{0,\F}
  = \sum_{\F \in \Fcaln}
    \left(\nbfF \cdot \nabla u - \PiltwoF{\pF-1} (\normal{\F} \cdot \nabla u ),
    \jump{\vtilden}-\PiltwoF{0}\jump{\vtilden}\right)_{0,\F}.
\end{split}            
\end{equation*}\normalsize
The assertion now follows along the same line
as in the proof of Proposition~\ref{proposition:standard-convergence}.
\end{proof}

\begin{remark} \label{remark:alternative-even-fixed}
A low hanging fruit of the proofs
of Lemma~\ref{lemma:unisolvence-variable-order}
and Proposition~\ref{proposition:convergence-variable-order}
is that the construction of fixed even degree CR elements
as in Section~\ref{section:CR-even}
can be easily replaced by using spaces
of modal functions as above
and \emph{linear} (or also cubic, quintic, \dots
instead of order $\p-1$) nonconforming bubble functions;
the corresponding method should contain correction terms
acting on the edge projection onto a single scaled linear
(or cubic, quintic, \dots) Legendre polynomial.
Of course, one may also wish to construct
odd degree CR elements based on using linear nonconforming
bubbles.
In this case, the correction term,
which is needed to recover convergence of order~$\p$
and not for well-posedness reasons,
should be included in the method
as well for it to work.
The price to pay would be the presence of a stabilization term;
the upside would be the availability of
nested bases in a $\p$-refinement procedure.
\end{remark}

\begin{remark} \label{remark:hp-version}
In the proof of the error estimates~\eqref{convergence:variable-degree-method},
one may also keep track of the explicit dependence
on the polynomial degree distribution~$\pbf$.
In particular, one may derive
\begin{itemize}
\item algebraic convergence for uniformly increasing~$\p$ and singular solutions;
\item exponential convergence with respect to uniformly increasing~$\p$
for analytic solutions;
\item exponential convergence with respect to the cubic root
of the number of degrees of freedom
for solutions with corner singularities,
using sequences of meshes that are
geometrically refined towards the corners
of the domain, and distributions of polynomial degrees
that are linearly increasing
with the distance from the corners.
\end{itemize}
While we postpone the discussion of a practical investigation of this aspect to Section~\ref{subsection:nr-p-hp} below,
we refer to \cite{Schwab:1998, Babuska-Suri:1987}
for the appropriate $\h\p$-approximation and convergence results.
\end{remark}

\subsection{Numerical investigation of the \texorpdfstring{$\h\p$}{hp}-version of the CR method} \label{subsection:nr-p-hp}
Here, we assess the numerical performance of method~\eqref{method:variable-degree}
for the approximation of corner singularities using $\h\p$-refinements.

\paragraph*{Test case.}
We consider the $L$-shaped domain
$\Omega:= (-1,1)^2\backslash [ (0,1) \times (-1,0) ]$
and the exact solution
\begin{equation} \label{u3}
u_3(r,\theta) = r^{\frac23} \sin \Big( \frac23 \theta \Big).
\end{equation}
The source term and the boundary data
are computed accordingly.

\paragraph*{Meshes and polynomial degree distributions.}
We consider sequences of
shape-regular locally quasi-uniform simplicial meshes~$\{ \taunn \}$
obtained via geometric refinements
at the re-entrant corner of the L-shaped domain;
the first three elements in the sequence
are depicted in Figure~\ref{figure:hp-refinements},
where the geometric grading factor is~$1/2$.
In particular, the decomposition of each mesh~$\taunn$
consists of~$n+1$ layers defined recursively as follows:
$\Lcalz$ is the set of elements abutting~$(0,0)$;
\[
\Lcali := \{ \Eo \in \taunn \, | \, 
\overline{\Eo} \cap \overline{\Et} \ne \emptyset,
\, \Et \in \Lcalimo, \, \Eo \notin \cup_{k=0}^{i-1}\Lcalk \}
\qquad\qquad\qquad
i = 1, \dots ,n.
\]
In order to obtain~$\taunnpo$ from~$\taunn$,
exclusively the elements belonging to the
layer~$\mathcal L^0$ are refined.
As for the polynomial distribution~$\pbf$,
we set
\begin{equation} \label{polynomial-degree-distribution}
\text{$\pE = i+1$ 
\qquad for all~$\E$ \; in \; $\Lcali$,
\qquad $i = 0, \dots, n$.}
\end{equation}

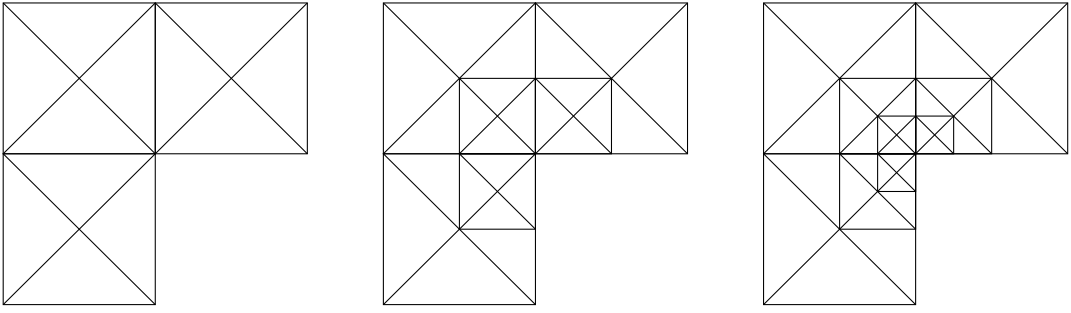
\begin{figure}[htpb]
\centering
\begin{tikzpicture}[scale=2]
\foreach[evaluate = \shift using 2.5*\i] \i in {0,1,2}{
\foreach[evaluate = \zeroup using 1-2^(2-\j)/4,
         evaluate = \twodown using 1+2^(2-\j)/4] \j in {0,...,\i}{
\draw [xshift=\shift cm] (1,1)  rectangle  (\twodown,\twodown);
\draw [xshift=\shift cm] (1,1)  rectangle  (\zeroup,\twodown);
\draw [xshift=\shift cm] (1,1)  rectangle  (\zeroup,\zeroup);
\draw [xshift=\shift cm] (\twodown,1) -- (1,\twodown) -- (\zeroup,1) -- (1,\zeroup);
\ifthenelse{\j = 0}{
\draw [xshift=\shift cm] (0,0) -- (2,2);
\draw [xshift=\shift cm] (0,2) -- (1,1);}}}
\end{tikzpicture}    
\caption{First three elements in a sequence
of geometrically graded meshes
(with grading parameter~$1/2$)
towards the re-entrant corner:
$\taunzn$ (left panel), $\taunon$ (center panel),
$\tauntw$ (right panel).}
\label{figure:hp-refinements}
\end{figure}

\paragraph*{Numerical result.}
In Figure~\ref{fig:variable-degree},
we display the results.

\begin{figure}[H]
\centering
\includegraphics[width=0.45\linewidth]{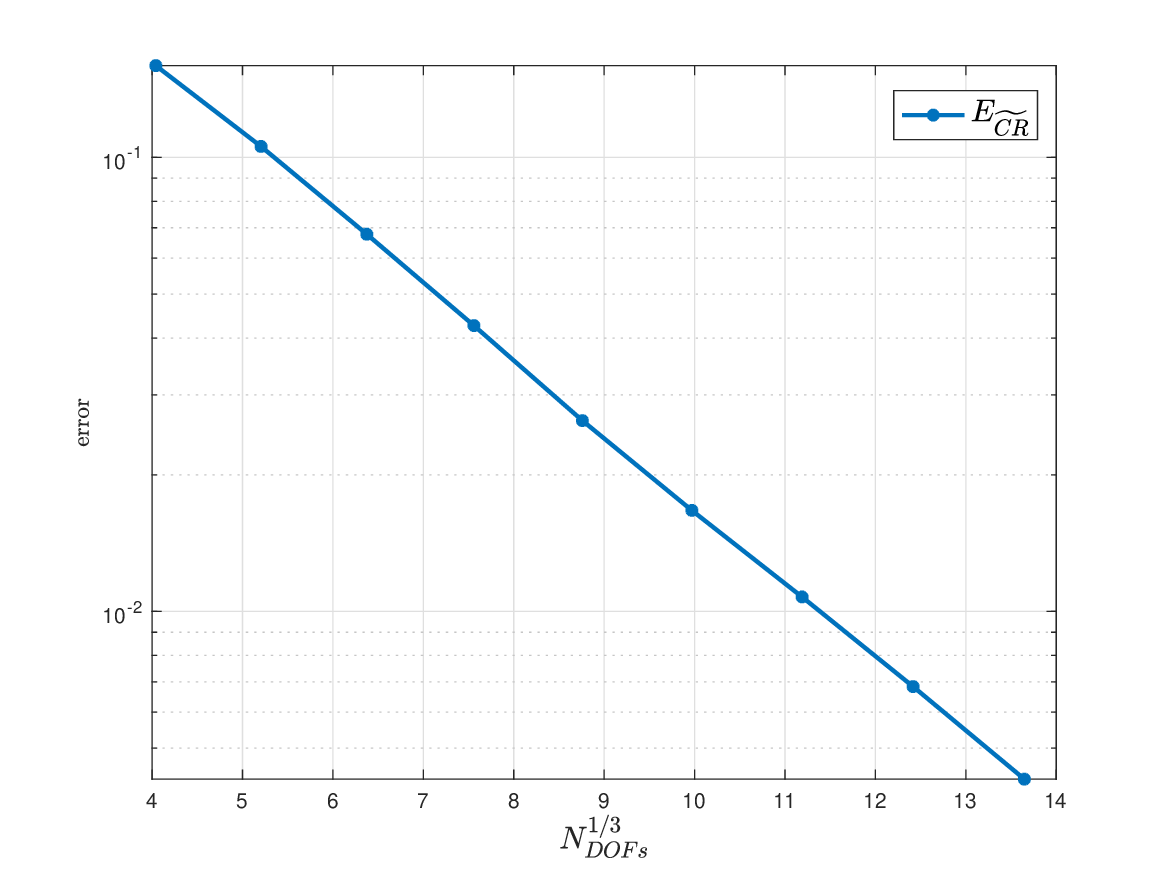} 
\caption{Exact solution~$u_3$ in~\eqref{u3};
we consider a sequence of geometrically graded meshes
(with grading parameter~$1/2$)
towards the re-entrant corner
as in Figure~\ref{figure:hp-refinements},
and linearly increasing polynomial degree distributions
as in~\eqref{polynomial-degree-distribution}.
We plot the CR error in~\eqref{error-measures}
versus the cubic root of the number of degrees of freedom.} \label{fig:variable-degree}
\end{figure}

Exponential convergence as discussed in Remark~\ref{remark:hp-version}
is observed.
A similar convergence is observed, and not reported,
for the $L^2$ error.

\section{New Crouzeix-Raviart elements
of even degree for the Stokes' equations} \label{section:inf-sup}
Crouzeix-Raviart elements were introduced~\cite{Crouzeix-Raviart:1973}
as an inf-sup stable, balanced,
and elementwise divergence free pair
for the discretization of the Stokes' problem.
Even though the main focus of this work
is on second order elliptic problems,
we are interested in designing
new even degree CR-type elements
and their practical stability.

For a given~$\fb$ in~$\Ltwod$, consider the Stokes' problem
\begin{equation} \label{Stokes-strong}
\begin{cases}
\text{find } (\ub,s) \text{ such that} \\
-\Deltab \ub  -\nabla s = \fb   & \text{in } \Omega \\
\dive \ub = 0                   & \text{in } \Omega \\
\ub = \zb                       & \text{on } \partial \Omega.
\end{cases}
\end{equation}
A weak formulation of~\eqref{Stokes-strong} reads as follows:
\begin{equation} \label{Stokes-weak}
\begin{cases}
\text{find } (\ub,s) \in \Honezd \times \Ltwoz =: \Vb \times Q \text{ such that} \\

(\nablab \ub, \nablab \vb){}_\zOmega + (\dive \vb, s)_\zOmega = (\fb , \vb)_\zOmega
                                   & \forall \vb \in \Vb \\
(\dive \ub, q)_\zOmega = 0         & \forall q \in Q. \\
\end{cases}
\end{equation}
The well-posedness and analysis of this problem is standard~\cite{Boffi-Brezzi-Fortin:2013}.

We consider vector-valued CR-type spaces~$\Vbftilden$ of order~$\p$,
see Section~\ref{subsection:new-CR-spaces},
and piecewise polynomials~$\Pbbpmotaun:=\Qn$ of order~$\p-1$
as approximation spaces to~$\Vb$ and~$Q$, respectively;
let~$\Bcaltilden$ be the vector version of the corresponding space as in~\eqref{method:with-BCs}.
We consider the following method,
which already incorporates
the imposition of the Dirichlet boundary condition
for the velocity
and the zero average condition for the pressure
as Lagrange multipliers:
\begin{equation} \label{stokes:CR-new}
\begin{cases}
\text{find } (\ubtilden,\bbftilden,\stilden, \lambdatilden) \in  \Vbftilden \times \Bcaltilden \times \Qtilden \times \Rbb \ \text{ such that} \\
\atilden(\ubtilden, \vbtilden){}_\zOmega
    + (\bbftilden,\vbtilden)_{0,\partial \Omega} 
    + (\stilden, \diveh \vbtilden)_\zOmega
    = (\fb , \vbtilden)_\zOmega
        & \forall \vbtilden \in \Vbftilden \\
(\ubtilden,\cbftilden)_{0,\partial \Omega}  = 0
        &  \forall \cbftilden \in \Bcaltilden \\
(\diveh \ubtilden, \qtilden)_\zOmega 
    + (\lambdatilden, \qtilden)_\zOmega 
    =  0         & \forall \qtilden \in \Qtilden \\
(\stilden,\mutilden)_\zOmega
    =  0         & \forall \mutilden \in \Rbb .
\end{cases}
\end{equation}
This method coincides with the standard CR method if~$\p$ is odd,
whereas for even~$\p$ it employs the novel CR-type space
as discussed in Section~\ref{subsection:new-CR-spaces}.

In matrix form, method~\eqref{stokes:CR-new} reads as follows (with obvious notation):
\[
\begin{pmatrix}
\Abfn     & \Cbfnstar & \Bbfnstar & \zerobf   \\
\Cbfn     & \zerobf   & \zerobf   & \zerobf   \\
\Bbfn     & \zerobf   & \zerobf   & \Dbfnstar \\
\zerobf   & \zerobf   & \Dbfn     & 0
\end{pmatrix}
\begin{pmatrix}
\ubfn \\
\bbfn \\
\sbfn \\
\lambda_n
\end{pmatrix}
=
\begin{pmatrix}
\fbfn \\
\zerobf \\
\zerobf \\
0
\end{pmatrix}
\]

\subsection{Numerical results: Stokes' equations} 
\label{nr:Stokes-convergence}
Here, we investigate the convergence of
method~\eqref{stokes:CR-new} on the numerical level.

\paragraph*{Error measures.}
In what follows,
$(\ubfn, s)$ and $(\ubtilden, \stilden)$
denote the solutions
to~\eqref{Stokes-weak} and~\eqref{stokes:CR-new}. 
We shall compute the following error measures:
\begin{equation} \label{error-measures:Stokes}
\EQtildeCRp :=  \frac{\Norm{s-\stilden}_{0,\Omega}}{\Norm{s}_{0,\Omega}}  ,
\qquad\qquad
\EVtildeCRp := \frac{\Normthreebars{\ub-\ubtilden}}{\SemiNorm{\ub}_{1,\Omega}} ,
\end{equation}
where~$\Normthreebars{\cdot}$ is
the vector-valued version of that
defined in~\eqref{discrete-norm}.

\paragraph*{Test case.}
We consider the polynomial degrees $\p=4$ and~$6$,
and the manufactured
solution~$\ub_4 = [(\ub_4)_1 ;(\ub_4)_2]$
and $s_4$ given by
\begin{equation} \label{u4}
\begin{split}
(\ub_4)_1(x,y)
& = -\cos \Big( \pi (x - \frac12) \Big)^2 \,
    \cos \Big( \pi (y - \frac12) \Big)^2 \,
    \sin \Big( \pi (y -\frac12) \Big)^2 ; \\
(\ub_4)_2(x,y)
& = \phantom{-} \cos \Big( \pi (x - \frac12) \Big)^2 \,
  \cos \Big( \pi (y - \frac12) \Big)^2 \,
  \sin \Big( \pi (x -\frac12) \Big)^2 ; \\
s_4(x,y)
& = \phantom{-} x+y-1.
\end{split}
\end{equation}
on the unit square $\Omega:=(0,1)^2$;
the loading term and boundary conditions are computed accordingly.
We consider the same sequence of meshes
employed in Section~\ref{subsection:conv-h-version}.

\paragraph*{Numerical results.}
In Figure~\ref{fig:Stokes},
we display the results.

\begin{figure}[H]
\centering
\includegraphics[width=0.37\linewidth]{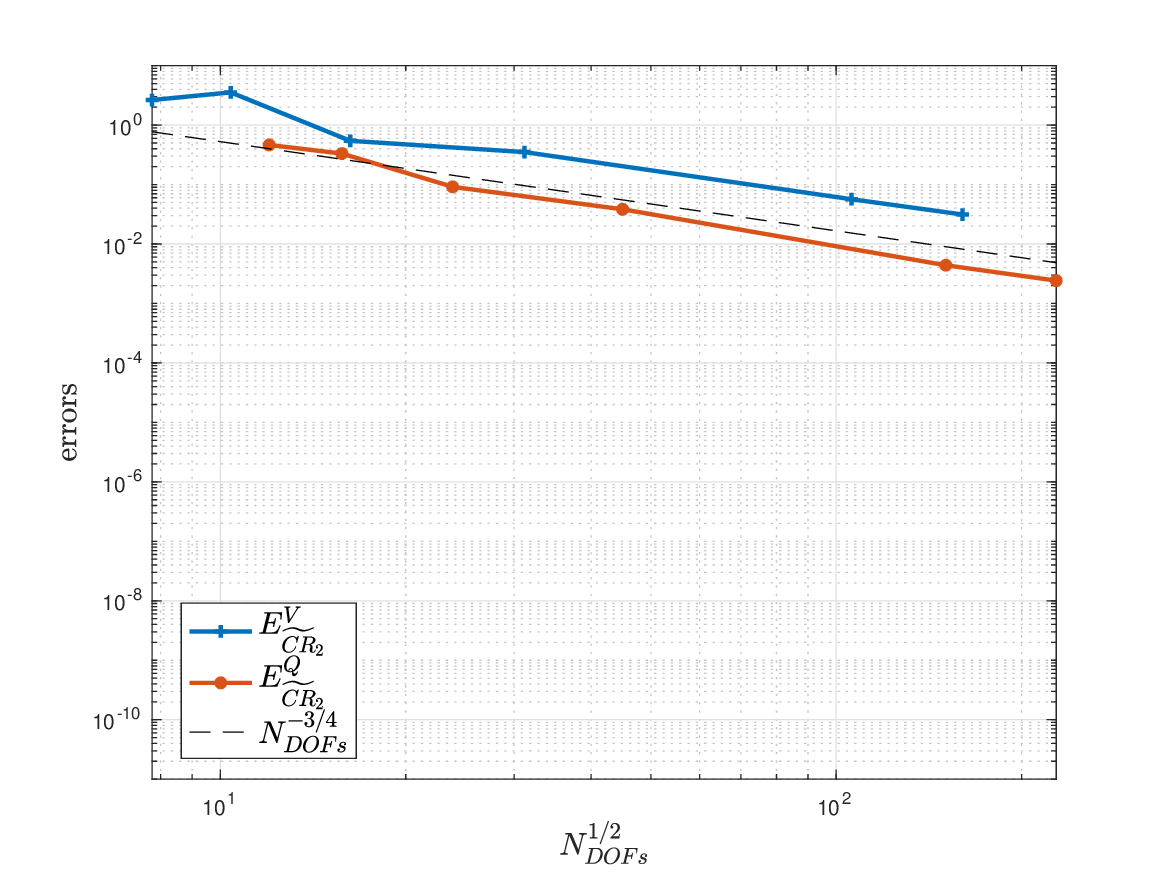}
\includegraphics[width=0.37\linewidth]{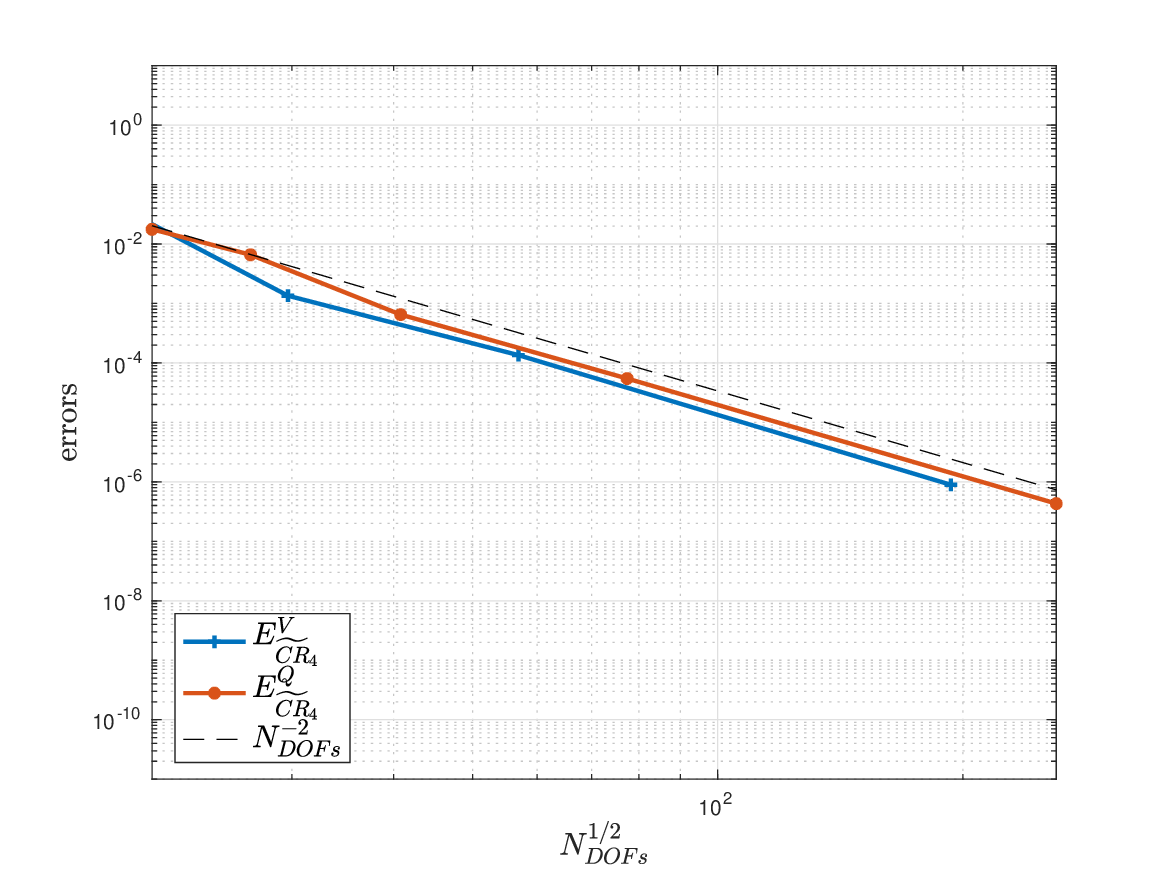}  \includegraphics[width=0.37\linewidth]{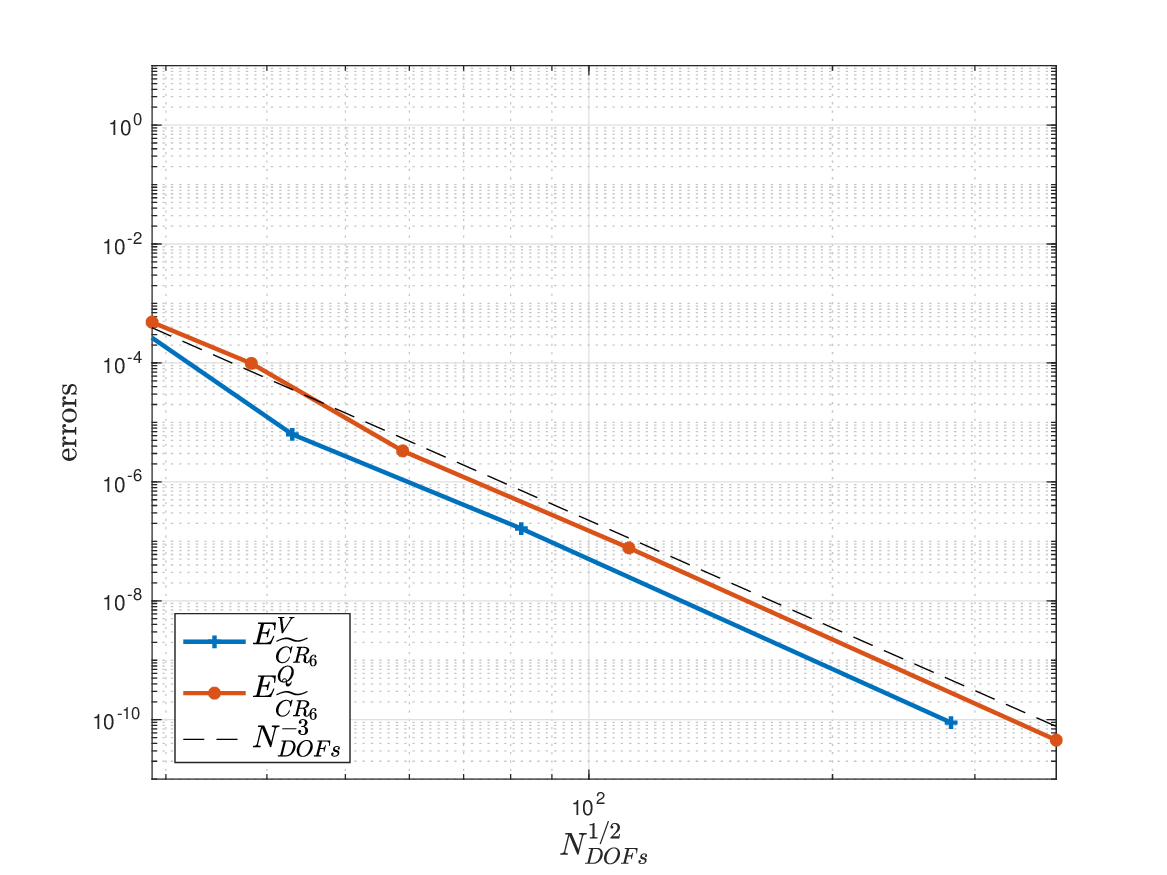}
\caption{Exact solution~$\ub_4$ and~$s_4$ in~\eqref{u4};
$\p=2$ (top-left panel);
$\p=4$ (top-right panel);
$\p=6$ (bottom panel);}
$\eta=20$.
We display the error measures in~\eqref{error-measures:Stokes}
under $\h$-refinements. \label{fig:Stokes}
\end{figure}

We observe convergence of order~$4$ and~$6$
for the methods of corresponding order;
as such, the method is inf-sup stable;
similar results are obtained for higher orders
but are not reported here.
On the other hand, we observe half an order suboptimal
convergence for the case~$\p=2$;
this and standard well-posedness results
for mixed problems suggest that the inf-sup constant
in this case is not robust with respect to the mesh
size and may be behaving like $\mathcal O(\h^\frac12)$.

We could have partially expected all these facts.
In fact, the proposed discrete pairs
lie in between the Scott-Vogelius ones \cite{scott-Vogelius:1985}
(which are stable for $\p$ larger than or equal to~$4$,
and meshes without singular vertices)
and that given by discontinuous velocities
and pressures~\cite{Baker-Jureidini-Karakashian:1990}:
the method for even degree larger than~$2$
must be inf-sup stable;
on the other hand, nothing could have been expected
a priori for the quadratic case,
since also the Scott-Vogelius element is not stable.

\section{Conclusions}
\label{section:conclusions}

We constructed new Crouzeix-Raviart (CR) elements
and corresponding methods
of even degree~$\p$ for a two dimensional Poisson problem:
such spaces are the span of modal functions
excluding those associated
with the vertices of the given mesh
and edge nonconforming bubble functions
of a suitable odd degree
thus mimicking what happens in the odd degree CR spaces;
the method required a DG-type stabilization
in order to deliver convergence rates of order~$\p$.

The reason for introducing such spaces was twofold:
\begin{itemize}
    \item for increasing polynomial order,
    we wanted to construct sequences of spaces
    spanned by functions with a ``hierarchical'' structure
    (or even nested bases);
    \item we wanted to establish the ground
    for a simple and optimal design of variable order CR elements.
\end{itemize}
The heart of the matter in the novel construction
is the observation that adding even degree
nonconforming bubble functions to the modal basis
yields a set of linearly dependent functions.
This is not the case while using odd degree
nonconforming bubbles,
the intrinsic reason being described
in displays \eqref{trick-odd-case}
and~\eqref{trick-even-case}.
The price to pay is the need of stabilizing
the method in order not to lose convergence.

Variable order CR elements were designed;
employing $\h\p$-refinements
yielded exponential convergence in terms
of the cubic root of the degrees of freedom
for solutions displaying a singular behaviour
at the corners of the physical domain.
Importantly,
\begin{itemize}
    \item for the construction of fixed even degree~$\p$
    CR elements, nonconforming bubbles
    of odd degree $1$ (or $3$, $5$, \ldots) can be employed;
    \item for the design of variable order
    CR elements,
    we use linear nonconforming bubbles so as to retain
    the linear consistency of the method;
    for this reason, we also had to stabilize the method
    on the edges of local odd degree
    larger than~$1$.
\end{itemize}
We further investigated whether the novel approach
could fit in an inf-sup stable
discretization of the Stokes' equations,
which preserves the elementwise
incompressibility constraint
on the discrete level.
This was in fact the case for even polynomial degrees
larger than~$2$; for the quadratic element,
we observed lack of inf-sup stability,
which resembles that for the quadratic
Scott-Vogelius element.

\paragraph*{Acknowledgments.}
The authors are grateful to the anonymous
referees whose comments allowed us
to improve a previous version of the manuscript.
A. Bressan has been partially funded by MUR (PRIN2022 research grant n. 2022A79M75).
L. Mascotto and M. Mosconi have been partially funded by MUR (PRIN2022 research grant n. 202292JW3F).
L. Mascotto has been partially funded by the
European Union (ERC, NEMESIS, project number 101115663);
views and opinions expressed are however those
of the authors only and do not necessarily reflect
those of the EU or the ERC Executive Agency.
The authors are also members of GNCS-INDAM.

{\footnotesize \bibliography{bibliogr}} \bibliographystyle{plain}

\begin{thebibliography}{10}

\bibitem{Ainsworth-Rankin:2008}
M.~Ainsworth and R.~Rankin.
\newblock Fully computable bounds for the error in nonconforming finite element
  approximations of arbitrary order on triangular elements.
\newblock {\em SIAM J. Numer. Anal.}, 46(6):3207--3232, 2008.

\bibitem{Ayuso-Manzini-Lipnikov:2016}
B.~Ayuso~de Dios, K.~Lipnikov, and G.~Manzini.
\newblock The nonconforming virtual element method.
\newblock {\em ESAIM Math. Model. Numer. Anal.}, 50(3):879--904, 2016.

\bibitem{Babuska-Suri:1987}
I.~Babu\v{s}ka and M.~Suri.
\newblock The {$h$}-{$p$} version of the finite element method with
  quasi-uniform meshes.
\newblock {\em RAIRO Mod\'el. Math. Anal. Num\'er.}, 21(2):199--238, 1987.

\bibitem{Baker-Jureidini-Karakashian:1990}
G.~A. Baker, W.~N. Jureidini, and O.~A. Karakashian.
\newblock Piecewise solenoidal vector fields and the {S}tokes problem.
\newblock {\em SIAM J. Numer. Anal.}, 27(6):1466--1485, 1990.

\bibitem{Baran-Stoyan:2007}
\'A. Baran and G.~Stoyan.
\newblock Gauss-{L}egendre elements: a stable, higher order non-conforming
  finite element family.
\newblock {\em Computing}, 79(1), 2007.

\bibitem{Beirao-Brezzi-Marini-Russo:2023}
L.~Beir\~ao~da Veiga, F.~Brezzi, L.~D. Marini, and A.~Russo.
\newblock The virtual element method.
\newblock {\em Acta Numer.}, 32:123--202, 2023.

\bibitem{Boffi-Brezzi-Fortin:2013}
D.~Boffi, F.~Brezzi, and M.~Fortin.
\newblock {\em Mixed {F}inite {E}lement {M}ethods and {A}pplications},
  volume~44 of {\em Springer Series in Computational Mathematics}.
\newblock Springer, Heidelberg, 2013.

\bibitem{Brenner:2003}
S.~C. Brenner.
\newblock Poincar\'e-{F}riedrichs inequalities for piecewise {$H^1$} functions.
\newblock {\em SIAM J. Numer. Anal.}, 41(1):306--324, 2003.

\bibitem{Brenner:2015}
S.~C. Brenner.
\newblock Forty years of the {C}rouzeix-{R}aviart element.
\newblock {\em Numer. Methods Partial Differential Equations}, 31(2):367--396,
  2015.

\bibitem{Brenner-Scott:2008}
S.~C. Brenner and L.~R. Scott.
\newblock {\em The {M}athematical {T}heory of {F}inite {E}lement {M}ethods},
  volume~15 of {\em Texts in Applied Mathematics}.
\newblock Springer, New York, third edition, 2008.

\bibitem{Brezzi-Marini:2021}
F.~Brezzi and L.~D. Marini.
\newblock Finite elements and virtual elements on classical meshes.
\newblock {\em Vietnam J. Math.}, 49(3):871--899, 2021.

\bibitem{Carstensen-Sauter:2.2022}
C.~Carstensen and S.~A. Sauter.
\newblock Critical functions and inf-sup stability of {C}rouzeix-{R}aviart
  elements.
\newblock {\em Comput. Math. Appl.}, 108:12--23, 2022.

\bibitem{Carstensen-Sauter:2022}
C.~Carstensen and S.~A. Sauter.
\newblock Crouzeix-{R}aviart triangular elements are inf-sup stable.
\newblock {\em Math. Comp.}, 91(337):2041--2057, 2022.

\bibitem{Cha-Lee-Lee:2000}
Y.~Cha, M.~Lee, and S.~Lee.
\newblock Stable nonconforming methods for the {S}tokes problem.
\newblock {\em Appl. Math. Comput.}, 114(2-3):155--174, 2000.

\bibitem{Ciarlet-Ciarlet-Sauter-Simian:2016}
Ph.~G. Ciarlet, P.~Ciarlet, S.~A. Sauter, and C.~Simian.
\newblock Intrinsic finite element methods for the computation of fluxes for
  {P}oisson's equation.
\newblock {\em Numer. Math.}, 132(3):433--462, 2016.

\bibitem{Crouzeix-Falk:1989}
M.~Crouzeix and R.~S. Falk.
\newblock Nonconforming finite elements for the {S}tokes problem.
\newblock {\em Math. Comp.}, 52(186):437--456, 1989.

\bibitem{Crouzeix-Raviart:1973}
M.~Crouzeix and P.-A. Raviart.
\newblock Conforming and nonconforming finite element methods for solving the
  stationary {S}tokes equations. {I}.
\newblock {\em Rev. Fran\c caise Automat. Informat. Recherche Op\'erationnelle
  S\'er. Rouge}, 7:33--75, 1973.

\bibitem{DiPietro-Ern:2012}
D.~A. Di~Pietro and A.~Ern.
\newblock {\em Mathematical aspects of discontinuous {G}alerkin methods},
  volume~69 of {\em Math\'ematiques \& Applications (Berlin)}.
\newblock Springer, Heidelberg, 2012.

\bibitem{DiPietro-Ern-Lemaire:2014}
D.~A. Di~Pietro, A.~Ern, and S.~Lemaire.
\newblock An arbitrary-order and compact-stencil discretization of diffusion on
  general meshes based on local reconstruction operators.
\newblock {\em Comput. Methods Appl. Math.}, 14(4):461--472, 2014.

\bibitem{DiPietro-Ern-Lemaire:2016}
D.~A. Di~Pietro, A.~Ern, and S.~Lemaire.
\newblock A review of hybrid high-order methods: formulations, computational
  aspects, comparison with other methods.
\newblock {\em Building bridges: connections and challenges in modern
  approaches to numerical partial differential equations}, pages 205--236,
  2016.

\bibitem{Ern-Guermond:2021-A}
A.~Ern and J.-L. Guermond.
\newblock {\em Finite elements {I}---{A}pproximation and interpolation},
  volume~72 of {\em Texts in Applied Mathematics}.
\newblock Springer, Cham, 2021.

\bibitem{Fortin-Soulie:1983}
M.~Fortin and M.~Soulie.
\newblock A nonconforming piecewise quadratic finite element on triangles.
\newblock {\em Internat. J. Numer. Methods Engrg.}, 19(4):505--520, 1983.

\bibitem{Meyers-Serrin:1964}
N.~G. Meyers and J.~Serrin.
\newblock {$H=W$}.
\newblock {\em Proc. Nat. Acad. Sci. U.S.A.}, 51, 1964.

\bibitem{Raviart-Thomas:1977}
P.-A. Raviart and J.~M. Thomas.
\newblock Primal hybrid finite element methods for 2nd order elliptic
  equations.
\newblock {\em Math. Comp.}, 31(138):391--413, 1977.

\bibitem{Sauter:2023}
S.~Sauter.
\newblock The inf-sup constant for {$hp$}-{C}rouzeix-{R}aviart triangular
  elements.
\newblock {\em Comput. Math. Appl.}, 149:49--70, 2023.

\bibitem{Sauter-Torres:2023}
S.~Sauter and C.~Torres.
\newblock On the inf-sup stability of {C}rouzeix-{R}aviart {S}tokes elements in
  3{D}.
\newblock {\em Math. Comp.}, 92(341), 2023.

\bibitem{Schwab:1998}
Ch. Schwab.
\newblock {\em {$p$}- and {$hp$}-Finite {E}lement {M}ethods}.
\newblock The Clarendon Press, Oxford University Press, New York, 1998.
\newblock Theory and applications in solid and fluid mechanics.

\bibitem{scott-Vogelius:1985}
L.~R. Scott and M.~Vogelius.
\newblock Norm estimates for a maximal right inverse of the divergence operator
  in spaces of piecewise polynomials.
\newblock {\em ESAIM Math. Model. Numer. Anal.}, 19(1):111--143, 1985.

\bibitem{Shen-Tang-Wang:2011}
J.~Shen, T.~Tang, and L.-L. Wang.
\newblock {\em Spectral {M}ethods}, volume~41 of {\em Springer {S}eries in
  {C}omputational {M}athematics}.
\newblock Springer, Heidelberg, 2011.
\newblock Algorithms, {A}nalysis and {A}pplications.

\bibitem{Stoyan-Baran:2006}
G.~Stoyan and \'A. Baran.
\newblock Crouzeix-{V}elte decompositions for higher-order finite elements.
\newblock {\em Comput. Math. Appl.}, 51(6-7):967--986, 2006.

\bibitem{Strang:1972}
G.~Strang.
\newblock Variational crimes in the finite element method.
\newblock In {\em The mathematical foundations of the finite element method
  with applications to partial differential equations ({P}roc. {S}ympos.,
  {U}niv. {M}aryland, {B}altimore, {M}d., 1972)}, pages 689--710. Academic
  Press, New York-London, 1972.

\bibitem{Szabo-Babuska:1991}
B.~Szab\'o and I.~Babu\v{s}ka.
\newblock {\em Finite {E}lement {A}nalysis}.
\newblock A Wiley-Interscience Publication. John Wiley \& Sons, Inc., New York,
  1991.

\end{thebibliography}

\end{document}